\newtheorem{thm}{Theorem}
\newtheorem{prop}[thm]{Proposition}
\newtheorem{lem}[thm]{Lemma}
\theoremstyle{definition}
\newcommand{\tors}{\mathrm{tors}}
\begin{document}

\title[]{ Growth of Torsion of Elliptic Curves with Full 2-torsion over Quadratic Cyclotomic Fields
}
\author{Burton Newman}
\address{Department of Computer Science\\ University of Southern California\\ Los Angeles, CA 90089\\ USA}
\email{bnewman@usc.edu}
\thanks{The author was partially supported by the NSF grant DMS-1302886.}

\keywords{Elliptic curves, torsion subgroups, quadratic twists}
\subjclass[2010]{Primary: 11G05, Secondary: 14G05}
\begin{abstract}
Let $K = \mathbb{Q}(\sqrt{-3})$ or $\mathbb{Q}(\sqrt{-1})$ and let $C_n$ denote the cyclic group of order n.  We study how the torsion part of an elliptic curve over $K$ grows in a quadratic extension of $K$.  
In the case $E(K)[2] \approx C_2 \oplus C_2$ we determine how a given torsion structure can grow in a quadratic extension and the maximum number of quadratic extensions in which it grows.  We also classify the torsion structures which occur as the quadratic twist of a given torsion structure.  

\end{abstract}
\maketitle

\section{Introduction}

Let $K$ be a number field.  If $E/K$ is an elliptic curve, let $E(K)_{tor}$ denote the torsion part of the group $E(K)$. 
Much research has been devoted to 
determining the finite abelian groups (up to isomorphism) which occur as 
$E(K)_{tor}$ for some elliptic curve $E/K$.  
The classification of the possible torsion structures
when $K=\mathbb{Q}$ was first completed by Mazur. 
Let $C_n$ denote the cyclic group of order $n$.

\begin{thm}{(Mazur \cite{Mazur78}, Kubert \cite{Kubert76})}\label{Mazur}
Let E be an elliptic curve over $\mathbb{Q}$.  Then the torsion subgroup $E(\mathbb{Q})_{tor}$ of $E(\mathbb{Q})$ is isomorphic to one of the following 15 groups:
\vspace{5mm}
\begin{itemize}
\item $C_n$ for $1 \leq n \leq 12, n \ne 11$\
\item $C_2 \oplus C_{2n}$ for $1 \leq n \leq 4$\\
\end{itemize}
\noindent Furthermore, each group above occurs infinitely often (up to isomorphism).
\end{thm}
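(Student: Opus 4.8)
This is Mazur's celebrated theorem (with the complements of Kubert), so rather than a self-contained argument I describe the structure of the known proof. The strategy is to translate the existence of a rational torsion point into a non-cuspidal rational point on a modular curve and then dispose of the resulting curves one genus-range at a time. Concretely, an elliptic curve $E/\Q$ with a rational point of exact order $N$ yields a non-cuspidal point of $Y_1(N)(\Q)$, and one with a rational subgroup isomorphic to $C_2\oplus C_{2n}$ yields a non-cuspidal point on the corresponding modular curve $X_1(2,2n)$; conversely such points produce the desired curves. A first, elementary reduction uses the Weil pairing: if $(\Z/m)^2\hookrightarrow E(\Q)$ then $\mu_m\subseteq\Q$, forcing $m\le 2$, so $E(\Q)_{tor}$ is necessarily of the form $C_N$ or $C_2\oplus C_{2n}$, and it remains only to bound $N$ and $n$ and to prove the ``occurs infinitely often'' clause.

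The genus-zero cases --- $N\in\{1,\dots,10,12\}$ and $C_2\oplus C_{2n}$ with $1\le n\le 4$ --- are essentially Kubert's part. Here the modular curve has genus $0$ and carries a rational cusp, hence is isomorphic to $\PP^1_\Q$; writing the universal elliptic curve over it in Tate normal form produces an explicit one-parameter family over $\Q(t)$ with the prescribed torsion and non-constant $j$-invariant. Specializing $t$ to the infinitely many rational values where the fibre stays smooth and the torsion does not grow realizes each such group infinitely often up to $\overline{\Q}$-isomorphism, which is the ``furthermore'' assertion.

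There remain finitely many borderline modular curves of positive genus that must be shown to have only cuspidal rational points: $X_1(11)$, $X_1(14)$, $X_1(15)$ of genus $1$, the genus-$2$ curves $X_1(13)$, $X_1(16)$, $X_1(18)$, the analogues attached to $C_2\oplus C_{10}$ and $C_2\oplus C_{12}$, and --- after reducing any larger composite or prime-power order to one of these or to a prime order --- a few further low-genus modular curves (e.g.\ for orders $20,21,24,25,27,35,49$, each of which one maps down to a modular curve of genus $\le 1$ whose rational points are already understood). For each one exhibits an explicit model over $\Q$, computes the Mordell--Weil group of the curve or of its Jacobian --- they all turn out to have rank $0$ --- and checks directly that every rational point is a cusp; the case $X_1(11)$ recovers the classical Billing--Mahler theorem, and $X_1(13)$ is the theorem of Mazur and Tate.

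The genuinely hard input, which is the content of Mazur's paper, is the infinite family of prime orders. A rational point of order $p$ with $p\ge 11$ gives a rational point on $X_0(p)$ whose underlying curve is non-CM (no CM elliptic curve over $\Q$ has a rational point of order $\ge 7$), so it suffices to control $X_0(p)(\Q)$ for $p\ge 17$, the primes $11$ and $13$ having been handled above. Mazur does this via the Eisenstein ideal $I$ of the Hecke algebra acting on $J_0(p)$: one shows the Eisenstein (equivalently winding) quotient of $J_0(p)$ has Mordell--Weil rank $0$ over $\Q$ --- the crucial input being nonvanishing of the relevant special value of the $L$-function, packaged as nonvanishing of the winding element --- and then runs a specialization argument, reducing a hypothetical extra rational point modulo an auxiliary prime $\ell\ne p$ and comparing it with the reductions of the cusps inside the Eisenstein quotient to reach a contradiction uniformly in $p$. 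Establishing the rank-zero statement for the Eisenstein quotient, and making the ensuing reduction argument work for every $p$ at once, is the step I expect to be by far the most delicate; combining it with the genus-$\le 2$ analysis above yields exactly the stated list.
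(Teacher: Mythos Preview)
The paper does not prove this theorem; it is stated as a known result with citations to Mazur and Kubert, and is used only as background for the classification results over quadratic cyclotomic fields that follow. There is therefore no proof in the paper to compare your proposal against.

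That said, your sketch is a broadly accurate outline of the classical argument: the Weil-pairing reduction to the shapes $C_N$ and $C_2\oplus C_{2n}$, the genus-zero parametrizations (Kubert) giving the infinitely-many-occurrences clause, the finite list of low-genus modular curves whose rational points must be shown cuspidal, and Mazur's Eisenstein-ideal analysis of $X_0(p)$ for primes $p$. As a summary of the literature this is fine; just be aware that in the context of this paper the theorem is an input, not something the author proves or reproves.
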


\noindent 

By contrast, when $K=\mathbb{Q}(\sqrt{5})$, there is only one elliptic curve over $K$ (up to isomorphism) with torsion part $C_{15}$ \cite{NajCubic}.
The work of Mazur was generalized by Kamienny et al. to the case of quadratic number fields.  

\begin{thm}{(Kamienny \cite{Kamienny}, Kenku and Momose \cite{Kenku&Momose}}) \label{Kamienny}
Let K be a quadratic field and E an elliptic curve over K.  Then $E(K)_{tor}$ is isomorphic to one of the following 26 groups:

\begin{itemize}

\item $C_n$ for $1 \leq n \leq 18, n \ne 17$\
\item $C_2 \oplus C_{2n}$ for $1 \leq n \leq 6$\
\item $C_3 \oplus C_{3n}$ for  $1 \leq n \leq 2$\ 
\item $C_4 \oplus C_{4}$\

\end{itemize}

\end{thm}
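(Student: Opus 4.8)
The plan is to adapt Mazur's analysis of the modular curves $X_1(N)$ over $\Q$ to quadratic fields by passing to symmetric squares. Write any torsion group as $C_m \oplus C_M$ with $m \mid M$. The first task is to bound the ``rank-two'' parameter $m$: the Weil pairing embeds $\mu_m$ into $K$, so $\varphi(m) \le 2$ and $m \in \{1,2,3,4,6\}$. To finish the rank-two part I would study the modular curves $X(3)$ and $X(4)$ — both of genus $0$, and visibly nonempty over $\Q(\sqrt{-3})$, resp.\ $\Q(i)$, which produces the structures $C_3 \oplus C_{3n}$ and $C_4 \oplus C_4$ — together with $X(6)$, for which a short genus and rational-point analysis shows that full $6$-torsion never occurs over a quadratic field. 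The mixed structures $C_m \oplus C_{mn}$ with $n > 1$ are then governed by points of order $mn$ on curves carrying a rational full-$m$ structure, i.e.\ by the modular curves $X_1(m,mn)$; thus everything is reduced to controlling the orders of cyclic torsion points.

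Thus the heart of the matter is: \emph{what is the largest $N$ admitting a point of order $N$ on an elliptic curve over a quadratic field?} A non-cuspidal quadratic point of $X_1(N)$ yields a $\Q$-rational point of the symmetric square $X_1(N)^{(2)}$ lying off the image of the cusps. When $X_1(N)$ has genus $\ge 2$, fix a rational cusp and use it to embed $X_1(N)^{(2)} \hookrightarrow J_1(N)$. Choose a quotient $J$ of $J_1(N)$ — an Eisenstein- or winding-type quotient — for which $J(\Q)$ is finite; this finiteness is forced by bounding the Mordell–Weil group through special values of $L$-functions (the Eisenstein ideal for prime level, its analogues otherwise). Kamienny's criterion then applies: if the composite $X_1(N)^{(2)} \to J$ is a formal immersion along the reduction of the cuspidal locus at a suitable auxiliary prime $\ell$, then every quadratic point of $X_1(N)$ is cuspidal. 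Applied uniformly, this already bounds $N$; it then remains to treat the finitely many $N$ below the bound. For primes, $N = 11, 13$ do contribute ($X_1(13)$ is hyperelliptic of genus $2$, hence has infinitely many quadratic points), while $N = 17$ and every prime $N \ge 19$ are excluded. For composite $N$ one checks that $X_1(14), X_1(15)$ (genus $1$) and $X_1(16), X_1(18)$ (hyperelliptic of genus $2$) contribute, whereas all larger composite levels are ruled out by the same mechanism, using that an odd-order point $P \in E(K)$ has its $\Gal(\overline{K}/\Q)$-conjugate $P^{\sigma}$ in $E(K)$ as well, so $\langle P\rangle + \langle P^{\sigma}\rangle$ is a $\Q$-rational subgroup that feeds into the level-$N$ analysis.

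The final input is the structural refinement: for each surviving $N$ one must show the quadratic points realize exactly the claimed group and nothing larger, via the Weil pairing, the Galois action on $E[N]$, and quadratic-twist arguments, together with the genus-$0$ and genus-$1$ curves $X_1(m,mn)$ for the mixed cases. I expect the main obstacle to be verifying the formal-immersion hypothesis in Kamienny's criterion at the borderline prime levels and their composite multiples: this needs both the arithmetic input that the chosen quotient of $J_1(N)$ has finite Mordell–Weil group over $\Q$, and an explicit $q$-expansion computation showing that the relevant $2 \times 2$ matrix of Hecke eigenvalues — equivalently, the Abel–Jacobi images of pairs of cusps — is nondegenerate modulo $\ell$. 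The delicate bookkeeping that converts ``no point of order $N$ for large $N$'' into the precise list of $26$ groups, especially disentangling the composite levels $14, 15, 16, 18$ and the mixed structures $C_2\oplus C_{2n}$ and $C_3\oplus C_{3n}$, is the part carried out in detail by Kenku and Momose.
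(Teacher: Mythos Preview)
The paper does not prove this theorem at all: it is stated as background, with citations to Kamienny and to Kenku--Momose, and no argument is given. So there is no ``paper's own proof'' to compare your proposal against.

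That said, your sketch is a faithful outline of the actual strategy in the cited references: Kamienny's method of passing to the symmetric square $X_1(N)^{(2)}$, mapping to a finite-Mordell--Weil quotient of $J_1(N)$, and verifying a formal-immersion criterion at a good auxiliary prime to force quadratic points to be cuspidal; followed by Kenku--Momose's case analysis of the borderline levels and mixed structures $X_1(m,mn)$. The Weil-pairing bound $\varphi(m)\le 2$ for the rank-two parameter is correct, as is your identification of the genus-$2$ hyperelliptic curves $X_1(13), X_1(16), X_1(18)$ as the source of the ``new'' quadratic torsion. One minor correction: your claim that an odd-order point $P\in E(K)$ has its Galois conjugate $P^{\sigma}$ also in $E(K)$ is not literally true --- $P^{\sigma}$ lies on the conjugate curve $E^{\sigma}$, not on $E$ --- though this is harmless for the argument since one works with the $\Q$-rational pair $\{(E,P),(E^{\sigma},P^{\sigma})\}$ as a point of $X_1(N)^{(2)}(\Q)$. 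As you correctly anticipate, the genuine work lies in the formal-immersion verification and in the level-by-level bookkeeping, neither of which you have actually carried out; what you have written is a plan, not a proof.
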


\noindent Note that the above result classifies the possible torsion structures as one varies over all quadratic number fields.  Hence for a given quadratic field $K$, it is in general still unknown which torsion structures occur over $K$ among elliptic curves $E/K$.  Two cases in which the answer is known are $K = \mathbb{Q}(\sqrt{d})$, $d=-1,-3$:

\begin{thm}({Najman \cite{NajCyc}})
\label{Najman}
Let K be a cyclotomic quadratic field and E an elliptic curve over K.  
\begin{itemize}

\item If  $K = \mathbb{Q}(i)$ then $E(K)_{tor}$ is either one of the groups from Mazur's theorem or $C_4 \oplus C_{4}$.
 
\item  If  $K = \mathbb{Q}(\sqrt{-3})$ then $E(K)_{tor}$ is either one of the groups from Mazur's theorem, $C_3 \oplus C_{3}$ or $C_3 \oplus C_{6}$.

\end{itemize}
\end{thm}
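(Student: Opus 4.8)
The plan is to take the Kamienny--Kenku--Momose classification (\cref{Kamienny}) as a starting point and, for each torsion group on that list that is \emph{not} already on Mazur's list (\cref{Mazur}), decide whether it is realised by an elliptic curve over $\Q(i)$ and over $\Q(\sqrt{-3})$. The groups to be treated are $C_n$ for $n\in\{11,13,14,15,16,18\}$, together with $C_2\oplus C_{10}$, $C_2\oplus C_{12}$, $C_3\oplus C_3$, $C_3\oplus C_6$ and $C_4\oplus C_4$; the groups of \cref{Mazur} themselves all persist over $K$ (infinitely many of the curves realising each over $\Q$ keep the same torsion over $K$), so nothing is lost there.

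First I would eliminate the groups with non-cyclic prime part by the Weil pairing: if $E[m]\subseteq E(K)$ then $\mu_m\subseteq K$. Since $\Q(i)=\Q(\mu_4)$ contains $\mu_4$ but not $\mu_3$, while $\Q(\sqrt{-3})=\Q(\mu_3)$ contains $\mu_3$ (hence $\mu_6$) but not $\mu_4$, this rules out $C_3\oplus C_3$ and $C_3\oplus C_6$ over $\Q(i)$ and $C_4\oplus C_4$ over $\Q(\sqrt{-3})$. For the complementary existence statements I would produce one explicit curve in each remaining case: a curve with rational $6$-torsion whose remaining $3$-torsion is defined over $\Q(\sqrt{-3})$ realises $C_3\oplus C_6$ there, and similarly one obtains $C_3\oplus C_3$ over $\Q(\sqrt{-3})$ and $C_4\oplus C_4$ over $\Q(i)$; equivalently, $X(3)$ and $X(4)$ acquire rational cusps and hence become $\mathbb{P}^1$ over $\Q(\mu_3)$ and $\Q(\mu_4)$, giving infinitely many such curves.

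The bulk of the work is showing that the six cyclic groups and $C_2\oplus C_{10}$, $C_2\oplus C_{12}$ occur over neither field, via the modular curves $X_1(n)$ and $X_1(2,2n)$. When $n\in\{11,14,15\}$, $X_1(n)$ has genus one and, with a cusp as origin, is an elliptic curve $E_n/\Q$ of rank zero; since $\rk E_n(K)=\rk E_n(\Q)+\rk E_n^{(d)}(\Q)$ for $K=\Q(\sqrt d)$, it suffices to check that the quadratic twists $E_n^{(-1)}$, $E_n^{(-3)}$ have rank zero, after which $X_1(n)(K)$ is finite and a direct enumeration shows every point is a cusp. When $n\in\{13,16,18\}$, $X_1(n)$ has genus two; I would run the same argument on the Jacobian, using $\rk J_1(n)(K)=\rk J_1(n)(\Q)+\rk J_1(n)^{(d)}(\Q)$ and an isogeny decomposition of $J_1(n)$ into smaller pieces where available, to conclude that $J_1(n)(K)$, and hence $X_1(n)(K)\hookrightarrow J_1(n)(K)$, is finite, then enumerate. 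Finally, for $C_2\oplus C_{10}$ and $C_2\oplus C_{12}$ I would apply the same rank-plus-enumeration method to $X_1(2,10)$ and $X_1(2,12)$, or alternatively parametrise the rational families of elliptic curves with a $\Q$-point of order $10$ or $12$ and impose that the field of definition of the remaining $2$-torsion is $\Q(i)$ or $\Q(\sqrt{-3})$, checking that this forces a cusp or contradicts \cref{Kamienny}.

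I expect the genus-two cases to be the main obstacle: certifying $\rk J_1(n)(\Q(\sqrt d))=0$ for $n\in\{13,16,18\}$ and $d\in\{-1,-3\}$ requires descent on abelian surfaces (or on their isogeny factors) rather than routine elliptic-curve rank bounds, and one must also verify that the finitely many $K$-points of $X_1(n)$ so produced are all cusps, so that no genuine elliptic curve over $K$ carries the corresponding cyclic torsion.
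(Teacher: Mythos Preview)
The paper does not prove this theorem; it is quoted as a background result of Najman \cite{NajCyc} (completed in \cite{NajCyc2}) and used throughout without argument. So there is no ``paper's own proof'' to compare against.

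That said, your sketch is essentially the strategy Najman carries out. He starts from \cref{Kamienny}, disposes of the wrong non-cyclic cases via the Weil pairing exactly as you do, and then for each remaining group shows that the relevant modular curve $X_1(m,mn)$ has only cusps among its $K$-rational points. The genus-one cases are handled by rank computations on the twists, and the genus-two cases by working with the Jacobian. Your identification of the genus-two Jacobian rank computations as the main obstacle is accurate; in practice Najman handles these by a mix of $2$-descent on the Jacobian and reduction arguments, and the cusp-only verification is then a finite check. One small point: for $X_1(2,10)$ and $X_1(2,12)$ you should work with the modular curves directly rather than the parametrisation-plus-$2$-torsion alternative you mention, since the latter only captures curves with a rational point of order $10$ or $12$, not all curves over $K$ with that torsion structure.
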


\noindent For cubic and quartic number fields, we still do not have a complete classification analogous to  \cref{Kamienny}. We do know the torsion structures that occur infinitely often (up to isomorphism) \cites{Jeon04,Jeon06}.  We also know the possible prime divisors of the order of the torsion over extensions of degree $n \leq 6$
\cites{Parent00,Parent03,Stoll} (and unpublished work of Derickx, Kamienny, Stein and Stoll).
Furthermore, for cubic fields, we have a classification of possible torsion structures if we restrict to elliptic curves defined over $\mathbb{Q}$ \cite{NajCubic}. We also have a classification of torsion structures over the compositum of all quadratic fields for elliptic curves defined over $\mathbb{Q}$ \cite{Fujita} as well as significant progress over the compositum of all cubic fields \cite{Daniels}. \cref{Kamienny} provided evidence  for the Strong Uniform Boundedness conjecture, which was confirmed by Merel:

\begin{thm}({Merel \cite{Merel}}) \label{Merel}
For every positive integer d there exists B$_d > 0$ such that for every
number field K with $[K:\mathbb{Q}] = d$  and every elliptic curve E/K we have 

$$|E(K)_{tor}| \leq B_d$$

\end{thm}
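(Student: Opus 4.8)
The approach I would take is the ``Eisenstein ideal / winding quotient'' method: Mazur's argument for $K=\Q$, its extension to quadratic $K$ by Kamienny, and the uniform-in-degree version due to Merel. The first move is to reduce to \emph{prime-power torsion}. A $K$-rational point of order $p^n$ on $E$, where $[K:\Q]=d$, yields a closed point of degree $\le d$ on the modular curve $X_1(p^n)$, so it suffices to bound, for each $d$, those prime powers $p^n$ for which $X_1(p^n)$ has a non-cuspidal closed point of degree $\le d$ over $\Q$. Once this set is finite, $E(K)_{tor}$ has bounded exponent and its order involves boundedly many primes, so $|E(K)_{tor}|$ is bounded. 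Since the case $N=p$ already carries the full difficulty, I would concentrate on producing a function $M$ such that $X_1(p)$ has no non-cuspidal point of degree $\le d$ over $\Q$ once $p>M(d)$.

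Suppose such a point exists for some large $p$; summing its $\Gal(\Qbar/\Q)$-conjugates gives a $\Q$-rational effective divisor $x$ of degree $d'\le d$ on $X_1(p)$, and after replacing $d$ by $d'$ we may take $\deg x=d$, so that $x\in X_1(p)^{(d)}(\Q)$. Now I would introduce the winding quotient: let $e\in H_1(X_1(p)(\C),\Q)$ be the winding element (the class of a path from the cusp $0$ to the cusp $\infty$), let $I_e\subseteq\mathbb{T}$ be its annihilator in the Hecke algebra, and set $J_e=J_1(p)/I_e J_1(p)$. Because $e$ is rational and, up to a nonzero constant, pairs with a newform $f$ to give $L(f,1)$, the quotient $J_e$ is isogenous to a product of $\Q$-simple abelian varieties each of analytic rank zero; hence by Kolyvagin--Logachev (building on Gross--Zagier) the group $J_e(\Q)$ is \emph{finite}. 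Composing the Abel--Jacobi map based at $\infty$ with the projection $J_1(p)\to J_e$ gives a morphism $\varphi\colon X_1(p)^{(d)}\to J_e$ carrying $X_1(p)^{(d)}(\Q)$ into this finite set; in particular $\varphi(x)$ and $\varphi(d\cdot\infty)$ both lie there.

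The decisive step is a \emph{formal immersion} argument of Mazur--Kamienny type. I would pick an auxiliary prime $\ell\ne p$ of good reduction and try to prove that $\varphi$, reduced modulo $\ell$, is a formal immersion at the cuspidal point $d\cdot\infty$. Granting this, the two $\Q$-points $x$ and $d\cdot\infty$, lying in the finite set $J_e(\Q)$ and hence reducing compatibly modulo $\ell$, would be forced to satisfy $x\equiv d\cdot\infty\pmod\ell$ and then $x=d\cdot\infty$, contradicting that $x$ is non-cuspidal (for $p\ge5$ a genuine pair $(E,P)$ with $P$ of exact order $p$ never lies over a cusp). By Kamienny's criterion the formal-immersion condition becomes concrete: it is equivalent to the linear independence over $\F_\ell$ of the mod-$\ell$ reductions of $T_1 e,T_2 e,\dots,T_d e$ in the homology of $X_1(p)$, i.e. to the non-vanishing modulo $\ell$ of a certain $d\times d$ determinant formed from the pairings $\langle e,T_n\rangle$ of the winding element against Hecke operators.

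This last non-vanishing is the main obstacle, and it is precisely where Merel's work lies; everything up to here is ``soft'' geometry together with the analytic input of Kolyvagin--Logachev. Using Manin's presentation of the relevant homology one writes $\langle e,T_n\rangle$ as an explicit finite arithmetic sum of Dedekind-sum type (combinations of quantities like $\sum_k k\lfloor kn/p\rfloor$), and the problem reduces to showing that the resulting $d\times d$ matrix is non-singular, and moreover to doing this \emph{effectively} and \emph{uniformly in $d$}, so that one may take (say) $M(d)$ of the shape $d^{3d^2}$ --- the bound later sharpened by Oesterl\'e, and extended to prime powers by Parent. With that estimate in hand, assembling the steps above yields the constant $B_d$ and proves the theorem.
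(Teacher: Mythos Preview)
Your outline is a faithful sketch of Merel's actual argument (winding quotient, finiteness of $J_e(\Q)$ via Kolyvagin--Logach\"ev, Mazur--Kamienny formal immersion, and the Hecke-determinant estimate), but there is nothing to compare it against here: the paper does not prove \cref{Merel}. It is stated with attribution to \cite{Merel} and used only as a black box (specifically to guarantee that torsion can grow in at most finitely many quadratic extensions). So your proposal is not wrong, but it goes far beyond what this paper does; for the paper's purposes you need only cite Merel's result.
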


With the above classification results in hand, researchers began 
to study how the torsion part of an elliptic curve grows in a quadratic extension of the base field.  
As we will describe in Section 2, if $L$ is a quadratic extension of $K$ and $E/K$ is an elliptic curve, there is a bound on $|E(L)_{tor}|$ which depends only on torsion structures that occur over $K$.  Furthermore, as a consequence of \cref{Merel} and \cref{torsionoverc} one can show that for any number field $K$, E(K)$_{tor}$ grows in a finite number of quadratic extensions  \cite{NajLarge}*{Thm 12}.

The case of quadratic growth when $K=\mathbb{Q}$ has already been studied by various authors.  Previously Kwon completed the case in which $E(\mathbb{Q})[2] = C_2 \oplus C_{2}$ \cite{Kwon}. 
In order to classify growth, Kwon studied the the distribution of torsion structures among the quadratic twists of an elliptic curve. The \textit{d-quadratic twist} of $E: y^2=x^3+ax+b$ is the elliptic curve with model $E^d: dy^2=x^3+ax+b$.
Kwon's result was extended by Tornero et al. to complete the classification over $\mathbb{Q}$  \cite{Tornero}.  On the other hand, they obtained non-optimal upper bounds on the number of extensions in which a given torsion structure can grow.  Najman finally determined the best possible bounds for each torsion structure over $\mathbb{Q}$ \cite{NajLarge}. In particular, no curve over $\mathbb{Q}$ can grow in more than 4 quadratic extensions.

Our research is a first step toward classifying growth of torsion in quadratic extensions of base fields larger than $\mathbb{Q}$.
We choose to work over the base fields $\mathbb{Q}(\sqrt{-3})$ and $\mathbb{Q}(\sqrt{-1})$, as these are the only two number fields besides $\mathbb{Q}$ for which a classification of torsion structures is known.
Let $K = \mathbb{Q}(\sqrt{-3})$ or $\mathbb{Q}(\sqrt{-1})$.
In the case $E(K)[2]=C_2 \oplus C_2$, we generalize Kwon's work, determining (i) a classification of the torsion structures  which occur as the quadratic twists of a given torsion structure (\cref{twist})
(ii) a classification of the torsion structures which occur as the growths of a given torsion structure (\cref{main-1}, \cref{main-3})
(iii) Tight bounds on the number of quadratic extensions in which a given torsion structure can grow (\cref{main-1}, \cref{main-3}).
Kwon's methods relied upon the fact that $\mathbb{Z}$ was a UFD and had  finite unit group.  Our methods don't depend explicitly on these properties and hence generalize more easily to arbitrary number fields. 
In order to seek out examples of growth (vi) we used division polynomials to write a program in Magma which on input an elliptic curve $E/K$, outputs a list of quadratic extensions of K where the torsion grows and the torsion in each case.

In order to describe our results, we introduce the following notation.
As in \cite{Tornero},
let $\Phi_K(d)$ be the set of possible groups that can appear as the torsion subgroup of an elliptic curve $E/K$ over a degree $d$ extension of $K$. 
Given $G \in \Phi_K(1)$, write $\Phi_K(d,G)$ for the set of possible groups that can appear as the torsion subgroup over a degree $d$ extension $L/K$
, of an elliptic curve $E/K$ with $E(K)_{\tors} \approx G$.
Given $G \in \Phi_K(1)$, let $T_K(G)$ denote set of finite abelian groups H (up to isomorphism) such that there is an elliptic curve $E/K$ with $E(K)_{tor} \approx G$ and $E^d(K)_{tor} \approx H$ for some $d \in K$.  
If $E/K$ is an elliptic curve,  let $g(E)$ denote the number of quadratic extensions $L$ of $K$ such that $E(L)_{tor} \not = E(K)_{tor}$.  If $G$ is a finite abelian group, let $g(G)$ denote the set of positive integers $m$ such that $g(E) = m$ for some elliptic curve $E/K$ with $E(K)_{tor} \approx G$. 
A summary of our main results  appear in \cref{tableofresults} and \cref{tableofresults2}.  For a more detailed description of growth of torsion see \cref{main-1} and \cref{main-3} and for a description of quadratic twists see \cref{twist}.

\begin{table}

 \caption{ Summary of Main Results: Growth of Torsion}

\begin{center}

\resizebox{\linewidth}{!}{
\begin{tabular}{ |c|c|c|c|c| } 
 \hline
  \label{tableofresults}

  & \multicolumn{2}{|c|}{$K=\mathbb{Q}(\sqrt{-3})$} & \multicolumn{2}{|c|}{$K=\mathbb{Q}(\sqrt{-1})$} \\
 \hline 
 
 $G$ & $\Phi_K(2,G) \setminus G$  & $ g_K(G)$ & $\Phi_K(2,G) \setminus G$ &  $g_K(G)$ \\ 
 \hline

 $C_2 \oplus C_8$  & $\emptyset$  & 0 & $C_4 \oplus C_8$  & 0,1 \\ 
 \hline
 
 $C_2 \oplus C_6$  & $C_2 \oplus C_{12}$  & 0,1 &  $C_2 \oplus C_{12}$&0,2 \\ 
 \hline
 
 $C_2 \oplus C_4$  & $C_4 \oplus C_4$,$C_2 \oplus C_8$  & 0,1,2,3  & $C_4 \oplus C_4$,$C_2 \oplus C_8$   & 0,1,2,3 \\ 
 \hline
 
 $C_4 \oplus C_4$  & --- & ---& $C_4 \oplus C_8$  &   0,1\\ 
 \hline
  $C_2 \oplus C_2$ 
  
  &\pbox{20cm}{ $C_2 \oplus C_{2n}$\\($n=2,3,4,6$) \\ $C_4 \oplus C_4$\\ }

  & 0,1,2,3  
 
  & \pbox{20cm}{ \  \\ $C_2 \oplus C_{2n}$\\($n=2,3,4,6$) \\ $C_4 \oplus C_4$\\ $C_4 \oplus C_8$\\}
  & 0,1,2,3\\ 
 \hline
\end{tabular}

}
\end{center}
\end{table}


\begin{table}

 \caption{ Summary of Main Results: Quadratic Twists over $K=\mathbb{Q}(\sqrt{d})$}

\begin{center}

\begin{tabular}{ |c|c|c| } 
 \hline
  \label{tableofresults2}

  & \multicolumn{2}{|c|}{$T_K(G)$} \\
 \hline

 G & $d=-3$  & $d=-1$  \\
 \hline

 $C_2 \oplus C_8$  &  $C_2 \oplus C_2$    &  $C_2 \oplus C_2$    \\ 
 \hline
 
 $C_2 \oplus C_6$  & $C_2 \oplus C_2$    &  $C_2 \oplus C_2$ \\ 
 \hline
 
 $C_2 \oplus C_4$  & $C_2 \oplus C_2$,$C_2 \oplus C_4$  &    $C_2 \oplus C_2$     \\ 
 \hline
 
 $C_4 \oplus C_4$  & --- &  $C_2 \oplus C_2$    \\ 
 \hline
  $C_2 \oplus C_2$ 
  
  &\pbox{20cm}{ $C_2 \oplus C_{2n}$\\($n=1,2,3,4$)  }

  & \pbox{20cm}{ \  \\ $C_2 \oplus C_{2n}$\\($n=1,2,3,4$) \\ $C_4 \oplus C_4$\\}
   \\ 
 \hline
\end{tabular}

\end{center}
\end{table}

Now we give an overview of  the paper.
In \cref{Background} we introduce quadratic twists, which play a crucial role in the study of growth.
If  $L = K(\sqrt{d})$, $E/K$ is an elliptic curve and $E(K)_{tor}$ is given then  $E^d(K)_{tor}$ 
yields a bound for the 2-part of $E(L)_{tor}$ 
(see \cref{kwon}). 
Hence if one wants to study the growth of the torsion structure $E(K)_{tor}$ the first task is to classify which torsion structures occur as $E^d(K)_{tor}$ (see \cref{Classification of Twists}).
The classification of twists uses a parametrization of elliptic curves containing a given torsion structure $C_2 \oplus C_{2n}$ ($1 \leq n \leq 4$) (\cref{Ono}).  Explicitly twisting and equating parametrized models of curves 
yields systems of diophantine equations.  These systems are solved in 
\cref{Some Diophantine}. 

Now, given a torsion structure  $E(K)_{tor}$, the classification of twists yields a finite list of possible torsion structures for the 2-part of $E(L)$.  Furthermore, the 2-part of $E(K)$ grows in $L$ if and only if for some point  $P \in E(K)_{tor}$, $P \in 2E(L)\setminus 2E(K)$. Given $E/K$ such that $E(K)[2] = C_2 \oplus C_2$ and $P \in E(K)$, \cref{lifting lemma} provides a criteria for when $P \in 2E(L)$.
 Applying this criteria to parametrized models 
as in \cref{Ono} yields systems of Diophantine equations.

Finding all the $K$-rational solutions to the Diophantine equations arising in our research would have been quite difficult without computer assistance.
We used the software packages Magma and Sage.   Given a curve $C/K$, Magma can determine the genus of $C$.  If the genus is 1 and we can find just one $K$-rational point $P$ on $C$, then Magma can find a Weierstrass model $E$ for $C$ and a birational map $f: C \rightarrow E$. 
 Given the Weierstrass equation for $E$, Magma can now try to compute the rank of $E(K)$.  If the rank is 0, then $E(K)=E(K)_{tor}$ is finite and Magma  can carry out standard algorithms to find all of $E(K)$. Finally one can use the map $f$ to recover all of $C(K)$.
    
If the curve  $C$ has genus at least 2, then $C(K)$ does not form a group as above, but the $K$-points of its Jacobian $J(C)(K)$ do.  
We have an injective morphism $f:C \rightarrow J(C)$. If $C$ is genus 2 hyperelliptic, Magma has functionality analogous to the the tools above for elliptic curves:  Magma can determine if $J(C)(K)$ is finite and can help determine the torsion subgroup of $J(C)(K)$.  
Finally one can often use the morphism $f$ to recover $C(K)$.

The algorithm we wrote to find examples of growth of torsion also relied upon Magma 
to factor division polynomials over quadratic fields, as well as compute the torsion over quartic number fields.  

Many of the calculations in our paper follow the ideas of Kwon \cite{Kwon}.  In a future paper we will describe progress on a similar classification of quadratic growth over quadratic cyclotomic fields in the case
$E(K)[2] = C_1$.

\section{Background}
\label{Background}
If $E$ is an elliptic curve with model
$$E: y^2 = x^3+Ax+B$$  Then its \textit{d-quadratic twist} is  $$E^d: y^2 = x^3+d^2Ax+d^3B$$  Note this curve is isomorphic over K to the curve with model $$dy^2 = x^3+Ax+B$$ so we often use them interchangeably.  On the other hand the map 
$$ T: E^d \rightarrow E$$ 
$$ (x,y) \mapsto (x,y\sqrt{d})$$
defines an isomorphism over $K(\sqrt{d})$.  Hence if $d$ is a square in $K$ then $E$ and $E^d$ are isomorphic over all extensions of $K$ and if $d$ is not a square then we have  $E(K(\sqrt{d})) \approx E^d(K(\sqrt{d}))$.

We also have the following proposition of Kwon, which allows us to bound the growth  $E(K)_{tor}$ in a quadratic extension.

\begin{prop}\cite{Kwon}\label{kwon}
Let K be a quadratic number field, $L=K(\sqrt{d})$ a quadratic extension of K and let $\sigma$ denote the generator of Gal(L/K).
The map $$ E(L)_{tor} \rightarrow E^d(K)_{tor}$$
   $$P \mapsto P - \sigma(P)$$

is a homomorphism with kernel $E(K)_{tor}$ and hence induces an injection 
$$ E(L)_{tor}/E(K)_{tor} \hookrightarrow E^d(K)_{tor}$$

\end{prop}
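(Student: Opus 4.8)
The plan is to use the twisting isomorphism $T\colon E^d\to E$ over $L=K(\sqrt d)$ recalled in the text, together with a direct analysis of the ``averaging'' map $\lambda\colon E(L)\to E(L)$, $P\mapsto P-\sigma(P)$. The whole statement will follow once we (i) see that $\lambda$ is a homomorphism with kernel $E(K)$, (ii) locate its image inside the $\sigma$-anti-invariant part of $E(L)$, and (iii) identify that anti-invariant part with $E^d(K)$ via $T$.

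For (i): since $E$ is defined over $K$, the element $\sigma\in\Gal(L/K)$ acts on $E(L)$ coordinate-wise and fixes the point at infinity, hence acts as a group automorphism of $E(L)$; therefore $P\mapsto\sigma(P)$ is additive and so is $\lambda$. Its kernel is $\{P\in E(L):\sigma(P)=P\}=E(L)^{\Gal(L/K)}=E(K)$, and $\lambda$ carries torsion points to torsion points, so restricting to $E(L)_{tor}$ yields a homomorphism $E(L)_{tor}\to E(L)_{tor}$ with kernel $E(K)_{tor}$. For (ii): because $\sigma^2=\mathrm{id}$, for any $P$ the point $Q:=\lambda(P)=P-\sigma(P)$ satisfies $\sigma(Q)=\sigma(P)-\sigma^2(P)=\sigma(P)-P=-Q$, so the image of $\lambda$ lies in the subgroup $A:=\{Q\in E(L):\sigma(Q)=-Q\}$.

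For (iii): I claim $T$ restricts to a group isomorphism $E^d(K)\isom A$, where $E^d$ is taken in the model $dy^2=x^3+Ax+B$ so that $T(x,y)=(x,y\sqrt d)$. If $(u,v)\in E^d(K)$, i.e. $u,v\in K$ and $dv^2=u^3+Au+B$, then $T(u,v)=(u,v\sqrt d)$ lies on $E$ and $\sigma\bigl(T(u,v)\bigr)=(u,-v\sqrt d)=-T(u,v)$, so $T(u,v)\in A$; conversely, a finite point $Q=(x_0,y_0)\in A$ has $-Q=(x_0,-y_0)$, so $\sigma(Q)=-Q$ forces $x_0\in K$ and $y_0$ in the $(-1)$-eigenspace $K\cdot\sqrt d$ of $\sigma$ acting on $L$, say $y_0=c\sqrt d$ with $c\in K$; then $y_0^2=x_0^3+Ax_0+B$ reads $dc^2=x_0^3+Ax_0+B$, i.e. $T^{-1}(Q)=(x_0,c)\in E^d(K)$. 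The point at infinity and the $2$-torsion points ($y_0=0$) are covered by the same formulas. Thus $T^{-1}\circ\lambda$ is a homomorphism $E(L)_{tor}\to E^d(K)_{tor}$ with kernel exactly $E(K)_{tor}$, and it therefore induces the asserted injection $E(L)_{tor}/E(K)_{tor}\hookrightarrow E^d(K)_{tor}$. (Structurally this is just the Tate cohomology of the order-$2$ group $\Gal(L/K)$: $1-\sigma$ has kernel $E(L)^{\sigma}$ and image contained in $\ker(1+\sigma)=A$, and $A\cong E^d(K)$ on torsion.) I do not expect a genuine obstacle here; the only point needing care is step (iii) — reconciling the two models $dy^2=x^3+Ax+B$ and $y^2=x^3+d^2Ax+d^3B$ of $E^d$ and checking that ``anti-invariant under $\sigma$'' corresponds \emph{exactly}, not merely up to finite index, to ``$K$-rational on $E^d$'', including the degenerate cases.
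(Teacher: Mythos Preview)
The paper does not actually prove this proposition; it is stated with a citation to \cite{Kwon} and used as a black box. Your argument is correct and is essentially the standard one: $\lambda=1-\sigma$ is a homomorphism with kernel the Galois-fixed points $E(K)$, its image lies in the $(-1)$-eigenspace of $\sigma$ on $E(L)$, and the twisting isomorphism $T$ identifies that eigenspace with $E^d(K)$. There is nothing to compare against in the present paper, and no gap in what you wrote.
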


\begin{prop}\cite{Tornero}*{Cor. 4}
\label{oddpartsadd}
If n is an odd positive integer we have 
$$E(K(\sqrt{d}))[n] \approx E(K)[n] \oplus E^d(K)[n]$$ 
\end{prop}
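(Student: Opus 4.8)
\emph{Plan.} Write $L = K(\sqrt d)$ and assume $L \neq K$ (if $d$ is a square then $E^d \cong E$ over $K$ and the asserted identity is to be read for genuine quadratic extensions). Let $\sigma$ generate $\Gal(L/K)$ and put $M = E(L)[n]$, a module over $\Z/n\Z$ carrying the action of $\sigma$, with $\sigma^2 = \id$. Since $n$ is odd, $2$ is a unit in $\Z/n\Z$, so the operators $e_\pm = \tfrac12(\id \pm \sigma)$ are complementary idempotents on $M$ and give the eigenspace decomposition $M = M^+ \oplus M^-$, where $M^{\pm} = \{P \in M : \sigma P = \pm P\}$. The proof then amounts to identifying each summand.

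The summand $M^+$ consists of the $n$-torsion points of $E(L)$ fixed by $\sigma$, hence $M^+ = E(K)[n]$. For $M^-$ I would use the $L$-isomorphism $T\colon E^d \to E$, $(x,y)\mapsto(x, y\sqrt d)$, recalled in \cref{Background}. Tracking the action of $\sigma$ on $\sqrt d$ yields, for every $Q \in E^d(L)$, the relation $\sigma(TQ) = -\,T(\sigma Q)$; since $T$ is injective this shows $\sigma(TQ) = -TQ$ if and only if $\sigma Q = Q$, i.e.\ $Q \in E^d(K)$. As $T$ also commutes with multiplication by $n$, it restricts to an isomorphism $E^d(K)[n]\isom M^-$. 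Combining the two identifications gives $E(L)[n] = M^+ \oplus M^- \approx E(K)[n] \oplus E^d(K)[n]$.

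One can instead run this through \cref{kwon}: restricting the homomorphism there to $n$-torsion produces the exact sequence $0 \to E(K)[n] \to E(L)[n] \xrightarrow{\,P\mapsto P-\sigma P\,} E^d(K)[n]$, and because $n$ is odd the assignment $R \mapsto \tfrac12 R$ simultaneously furnishes preimages (proving surjectivity) and a splitting of the sequence. Either way, the only point requiring genuine care is the sign twist introduced by $T$, and the place where oddness of $n$ is essential is precisely the existence of $\tfrac12$ on the $n$-torsion: for even $n$ the eigenspaces $M^{\pm}$ need not span $M$, and the statement fails in general.
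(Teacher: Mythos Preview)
The paper does not actually supply a proof of this proposition; it merely cites \cite{Tornero}*{Cor.~4}. Your argument is correct and is in fact the standard one: decompose $E(L)[n]$ into $\pm 1$-eigenspaces for $\sigma$, using that $2$ is invertible modulo the odd integer $n$, and identify the two pieces as $E(K)[n]$ and (via the twist isomorphism $T$) $E^d(K)[n]$. The alternative phrasing through the exact sequence coming from \cref{kwon} is also fine and amounts to the same computation. There is nothing to correct.
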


\begin{thm}\cite{Si}*{Cor. 6.4}
\label{torsionoverc}
If K is a field with char(K)=0, E/K is an elliptic curve and n is a positive integer then  
$$E(\overline{K})[n] \approx C_n \oplus C_n$$
\end{thm}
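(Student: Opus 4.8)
The plan is to first compute $\#E(\overline{K})[m]$ for every positive integer $m$ using the degree and separability of the multiplication-by-$m$ map, and then to read off the group structure from the structure theorem for finite abelian groups together with the resulting numerical constraints. Throughout, write $[m]\colon E\to E$ for multiplication by $m$, so that $E(\overline{K})[m]=\ker[m]$.

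First I would establish the order. The map $[m]$ is a nonconstant isogeny of degree $m^2$; this can be seen either from the division-polynomial description of the $x$-coordinate of $[m]P$ (whose numerator has degree $m^2$), or from the fact that $\deg$ is a positive-definite quadratic form on $\End(E)$ with $\deg[1]=1$, whence $\deg[m]=m^{2}\deg[1]=m^{2}$. Since $\mathrm{char}(K)=0$, the isogeny $[m]$ is separable: for instance it acts on the one-dimensional space of invariant differentials by $\omega\mapsto m\omega\neq 0$, so the induced extension of function fields is separable (indeed any dominant morphism of varieties over a field of characteristic $0$ is generically separable). For a separable isogeny the kernel has cardinality equal to the degree — all fibers of the isogeny are translates of the kernel, hence equinumerous, and the generic fiber of a finite separable morphism of smooth projective curves over $\overline K$ has exactly $\deg$ points. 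Therefore $\#E(\overline{K})[m]=m^{2}$ for every $m\geq 1$.

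Next I would pin down the structure. By the structure theorem, write $E(\overline{K})[m]\cong \Z/d_1\oplus\cdots\oplus\Z/d_k$ with $1<d_1\mid d_2\mid\cdots\mid d_k$, after discarding trivial summands; if no summands remain then $m^2=1$ and the claim is trivial. Since $m$ annihilates the group, $d_k\mid m$, so $d_k\leq m$. Because $d_1\mid d_i$ for each $i$, the $d_1$-torsion of each cyclic summand is a copy of $\Z/d_1$, so the $d_1$-torsion subgroup of $E(\overline{K})[m]$, which is exactly $E(\overline{K})[d_1]$, is isomorphic to $(\Z/d_1)^{k}$. Comparing cardinalities with the order formula applied to $d_1$ gives $d_1^{\,k}=d_1^{\,2}$, and since $d_1>1$ this forces $k=2$. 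Finally $d_1 d_2=\#E(\overline{K})[m]=m^{2}$ together with $d_1\leq d_2\leq m$ forces $d_1=d_2=m$, i.e. $E(\overline{K})[m]\cong C_m\oplus C_m$; taking $m=n$ yields the theorem.

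The only substantive inputs are the degree formula $\deg[m]=m^{2}$ and the principle that a separable isogeny has kernel of order equal to its degree; these are precisely where the geometry of elliptic curves enters, and in a fully self-contained treatment the computation of $\deg[m]$ (via the dual isogeny, or via an explicit division-polynomial calculation) is the main piece of work. Everything after Step 1 is purely formal group theory. Since the statement coincides with \cite{Si}*{Cor. 6.4}, one may of course also simply invoke it.
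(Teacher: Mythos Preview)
Your argument is correct and is essentially the standard proof given in Silverman (the cited \cite{Si}*{Cor.~6.4}): compute $\#\ker[m]=\deg[m]=m^2$ via separability in characteristic~$0$, then use the invariant-factor decomposition together with the count for $E(\overline{K})[d_1]$ to force two cyclic factors each of order~$m$. The paper itself does not supply a proof of this statement; it simply quotes it as a known result from \cite{Si}, so there is nothing further to compare.
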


The following lemma plays a crucial role in studying the 2-part of the torsion of an elliptic curve:
\begin{lem}\cite{Knapp}
\label{lifting lemma}
Let E be an elliptic curve over a field F with char(F) $\ne 2,3$.  Suppose E is given by 
$$ y^2 = (x-\alpha)(x-\beta)(x-\gamma)$$
with $\alpha, \beta, \gamma \in F$.  For $P=(x_0,y_0)$ in E(F) there exists Q in E(F) such that $2Q=P$ iff    $x_0-\alpha, x_0-\beta, x_0-\gamma$ are squares in F.

\end{lem}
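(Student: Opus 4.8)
The plan is to reduce the statement to the explicit duplication formula. Since the substitution $x \mapsto x+c$ leaves each of $x_0-\alpha$, $x_0-\beta$, $x_0-\gamma$ unchanged and does not affect whether $P \in 2E(F)$, I would first (using $\operatorname{char}(F)\ne 3$) normalize to a model $y^2 = x^3+ax+b$, i.e.\ $\alpha+\beta+\gamma = 0$, so that the two roots other than a fixed root $\alpha$ satisfy $\alpha'+\alpha'' = -\alpha$ and $\alpha'\alpha'' = a+\alpha^2$. The one identity I would set up at the outset is
$$x_1^4 - 2ax_1^2 - 8bx_1 + a^2 - 4e\,(x_1^3+ax_1+b) = \bigl(x_1^2 - 2ex_1 - a - 2e^2\bigr)^2, \qquad e\in\{\alpha,\beta,\gamma\},$$
which, together with the duplication formula $x(2Q) = (x_1^4-2ax_1^2-8bx_1+a^2)/\bigl(4(x_1^3+ax_1+b)\bigr)$ for $Q=(x_1,y_1)$ with $y_1\ne 0$, says precisely that $x(2Q)-e = \bigl((x_1^2-2ex_1-a-2e^2)/(2y_1)\bigr)^2$. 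The forward implication is then immediate: if $P=2Q$ with $Q=(x_1,y_1)\in E(F)$, then $P$ affine forces $Q\notin E[2]$ and $y_1\ne 0$, so the identity exhibits each $x_0-e = x(2Q)-e$ as a square in $F$.

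For the converse — the substantive part — I would construct a halving point explicitly. Choose $r,s,t\in F$ with $r^2=x_0-\alpha$, $s^2=x_0-\beta$, $t^2=x_0-\gamma$; then $(rst)^2 = (x_0-\alpha)(x_0-\beta)(x_0-\gamma) = y_0^2$, and since at most one of $r,s,t$ can vanish ($\alpha,\beta,\gamma$ being distinct) I may flip one sign to arrange $rst=y_0$. I would then propose
$$x_1 := x_0 + rs + rt + st, \qquad y_1 := (r+s)(s+t)(t+r).$$
The key algebraic observation is the factorization $x_1-\alpha = r^2 + rs + rt + st = (r+s)(r+t)$, and cyclically $x_1-\beta = (s+r)(s+t)$, $x_1-\gamma = (t+r)(t+s)$; hence $(x_1-\alpha)(x_1-\beta)(x_1-\gamma) = y_1^2$, so $Q:=(x_1,y_1)$ is an $F$-rational point of $E$ with $y_1\ne 0$. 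To confirm $2Q=\pm P$ it suffices, by the master identity with $e=\alpha$, to verify $x_1^2-2\alpha x_1-a-2\alpha^2 = 2ry_1$ (then $x_1^4-2ax_1^2-8bx_1+a^2 = 4r^2y_1^2+4\alpha y_1^2 = 4x_0y_1^2$, so $x(2Q)=x_0$); using $x_1-\alpha = (r+s)(r+t)$ and $(\alpha-\beta)(\alpha-\gamma) = (s^2-r^2)(t^2-r^2) = (r+s)(r+t)(s-r)(t-r)$ this collapses to
$$x_1^2-2\alpha x_1-a-2\alpha^2 = (x_1-\alpha)^2 - (\alpha-\beta)(\alpha-\gamma) = (r+s)(r+t)\bigl[(r+s)(r+t)-(s-r)(t-r)\bigr] = 2r\,(r+s)(r+t)(s+t).$$
Thus $x(2Q)=x_0$, so $2Q\in\{P,-P\}$, and replacing $Q$ by $-Q$ if necessary gives $2Q=P$ with $Q\in E(F)$.

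So the whole lemma rests on two polynomial identities and the fortunate factorization of $x_1-\alpha$; I do not anticipate a real obstacle, only the bookkeeping around the sign convention $rst=y_0$ (and the harmless degenerate case $y_0=0$, where one of $r,s,t$ is forced to be $0$ but the computation is unaffected) and the final step $2Q=\pm P$. An alternative, more conceptual route for the converse is cohomological: since $E[2]\subset E(F)$, Kummer theory identifies the injective connecting map $E(F)/2E(F)\hookrightarrow H^1(G_F,E[2])\cong\bigl(F^\times/(F^\times)^2\bigr)^2$ with $P\mapsto(x_0-\alpha,\,x_0-\beta)$ modulo squares, so that $P\in 2E(F)$ exactly when $x_0-\alpha$ and $x_0-\beta$ — hence also $x_0-\gamma$, their product being $y_0^2$ — are squares; but the hands-on construction above is shorter and fits the computational spirit of the paper (cf.\ \cite{Knapp}).
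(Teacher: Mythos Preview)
The paper itself does not prove this lemma: it is simply quoted with a citation to Knapp's book \cite{Knapp}, where the result appears (as Theorem~4.2). So there is no ``paper's proof'' to compare against beyond the reference.

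Your argument is correct and is essentially Knapp's own proof --- the explicit halving point $Q=(x_1,y_1)$ with $x_1=x_0+rs+rt+st$ and the key factorization $x_1-\alpha=(r+s)(r+t)$ are exactly his construction. Packaging the duplication formula via your ``master identity'' is a tidy way to verify $x(2Q)=x_0$ without chasing tangent lines. One small point worth making explicit: your claim $y_1\ne 0$ follows because $r+s=0$ would force $r^2=s^2$, i.e.\ $\alpha=\beta$, and similarly for the other factors; you assert this but do not quite say why. With that filled in, the proof is complete. The cohomological alternative you sketch (the $2$-descent Kummer map $E(F)/2E(F)\hookrightarrow (F^\times/F^{\times 2})^2$) is indeed the conceptual explanation, but for the hands-on lifting computations the paper performs, the explicit construction is the appropriate version.
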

If a point $P$ meets the criteria in the theorem above, we will say the point $P$ $\textit{lifts}$ in $K$.  Below we have a variant of a result of Ono.  His result held for elliptic curves over $\mathbb{Q}$.  Note the result below does not depend on the class group nor the unit group of the number field $K$.
\begin{thm}\cite{Ono}
\label{Ono}
Let K be  number field and E/K an elliptic curve with full 2-torsion. Then E has a model of the form $y^2=x(x+M)(x+N)$ where M,N $\in \mathcal{O}_K$.
\begin{enumerate}
\item  E(K) has a point of order 4 iff M, N are both squares (in $\mathcal{O}_K$), or -M, N-M are both squares, or -N,M-N are both squares.
\item   E(K) has a point of order 8 iff  there exists a d $\in \mathcal{O}_K$, $d \ne 0$ and a pythagorean triple (u,v,w) (i.e. u,v,w $\in \mathcal{O}_K$ with $u^2+w^2=v^2$)  such that $ M = d^2u^4, N=d^2v^4$ or  $-M = d^2u^4, N-M=d^2v^4$, or $ -N = d^2u^4, M-N=d^2v^4$.
\item  E(K) contains a point of order 3 iff there exists $a,b,d \in \mathcal{O}_K$ with $d \ne 0$ and $\frac{a}{b} \not\in \{-2,-1,-\frac{1}{2},0,1\}$ such that $M=a^3(a+2b)d^2$ and $N=b^3(b+2a)d^2$.

\end{enumerate}
\end{thm}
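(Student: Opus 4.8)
The plan is to deduce all three parts from \cref{lifting lemma}, using one structural fact in place of the unique factorization Ono relied on: $\mathcal{O}_K$ is integrally closed in $K$, so an element of $\mathcal{O}_K$ that is a square in $K$ is already a square in $\mathcal{O}_K$ (its square root is a root of $T^2-x$, hence integral over $\mathcal{O}_K$). I will also use the freedom to rescale a model $y^2=x(x+M)(x+N)$ by $(x,y)\mapsto(u^{-2}x,u^{-3}y)$ with $u\in\mathcal{O}_K$: starting from a splitting $y^2=(x-e_1)(x-e_2)(x-e_3)$ with $e_i\in K$, translating $e_1$ to $0$ and then rescaling clears denominators and produces $M,N\in\mathcal{O}_K$ (with $M,N\ne 0$ and $M\ne N$, by nonsingularity), while later the same freedom absorbs class-group and unit ambiguities into the parameter $d$. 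In this model the three points of order $2$ are $(0,0)$, $(-M,0)$, $(-N,0)$.

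For part (1), a point of $E(K)$ has order $4$ precisely when it halves one of the three points of order $2$. Applying \cref{lifting lemma} with $(\alpha,\beta,\gamma)=(0,-M,-N)$: $(0,0)$ lies in $2E(K)$ iff $M$ and $N$ are squares; $(-M,0)$ lies in $2E(K)$ iff $-M$ and $N-M$ are squares; $(-N,0)$ lies in $2E(K)$ iff $-N$ and $M-N$ are squares. Integral closedness lets these square roots be taken in $\mathcal{O}_K$, which is the asserted trichotomy.

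For part (2), a point of order $8$ is one whose double has order $4$. Using the symmetry among the three points of order $2$, I would reduce to a point of order $4$ lying over $(0,0)$, so $M=m^2$ and $N=n^2$ with $m,n\in\mathcal{O}_K$; the two points of order $4$ over $(0,0)$ then have $x$-coordinate $\pm\sqrt{MN}$ and are both $K$-rational. By \cref{lifting lemma}, one of them lies in $2E(K)$ iff $mn,\ m(m+n),\ n(m+n)$ are all squares, or $-mn,\ m(m-n),\ n(n-m)$ are all squares. Setting $mn=t^2$ and $m(m+n)=p^2$ and eliminating $n$ produces the Pythagorean relation $p^2=m^2+t^2$ together with $n=t^2/m$; unwinding this relation, and using a scaling parameter $d$ to record the rescaling of the model, exhibits $M$ and $N$ as $d^2$ times fourth powers of two members of a single Pythagorean triple — the first of the three listed shapes — while the other square-condition and the two remaining relabelings of the $2$-torsion yield the other two shapes.

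For part (3), the forward direction is a direct check: when $M=a^3(a+2b)d^2$ and $N=b^3(b+2a)d^2$ with $a/b\notin\{-2,-1,-\tfrac{1}{2},0,1\}$, one exhibits a root $x_0\in K$ of the $3$-division polynomial of $y^2=x(x+M)(x+N)$ with $x_0(x_0+M)(x_0+N)$ a square, hence a $K$-point of order $3$; the excluded ratios are precisely those forcing one of $M$, $N$, $M-N$ to vanish, i.e.\ making $E$ singular, so nothing further need be excluded. For the converse, let $P_0=(x_0,y_0)\in E(K)$ have order $3$, so $2P_0=-P_0$ and the tangent to $E$ at $P_0$ meets $E$ with multiplicity $3$ there; writing this line as $y=\lambda(x-x_0)+y_0$ and equating the monic cubics
$$x(x+M)(x+N)-\bigl(\lambda(x-x_0)+y_0\bigr)^2=(x-x_0)^3,$$
together with $y_0^2=x_0(x_0+M)(x_0+N)$ and the tangency condition on $\lambda$, gives a system in $x_0,\lambda,M,N$; solving it — equivalently, passing through the Tate normal form $y^2+a_1xy+a_3y=x^3$ of a curve with a rational $3$-torsion point and imposing full $2$-torsion — and then rescaling to clear denominators recovers the stated shape of $M,N$ with $a,b,d\in\mathcal{O}_K$. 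I expect the crux throughout to be the Diophantine bookkeeping in (2) and (3): carrying out the eliminations, recognizing the Pythagorean and cubic parametrizations, and — the point that is genuinely new over $K$ — checking at each stage that the parameters land in $\mathcal{O}_K$ rather than merely in $K$. That is exactly where Ono used that $\Z$ is a UFD with finite unit group; here it should be handled uniformly by integral closedness of $\mathcal{O}_K$ together with the rescaling freedom absorbed into $d$.
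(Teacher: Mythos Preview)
Your treatment of parts (1) and (2) is essentially identical to the paper's: both apply \cref{lifting lemma} to the three $2$-torsion points for (1), and for (2) both reduce to the case $M=m^2$, $N=n^2$, write down the explicit order-$4$ points over $(0,0)$, apply \cref{lifting lemma} again to extract three square conditions, and eliminate to reach a Pythagorean relation with a scaling parameter $d$ (the paper takes $d=n^{-1}$, you take $d=m^{-1}$, and both rely on rescaling to force $d\in\mathcal{O}_K$). The integral closedness remark you highlight is exactly what the paper uses.

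Part (3) is where you and the paper genuinely diverge. The paper does not pass through the tangent line at $P_0$ or the Tate normal form; instead it writes down the $3$-division polynomial
\[
3x^4+4(M+N)x^3+6MNx^2-M^2N^2=0,
\]
observes (citing Boston) that this quartic surface in $(M,N,x)$ has the explicit rational parametrization $M/x=(1+t)^2-1$, $N/x=(1+t^{-1})^2-1$, and then substitutes back into $y^2=x(x+M)(x+N)$ to see that $x$ is forced to be a square in $K$; writing $t=a/b$ and clearing denominators gives the claimed shape of $M,N$ directly. This is shorter and more mechanical than your plan: you would have to solve the cubic-equals-line system (or impose full $2$-torsion on the Tate normal form) and then recognize the answer, whereas the paper starts from a parametrization already in the right variables. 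Your approach is standard and would work, but the Boston parametrization bypasses the algebra you anticipate as ``the crux.''
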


\begin{proof}

1) E has a point of order 4 over K iff (0,0), (-M,0) or (-N,0) lifts over K.  But by the     \cref{lifting lemma}, (0,0) lifts iff M,N are squares in K, (-M,0) lifts iff $-M, N-M$ are squares in K and (-N,0) lifts iff $-N, M-N$ are squares in K.  Also note that since  $\mathcal{O}_K$ is integrally closed, if $x \in  \mathcal{O}_K$ is a square in K then it is a square in  $\mathcal{O}_K$.

2)  Suppose E(K) has a point P of order 8.  By translating if necessary, we may assume WLOG that $4P = (0,0)$.  Hence by  \cref{lifting lemma} there exist $a,b \in  \mathcal{O}_K$ such that $M=a^2$ and $N=b^2$ and the four order 4 points above (0,0) are:
$$ (ab, \pm ab(a+b))    $$
$$    (-ab, \pm ab(a-b)) $$
If $[2]:E \rightarrow E$ denotes the multiplication-by-2 map on E, note that $|Ker([2])| = 4$ so we expect 4 points above.  Replacing $a$ by $-a$ if necessary, we may assume WLOG that $ (ab, \pm ab(a-b))$ lifts over K. By  \cref{lifting lemma} $ab, ab+a^2$ and $ab+b^2$ are squares in K so there exists $u,w \in 
\mathcal{O}_K$ such that $ab=u^2$ and $ab+b^2=w^2$. Hence if we let $d = b^{-1}$ then:
$$M=a^2= (b^{-1})^2(ab)^2=d^2u^4$$
$$N = b^2 = d^2b^4$$ Furthermore, $$u^2+b^2 = ab + b^2 = w^2$$
The converse is easy to verify by \cref{lifting lemma}.\\

3)  A point $(x,y)$ on our curve is order 3 iff $x$ is a root of the 3-division polynomial:
$$3x^4+4(M + N)x^3 + 6MNx^2 - M^2N^2 = 0$$
As Ono notes, this curve (in M,N,x) has a rational parametrization (due to Nigel Boston) of the form:
$$\frac{M}{x} = (1+t)^2-1 $$
$$ \frac{N}{x} =  (1+t^{-1})^2-1$$
Hence $$M = x((1+t)^2-1)$$
$$N = x((1+t^{-1})^2-1)$$ and as $(x,y)$ lies on our curve, we have have:
\begin{align*}
y^2 & = x(x+M)(x+N)\\
& = x(x+ x((1+t)^2-1))(x+x((1+t^{-1})^2-1) )\\
& =x^3(1+(1+t)^2-1)(1+(1+t^{-1})^2-1)\\
& = x^3(1+t)^2(1+t^{-1})^2\\
\end{align*}
and hence $x$ is a square:
$$x = [\frac{y}{x(1+t)(1+t^{-1})}]^2$$
\noindent Let $c = \frac{y}{x(1+t)(1+t^{-1})}$ so we have $x=c^2$. Then:
\begin{align*}
M & = c^2((1+t)^2-1) \\
& = c^2(t^2+2t)\\
& =( \frac{c}{t})^2 t^3(t+2)\\
\end{align*}
Similarly:
\begin{align*}
N & = c^2((1+t^{-1})^2-1) \\
& = (\frac{c}{t})^2(2t+1)\\
\end{align*}

Writing $t := \frac{a}{b}$ for some a,b $\in$ $\mathcal{O}_K$ and  $d := \frac{c}{tb^2}$, the result follows. 

\end{proof}

\section{Some Diophantine Equations}

\label{Some Diophantine}
The curve $x^2+y^2=z^2$ is genus 0 and hence can be parametrized.  

\begin{prop}
\label{circle}

Let K be a number field and let C be the curve $$x^2+y^2 = z^2$$  For every K-rational projective point P $\ne [-1,0,1]$ on C there exists an $m \in K$ such that $P = [1-m^2, 2m, 1+m^2]$.

\end{prop}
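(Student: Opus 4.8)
The plan is to realize $C$ as the image of $\mathbb{P}^1$ under stereographic projection away from the obvious $K$-rational point $[-1,0,1]$. Concretely, given a $K$-rational point $P = [x_0,y_0,z_0] \neq [-1,0,1]$, the line joining $P$ to $[-1,0,1]$ is defined over $K$ and its ``slope'' will be the sought-for parameter $m$.

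First I would pin down which point is being excluded. If $P = [x_0,y_0,z_0]$ on $C$ satisfies $x_0 + z_0 = 0$, then from $x_0^2 + y_0^2 = z_0^2$ we get $y_0^2 = z_0^2 - x_0^2 = 0$, so $y_0 = 0$ and $P = [x_0,0,-x_0] = [-1,0,1]$. Thus for every admissible $P$ we have $x_0 + z_0 \neq 0$ (recall $\operatorname{char} K = 0$, so dividing by $x_0+z_0$ or by $2$ causes no trouble), and we may set $m := y_0/(x_0+z_0) \in K$.

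It then remains to verify that this $m$ works, which is a direct computation. Writing the defining equation as $y_0^2 = (z_0 - x_0)(z_0 + x_0)$ gives $m^2 = (z_0 - x_0)/(z_0 + x_0)$, and hence $1 - m^2 = 2x_0/(z_0+x_0)$, $1 + m^2 = 2z_0/(z_0+x_0)$, and $2m = 2y_0/(z_0+x_0)$. Clearing the common denominator $z_0 + x_0$ and the factor $2$, we obtain $[1-m^2,\,2m,\,1+m^2] = [x_0,y_0,z_0] = P$, as claimed.

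The argument is essentially a routine conic parametrization, so there is no serious obstacle; the only place requiring a little care is the case analysis identifying $[-1,0,1]$ as the unique point of $C$ with $x+z=0$ (equivalently, checking that the excluded point is exactly the centre of projection). One can also present the same computation geometrically: in the affine chart $z=1$, substituting $y = m(x+1)$ into $x^2+y^2=1$ yields $(x+1)\bigl((1+m^2)x-(1-m^2)\bigr)=0$, whose non-base-point root $x = (1-m^2)/(1+m^2)$, together with $y = 2m/(1+m^2)$, gives the stated formula; this phrasing also makes transparent that $m \mapsto [1-m^2,2m,1+m^2]$ (together with $\infty \mapsto [-1,0,1]$) parametrizes all of $C$.
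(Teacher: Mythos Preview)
Your argument is correct: it is the standard stereographic-projection parametrization of a smooth conic from a rational point, and all the steps check out (in particular your identification of $[-1,0,1]$ as the unique point with $x+z=0$, and the algebra leading to $[1-m^2,2m,1+m^2]=[x_0,y_0,z_0]$). The paper in fact states this proposition without proof, treating it as a well-known fact, so your write-up supplies exactly the routine verification the paper omits.
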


\begin{prop}
\label{Fermat}

Let K= Q$(\sqrt{D}$) (with D a squarefree integer) be any imaginary quadratic number field with class number 1 except D=-7.  The equation 

$$u^4-v^4 = w^2$$

has no solutions $(u,v,w)$ with $u,v,w \in \mathcal{O}_K$ and $(u,v,w) \ne (0,0,0)$.
\end{prop}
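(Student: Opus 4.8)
The plan is to run an infinite descent modelled on the classical proof that $x^4-y^4=z^2$ has no nontrivial solutions over $\mathbb{Z}$, carried out inside $\mathcal{O}_K$. Throughout we use that $\mathcal{O}_K$ is a PID, which is exactly the class number one hypothesis, so that unique factorization and greatest common divisors are available; the size of a putative solution will be measured by the norm $N=N_{K/\mathbb{Q}}\colon\mathcal{O}_K\to\mathbb{Z}_{\ge 0}$, whose values lie in a well-ordered set. Triples with a zero coordinate (such as $(1,0,1)$) are harmless for the applications, so it suffices to rule out solutions with $uvw\ne 0$. Suppose one exists and choose one minimizing $N(w)$. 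If a prime $\pi$ divided both $u$ and $v$, then $\pi^4\mid w^2$, hence $\pi^2\mid w$, and $(u/\pi,v/\pi,w/\pi^2)$ would be a strictly smaller such solution; so we may assume $\gcd(u,v)=1$, and then $u$, $v$, $w$ are pairwise coprime away from the prime(s) above $2$.

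First I would rewrite the equation as $(v^2)^2+w^2=(u^2)^2$, a Pythagorean triple over $\mathcal{O}_K$, and factor $w^2=(u^2-v^2)(u^2+v^2)$, the two factors being coprime up to a common divisor of $2$. Here the hypothesis $D\ne -7$ is decisive: in each of the eight remaining fields the prime $2$ is non-split --- ramified when $D\in\{-1,-2\}$, inert when $D\in\{-3,-11,-19,-43,-67,-163\}$ --- so there is a \emph{unique} prime $\mathfrak q\mid 2$, and the usual parity bookkeeping for Pythagorean triples (exactly one leg is divisible by $\mathfrak q$) remains valid; over $\mathbb{Q}(\sqrt{-7})$, where $2$ splits, it fails, and indeed $(u,v,w)=(3,4,5\sqrt{-7})$ is a genuine solution since $3^4-4^4=-175=(5\sqrt{-7})^2$. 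Splitting according to whether $\mathfrak q\mid w$, I would use unique factorization --- together with the fact that $\mathcal{O}_K^{\times}/(\mathcal{O}_K^{\times})^2$ has order two in every one of the eight cases --- to write $u^2-v^2$ and $u^2+v^2$ as units times squares, and then apply the genus-zero parametrization of \cref{circle} to extract $m,n\in\mathcal{O}_K$ with $\gcd(m,n)=1$ and, in the principal case, $u^2=m^2+n^2$, $v^2=m^2-n^2$, $w=\pm 2mn$. Multiplying the first two relations yields $(uv)^2=m^4-n^4$, a new solution of the same equation; the sub-case $\mathfrak q\mid w$ is handled the same way after one more extraction of a square, exactly as over $\mathbb{Z}$.

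I expect the real obstacle to be proving that the descent terminates. Over $\mathbb{Z}$ one descends on the ``top'' coordinate because $u^2=m^2+n^2>m^2$ forces $0<m<u$; over an imaginary quadratic field there is no ordering, and additive cancellation means $|m^2+n^2|$ can a priori be far smaller than $|m|^2$ (e.g. $n$ near $\pm im$), so one cannot simply conclude $N(m)<N(u)$. The fix I would pursue is: show that when the new triple $(m,n,uv)$ is non-degenerate, the relations $u^2=m^2+n^2$, $v^2=m^2-n^2$ together with $\gcd(m,n)=1$ and the coprimality of $m\pm n$ away from $\mathfrak q$ force $N(uv)<N(w)$ (the decisive gain being the factor $N(\mathfrak q)>1$ inside $w=\pm 2mn$), whereas if it is degenerate one traces the same relations backwards to deduce $vw=0$, so that the original solution was already degenerate --- either way contradicting the choice of $(u,v,w)$. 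As a cross-check, or as an alternative to the descent, one may instead observe that the smooth model $C$ of $w^2=u^4-v^4$ has genus $1$ and the rational point $(u:v:w)=(1:1:0)$, hence is isomorphic over $K$ to its Jacobian $E/\mathbb{Q}$; a finite computation (for instance in Magma) gives $\operatorname{rank} E(K)=\operatorname{rank} E(\mathbb{Q})+\operatorname{rank} E^{(D)}(\mathbb{Q})=0$ for each of the eight $D$, so $E(K)=C(K)$ is finite and direct enumeration shows it contains no point with all coordinates nonzero.
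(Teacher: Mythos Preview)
Your descent proposal has a genuine gap at exactly the point you flag: termination. Over $\mathbb{Z}$ the inequality $m^2<m^2+n^2=u^2$ is what drives the descent, and as you correctly observe there is no such inequality for norms in $\mathcal{O}_K$. Your suggested fix---descend on $N(w)$ using $w=\pm 2mn$---does not work as stated: from $u^2=m^2+n^2$ and $v^2=m^2-n^2$ one gets $N(uv)^2=N(m^4-n^4)$, and there is no general reason for $N(m^4-n^4)$ to be bounded by $N(2)^2N(m)^2N(n)^2$ (take $n$ a unit and $m$ of large norm). The constraints that $m^2\pm n^2$ both be squares in $\mathcal{O}_K$ do not obviously rescue this, and you have not supplied an argument that they do. So the descent, as written, is a sketch of what one would like to be true rather than a proof.

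Your closing ``cross-check'' is in fact the paper's entire argument, and it is complete on its own. The paper writes down the explicit birational map
\[
(u,v,w)\longmapsto\Bigl(\tfrac{2v^2}{u^2-w},\ \tfrac{4uv}{w-u^2}\Bigr)
\]
from the quartic to the elliptic curve $E\colon y^2=x^3+4x$, verifies via Magma that $E(K)$ has rank $0$ for each of the eight admissible values of $D$ (and rank $1$ precisely when $D=-7$, which is why that case is excluded), computes $E(K)_{\mathrm{tor}}$---isomorphic to $C_2\oplus C_4$ over $\mathbb{Q}(i)$ and to $C_4$ over the other seven fields---and then checks by hand that each torsion point pulls back only to points with $uv=0$ or with $u,v$ differing by a unit (whence $w=0$). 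Your rank decomposition $\mathrm{rk}\,E(K)=\mathrm{rk}\,E(\mathbb{Q})+\mathrm{rk}\,E^{(D)}(\mathbb{Q})$ is a tidy way to organize the computation, but otherwise that last paragraph \emph{is} the proof. I would drop the descent entirely and present the elliptic-curve argument directly.
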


\begin{proof}

Let C be the (affine) curve with equation:

$$u^4-v^4 = w^2$$

and let E be the elliptic curve:

$$y^2 = x^3+4x$$

The curves E and C are birational via:

$$f: C \to E$$ 
 $$(u,v) \mapsto (x,y) = \left( \frac{2v^2}{u^2-w}, \frac{4uv}{w-u^2} \right)$$

Hence, in order to determine the K-rational points on C, it suffices to consider the non-regular points of f and $f^{-1}(E(K))$.  At a non-regular point (u,v,w) we have $w=u^2$ and hence $v=0$.  Now using Magma we find the rank of $E(K)$ is 1 if $D=-7$ and 0 otherwise.  Also, $E(K)_{tor} \approx C_2 \oplus C_4$ if $D=-1$ and otherwise $E(K)_{tor} \approx C_4$.  Furthermore, the torsion points over K=Q(i) are $(0,0), (2,\pm 4), (\pm 2i,0), (-2,\pm 4i)$.  If y = 0 then we see u=0 or v=0.  If $x= \pm 2$ and $y=4u_0$ where $u_0$ is a unit in K, then it follows that either v=0 or v and u differ by a unit, in which case $w^2 = u^4-v^4 = 0$ so w = 0.  In more detail, if  $\frac{2v^2}{u^2-w} = \pm2$ and $\frac{4uv}{w-u^2} = 4u_0$ then $ \pm v^2 = u^2-w$ and $ u_1uv = u^2-w $ for some unit $u_1$ in K.  Hence $\pm v^2 = u_1uv$ so $\pm v(v-u_1u)=0$.  Hence $v=0$ or $v=u_1u$.  Therefore the curve C has no nontrivial solutions over K.  For D $\not = $ -1,-7 we have E(K)$= \{(0,0), (2,\pm 4)\}$ and the same argument shows these points do not yield nontrivial solutions over K.

\end{proof}
The extra points in the $D=-7$ case give rise to growth that doesn't occur in the $D=-1,-3$ cases. For example, the curve E defined over $K = \mathbb{Q}(\sqrt{-7})$ by:
$$y^2=x(x+u^4)(x+v^4)$$
where $u = \frac{1}{2}-\frac{3}{2}\sqrt{-7} $ and $v=-3- \sqrt{-7}$ has $E(K) \approx C_2 \oplus C_8$
but over $ L = K(\sqrt{\frac{465}{2}+\frac{45}{2}\sqrt{-7}})$  we have $E(L) \approx C_2 \oplus C_{16}$.

\begin{prop}
\label{order2=order3}
Let K = Q$(\sqrt{D}$) $(D=-1,-2,-3,-7,-11)$ and let s,t,d $\in \mathcal{O}_K$ with $s,t,d \ne 0$ and $s^2 \ne t^2$.  Then there do not exist  a,b $\in \mathcal{O}_K$ such that
$$ds^2=a^3(a+2b)c^2$$
$$dt^2 = b^3(b+2a)c^2$$
\end{prop}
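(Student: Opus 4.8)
The plan is to collapse the two equations into a single condition defining a $K$-rational point on one fixed elliptic curve, and then to rule that out by a Mordell--Weil computation. Suppose $a,b\in\mathcal{O}_K$ solved the system. Since $d,s,t\neq0$, the quantities $a,b,c,\ a+2b,\ 2a+b$ are all nonzero. (By \cref{Ono}(3) the system is exactly the statement that the $d$-twist of $F\colon y^{2}=x(x+s^{2})(x+t^{2})$ acquires a point of order $3$; as $F$ has full $2$-torsion with $(0,0)$ divisible by $2$, so that $C_{2}\oplus C_{4}\hookrightarrow F(K)$, the proposition says that over these five fields no such curve has a quadratic twist whose torsion contains $C_{2}\oplus C_{6}$.)

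Multiplying the two equations and dividing by $a^{2}b^{2}c^{4}$ gives $ab(a+2b)(2a+b)=\bigl(dst/(abc^{2})\bigr)^{2}$; dividing once more by $b^{4}$ yields $\tau(\tau+2)(2\tau+1)\in(K^{*})^{2}$, where $\tau:=a/b\in K^{*}$. Since $X(X+1)(X+4)=4\tau(\tau+2)(2\tau+1)$ when $X=2\tau$, this produces the $K$-point $P=\bigl(2\tau,\ 2dst/(ab^{3}c^{2})\bigr)$ on $E_{0}\colon Y^{2}=X(X+1)(X+4)$. I would then read off the excluded values of $\tau$: $a\neq0$ gives $\tau\neq0$; $a+2b\neq0$ and $2a+b\neq0$ give $\tau\neq-2,-\tfrac12$; and $s^{2}\neq t^{2}$, equivalent to $a^{3}(a+2b)\neq b^{3}(2a+b)$, i.e.\ to $(a-b)(a+b)^{3}\neq0$, gives $\tau\neq\pm1$. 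Hence $P\neq O$ and $X(P)=2\tau\notin\{0,-1,-4,2,-2\}$.

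It remains to show this is impossible, i.e.\ that every $K$-point of $E_{0}$ has $X\in\{0,-1,-4,2,-2\}$ or is $O$. Over $\Q$, $E_{0}$ has rank $0$ and $E_{0}(\Q)=E_{0}(\Q)_{\tors}\cong C_{2}\oplus C_{4}$: its $2$-torsion lies over $X\in\{0,-1,-4\}$ and its four order-$4$ points $(2,\pm6),(-2,\pm2)$, each doubling to $(0,0)$, over $X\in\{2,-2\}$. For each $K=\Q(\sqrt D)$ with $D\in\{-1,-2,-3,-7,-11\}$ I would verify in Magma that $\rk E_{0}(K)=0$ and $E_{0}(K)_{\tors}\cong C_{2}\oplus C_{4}$; equivalently, since $\rk E_{0}(K)=\rk E_{0}(\Q)+\rk E_{0}^{D}(\Q)$, that each quadratic twist $E_{0}^{D}\colon y^{2}=x(x+D)(x+4D)$ has rank $0$ over $\Q$ (non-growth of the torsion can also be checked by hand from \cref{lifting lemma} together with \cref{Kamienny} and \cref{Najman}, the only delicate case being a potential point of order $3$, which one excludes by factoring the $3$-division polynomial $3X^{4}+20X^{3}+24X^{2}-16$ over $K$). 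Then $E_{0}(K)=E_{0}(\Q)$, contradicting the previous paragraph, so no solution $(a,b)$ exists.

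The algebra and the bookkeeping of excluded $\tau$'s are routine; the substantive step — and the expected obstacle — is the rank computation: verifying that the conductor-$48$ curve $E_{0}$ has rank $0$ over these five imaginary quadratic fields (equivalently, that $E_{0}$ and its twists by $-1,-2,-3,-7,-11$ all have rank $0$ over $\Q$). This is a finite $2$-descent/analytic calculation carried out in Magma; it could in principle be obstructed if the relevant Tate--Shafarevich groups had large $2$-part, but for curves this small it goes through.
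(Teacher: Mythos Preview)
Your argument is correct and is essentially the same as the paper's: both multiply the two equations, divide through to obtain $\tau(\tau+2)(2\tau+1)\in (K^*)^2$ with $\tau=a/b$, substitute $X=2\tau$ to land on the elliptic curve $Y^2=X(X+1)(X+4)$, and then invoke a Magma computation showing $\rk=0$ and $E_0(K)_{\tors}\cong C_2\oplus C_4$ over each of the five fields, forcing $\tau\in\{-2,-1,-\tfrac12,0,1\}$. Your extra remarks (the reduction to rank of the quadratic twists $E_0^D/\Q$, and the torsion check via the $3$-division polynomial) are not in the paper but are reasonable alternative verifications of the same computational step.
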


\begin{proof}
It's easy to check $\frac{a}{b} \not\in \{-2,-1,-\frac{1}{2}, 0, 1\}$ as otherwise a hypothesis is contradicted.  Let K = Q$(\sqrt{D}$) (D=-1,-2,-3,-7,-11) and let s,t,d $\in K$ with s,t,d$ \ne 0$, $s^2 \ne t^2$ .  Suppose there exists a,c $\in K$ such that
$$ds^2=a^3(a+2b)c^2$$
$$dt^2 = b^3(b+2a)c^2$$ Taking products and dividing both sides by $a^2c^4b^4$ yields:

$$  (\frac{dst}{ab^2c^2})^2 =w(w+2)(1+2w) $$  where $w := \frac{a}{b}$.  Multiplying by 4 yields:  
$$  (\frac{2dst}{ab^2c^2})^2 =2w(2w+4)(2w+1) $$

Hence we a get a K-point on the elliptic curve E with equation

$$ y^2 = x(x+4)(x+1) = x^3+5x^2+4x $$

Magma tells us E(K) has rank 0 and Sage tells us E(K)$_{tor}$
$\approx C_2 \oplus C_4$.  
so E(K)$=E(K)_{tor} = \{ (-4,0), (-2 , \pm 2 ),  (-1 , 0), (0 ,0 ),  (2 , \pm 6),  \infty \} $.  Hence  $2w = -4, \pm 2, -1$ or $0$ so $w=-2,\pm 1$ or $0$, a contradiction.

\end{proof}

\begin{prop}
\label{modifiedorder2=order3}
Let K = Q$(\sqrt{D}$) (D=-1,-3) and let s,t $\in \mathcal{O}_K$ with s,t,d$ \ne 0$, $s^2 \ne t^2$.  Then there do not exist  a,b $\in \mathcal{O}_K$ such that
$$-s^2=a^3(a+2b)c^2$$
$$t^2 = b^3(b+2a)c^2$$
\end{prop}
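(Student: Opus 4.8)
The plan is to mirror the proof of \cref{order2=order3}, tracking the effect of the sign change. Suppose toward a contradiction that such $a,b\in\mathcal{O}_K$ exist; note $b\neq 0$ (else $t=0$) and $a\neq 0$ (else $s=0$), so put $w:=a/b\in K^{\times}$. First I would eliminate the degenerate values of $w$ as in \cref{order2=order3}: if $w\in\{0,-2\}$ then $a(a+2b)=0$ forces $s=0$; if $w=-\tfrac12$ then $b+2a=0$ forces $t=0$; if $w=1$ then $-s^2=t^2=3a^4c^2$, so $3$ would be a square in $K$; and if $w=-1$ then $t^2=-b^4c^2$, so $-1$ would be a square in $K$. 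Since neither $3$ nor $-1$ is a square in $\mathbb{Q}(\sqrt{-3})$ and $3$ is not a square in $\mathbb{Q}(i)$, these cases are impossible — the lone exception being $w=-1$ over $\mathbb{Q}(i)$, which is excluded because the pair $(a,b)$ comes from the parametrization of \cref{Ono}(3), where $a/b\notin\{-2,-1,-\tfrac12,0,1\}$ is assumed. Thus I may take $w\notin\{-2,-1,-\tfrac12,0,1\}$.

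Next, multiplying the two given relations and writing $a=wb$, so that $a+2b=b(w+2)$ and $b+2a=b(1+2w)$, gives
$$-\,s^2t^2 \;=\; a^3b^3(a+2b)(b+2a)\,c^4 \;=\; w^3(w+2)(1+2w)\,b^8c^4 .$$
Dividing by $w^2b^8c^4$ and substituting $x=-2w$ transforms this into $\bigl(\tfrac{2st}{wb^4c^2}\bigr)^2=x(x-1)(x-4)$, so we obtain a $K$-rational point on the elliptic curve $E':y^2=x(x-1)(x-4)=x^3-5x^2+4x$ whose $x$-coordinate is $-2a/b$. Now $E'$ is precisely the $(-1)$-quadratic twist of the curve $E'':y^2=x(x+4)(x+1)$ appearing in \cref{order2=order3}. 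Over $\mathbb{Q}(i)$ the number $-1$ is a square, so $E'\cong E''$ and hence $E'(\mathbb{Q}(i))$ has rank $0$ with torsion $C_2\oplus C_4$, its points having $x$-coordinates in $\{-2,0,1,2,4\}$ (transported from the explicit list in the proof of \cref{order2=order3}). Over $\mathbb{Q}(\sqrt{-3})$ I would use Magma to confirm $E'(\mathbb{Q}(\sqrt{-3}))$ has rank $0$ and let Sage compute the torsion, which should be $C_2\oplus C_2$ with $x$-coordinates $\{0,1,4\}$. In either case every $K$-point of $E'$ has $x$-coordinate in $\{-2,0,1,2,4\}$, forcing $a/b=-x/2\in\{-2,-1,-\tfrac12,0,1\}$ — contrary to the previous paragraph. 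This is the desired contradiction.

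The step I expect to fight with is the pair of borderline cases $a/b=\pm1$. Unlike in \cref{order2=order3}, the raw hypotheses ($s,t,c\neq 0$, $s^2\neq t^2$) do \emph{not} by themselves kill these: over $\mathbb{Q}(i)$ the choice $a=-1$, $b=c=s=1$, $t=i$ satisfies every stated hypothesis, so as literally phrased the proposition needs the extra input $a/b\notin\{-2,-1,-\tfrac12,0,1\}$ inherited from the Ono parametrization (or some further property of $s,t$ supplied by the application). The other non-elementary ingredient is the rank-$0$ claim over $\mathbb{Q}(\sqrt{-3})$: it rests on a machine-assisted descent on $E'$, and the argument would have to be revisited if that rank turned out to be positive, or if $E'$ acquired torsion with a new $x$-coordinate over $\mathbb{Q}(\sqrt{-3})$, since then one would have to trace such a $w$ back through the system to see whether it yields admissible $s,t,c$.
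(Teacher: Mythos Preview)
Your overall shape matches the paper's: multiply the two relations, land on the curve $E':y^2=x(x-1)(x-4)$, and rule out the resulting $x$-values. For $D=-1$ the paper takes a shortcut you missed—since $-1=i^2$, rewrite $-s^2=(is)^2$ and invoke \cref{order2=order3} directly—so your worry about $w=-1$ over $\mathbb{Q}(i)$ is a sharp observation about the literal statement (the paper's own reduction tacitly needs $-s^2\neq t^2$ rather than $s^2\neq t^2$ at that point), though in the applications only the $D=-3$ case is actually invoked.

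The genuine gap is over $K=\mathbb{Q}(\sqrt{-3})$: your expectation that $E'(K)_{\tors}\cong C_2\oplus C_2$ is wrong. In fact $(1,0)$ lifts, since $1-0=1$, $1-1=0$, $1-4=-3$ are all squares in $K$; the paper finds $E'(K)_{\tors}\cong C_2\oplus C_4$, with the order-$4$ points having $x$-coordinate $1\pm\sqrt{-3}$. Thus the possible values of $x=-2w$ are $\{0,1,4,\,1\pm\sqrt{-3}\}$, giving $w\in\{0,-\tfrac12,-2,\lambda,\lambda^2\}$ with $\lambda$ a primitive cube root of unity. You have excluded the first three, but $w=\lambda$ and $w=\lambda^2$ need a separate argument: the paper substitutes $a=\lambda b$ into the second relation to obtain $t^2=b^4(1+2\lambda)$, then notes $N_{K/\mathbb{Q}}(1+2\lambda)=3$ is a nonsquare in $\mathbb{Q}$, so $1+2\lambda$ is not a square in $K$ (and similarly for $\lambda^2=\overline{\lambda}$). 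Without this norm step your argument does not close.
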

\begin{proof}

Note that when $D=-1$, $-1$ is a square in K so this system has no solutions by \cref{order2=order3} (with d=1).  Hence we may assume $D=-3$.  It's now easy to check $\frac{a}{b} \not\in \{-2,-1,-\frac{1}{2}, 0, 1\}$ as otherwise a hypothesis is contradicted (or $-1$ is a square in K).  
Let s,t $\in K$ with s,t$ \ne 0$, $s^2 \ne t^2$ .  Suppose there exists a,b,c $\in K$ such that
$$-s^2=a^3(a+2b)c^2$$
$$t^2 = b^3(b+2a)c^2$$

As in the proof of \cref{order2=order3} we obtain

$$  (\frac{2st}{ab^2c^2})^2 =-2w(-2w-4)(-2w-1) $$

where $w := \frac{a}{b}$.  Hence we a get a K-point on the elliptic curve E with equation

$$ y^2 = x(x-4)(x-1) = x^3-5x^2+4x $$

Magma tells us E(K) has rank 0 and E(K)$_{tor} \approx C_2 \oplus C_4$ so $E(K) = \{(1-\alpha, -3-\alpha), (1-\alpha, -3+\alpha),(1+\alpha, -3+\alpha),(1+\alpha, 3-\alpha), (0,0), (4,0), (1,0)\}$ where $\alpha^2=-3$.  Hence $-2w = 0,1,4, 1\pm \alpha $
so $w = 0,-\frac{1}{2}, -2, \lambda$ or $\lambda^2$ where $\lambda = -\frac{1}{2}+\frac{\sqrt{-3}}{2}$ is a primitive cube root of unity.  We have already noted that $w \not = 0,-\frac{1}{2}, -2$.   If $w=\lambda$ then $a=\lambda b$ so  

$$s^2 =  b^3(b+2a) = b^3(b+2b\lambda)= b^4(1+2\lambda)$$ 
Hence  $1+2\lambda$ must be a square in K, but $N(1+2\lambda) = (1+2\lambda)(1+2\lambda^2) = 3$, which is not a square in $\mathbb{Q}$.  Hence there is no solution to the equation above with $w=\lambda$ and as $\overline{\lambda} = \lambda^2$ the case $w=\lambda^2$ follows similarly.

\end{proof}

\begin{prop}
\label{order3=order3}

Let K = Q$(\sqrt{D}$) ($D=-1,-3$) and let $a,b,c, a_0,b_0,c_0,d \in \mathcal{O}_K$ with d a non-square in K and $\frac{a}{b},\frac{a_0}{b_0} \not\in \{-2,-1, -1/2, 0,1  \}$.  Then the following system of equations has no $\mathcal{O}_K$-solutions:

$$   da^3(a+2b)c^2 =a_0^3(a_0+2b_0)c_0^2$$ 
$$db^3(b+2a)c^2 = b_0^3(b_0+2a_0)c_0^2$$

\end{prop}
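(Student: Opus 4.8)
The plan is to read the system as the assertion that an elliptic curve $E/K$ and one of its non-trivial quadratic twists $E^{d}$ both acquire a rational point of order $3$, and then to reduce this to locating $K$-points on a single elliptic curve, in the spirit of \cref{order2=order3} and \cref{modifiedorder2=order3}. We may assume $c,c_{0}\neq0$ (otherwise both sides of the system vanish, forcing the trivial solution $c=c_{0}=0$ or one of the excluded ratios). By \cref{Ono} the curve $E\colon y^{2}=x(x+M)(x+N)$ with $M=a^{3}(a+2b)c^{2}$, $N=b^{3}(b+2a)c^{2}$ has a rational point of order $3$, and the system says exactly that its twist $E^{d}\colon y^{2}=x(x+dM)(x+dN)$ does as well. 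Setting $w=a/b$ and $w_{0}=a_{0}/b_{0}$ — which lie in $K^{\times}\setminus\{-2,-1,-\tfrac12,1\}$ by hypothesis — and dividing the two equations yields
\[
\frac{w^{3}(w+2)}{2w+1}=\frac{w_{0}^{3}(w_{0}+2)}{2w_{0}+1},
\]
which we abbreviate $f(w)=f(w_{0})$ with $f(w)=w^{3}(w+2)/(2w+1)$; meanwhile the second equation gives $d\,(2w+1)(2w_{0}+1)\in(K^{\times})^{2}$, so since $d$ is a non-square, $(2w+1)(2w_{0}+1)\notin(K^{\times})^{2}$, and in particular $w\neq w_{0}$.

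Next I would study the curve $f(w)=f(w_{0})$: clearing denominators, the numerator is divisible by $w-w_{0}$, and in the symmetric coordinates $s=w+w_{0}$, $p=ww_{0}$ the remaining factor is $Q(s,p)=2p^{2}-2(s^{2}+s-1)p-s^{2}(s+2)$. Thus $(s,p)$ lies on $Q=0$; since $Q$ is quadratic in $p$ with discriminant $4(s^{4}+4s^{3}+3s^{2}-2s+1)$ and this quartic in $s$ is squarefree, $Q=0$ is birational over $K$ to the genus-one curve
\[
\mathcal{C}\colon\quad t^{2}=s^{4}+4s^{3}+3s^{2}-2s+1,
\]
which has the $K$-rational point $(s,t)=(0,1)$ and hence is an elliptic curve $E'$ over $K$ (with a rational point of order $6$). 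Under this map $p=\tfrac12\bigl((s^{2}+s-1)+t\bigr)$, and $w,w_{0}$ are the two roots of $z^{2}-sz+p$, so they lie in $K$ precisely when $s^{2}-4p$ is a square in $K$.

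Finally I would use Magma to check that $E'(K)$ has rank $0$ for both $K=\mathbb{Q}(\sqrt{-3})$ and $K=\mathbb{Q}(i)$, so that $E'(K)=\mathcal{C}(K)$ is finite; enumerating it, one recovers for each point the value $(s,p)$ and the at most two candidate pairs $(w,w_{0})$. The claim to verify is that in every case either $\{w,w_{0}\}\not\subseteq K$, or $w$ or $w_{0}$ lies in $\{-2,-1,-\tfrac12,0,1\}$ (these are the ``obvious'' solutions, coming from the critical points of $f$, where $f(0)=f(-2)=0$ and $f(-1)=f(1)=1$), or $(2w+1)(2w_{0}+1)\in(K^{\times})^{2}$ — the last contradicting that $d$ is a non-square. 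The identity $(4p_{+}+2s+1)(4p_{-}+2s+1)=-3$, relating the two $p$-values $p_{\pm}$ above a fixed $s$, is convenient here, since $-3$ is a square in $\mathbb{Q}(\sqrt{-3})$.

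I expect the main obstacle to be exactly this last stage: the rank computation for $E'$ over the two quadratic fields (the only ingredient that is not elementary), together with the finite but slightly delicate bookkeeping needed to rule out every $K$-point of $E'$. Over $\mathbb{Q}(\sqrt{-3})$ there is a conceptual shortcut — the mod-$3$ cyclotomic character is trivial there, so a curve with a rational $3$-torsion point has unipotent mod-$3$ Galois representation, and a non-trivial quadratic twist of it cannot again have a rational $3$-torsion point — but the argument above treats both base fields uniformly.
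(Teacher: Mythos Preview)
Your approach is correct and lands on the same elliptic curve as the paper, but you reach it by a cleaner route. The paper divides the two equations just as you do, factors out $x-y$ from the resulting symmetric quartic in $(x,y)=(w_{0},w)$, and obtains the genus-$1$ curve
\[
p_{C}(x,y)=2x^{3}y+2xy^{3}+x^{3}+y^{3}+5x^{2}y+5xy^{2}+2x^{2}+2y^{2}+2x^{2}y^{2}+2xy=0.
\]
It then hands $p_{C}$ to Magma, which produces a Weierstrass model $E_{C}$ (birational to $y^{2}=x^{3}+1$) together with an explicit birational map $f\colon \overline{C}\to E_{C}$; after checking that $E_{C}(K)$ has rank $0$ for both $K$, the paper inverts $f$ point-by-point via Gr\"obner bases and verifies that every preimage has $x$ or $y$ in $\{-2,-1,-\tfrac12,0,1\}$.

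Your symmetric-function substitution $s=w+w_{0}$, $p=ww_{0}$ is exactly quotienting $p_{C}=0$ by the involution $(w,w_{0})\mapsto(w_{0},w)$, and your $Q(s,p)=0$ is $-p_{C}$ rewritten in $s,p$; solving for $p$ gives your quartic model $t^{2}=s^{4}+4s^{3}+3s^{2}-2s+1$ of the \emph{same} elliptic curve. So the rank computation you need is identical to the paper's. What you gain is an explicit model and an explicit map without recourse to a black box, plus the pleasant identity $(4p_{+}+2s+1)(4p_{-}+2s+1)=-3$ for the final square-class check. What the paper's direct approach buys is that one never has to worry about the extra lifting condition ($s^{2}-4p$ a square in $K$) needed to recover $(w,w_{0})$ from $(s,p)$: working on $C$ rather than its quotient, every $K$-point already gives a pair $(w,w_{0})\in K^{2}$. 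Either bookkeeping finishes the argument; your final enumeration step is left as a claim to verify, and once the rank is confirmed to be $0$ it is indeed a finite check against the $6$ (for $D=-1$) or $12$ (for $D=-3$) torsion points.
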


\begin{proof}

Suppose a solution to the following system of equations exists satisfying the hypotheses above: 
$$   da^3(a+2b)c^2 =a_0^3(a_0+2b_0)c_0^2$$ 
$$db^3(b+2a)c^2 = b_0^3(b_0+2a_0)c_0^2$$

Note that we must have $a,b,a+2b \ne 0$ so we may divide:

$$ \left( \frac{a}{b}\right)^3 \frac{a+2b}{b+2a} = \left(\frac{a_0}{b_0} \right)^3 \frac{a_0+2b_0}{b_0+2a_0}$$

and letting $x =\frac{a_0}{b_0}$ and $y = \frac{a}{b}$ we get

$$ y^3 \frac{y+2}{1+2y} = x^3 \frac{x+2}{1+2x}$$

$$x^4+2x^3+2yx^4+4yx^3 - y^4-2y^3-2xy^4-4xy^3=0$$

Magma factored the polynomial on the l.h.s. into absolutely irreducible polynomials:

$$(x-y)(2x^3y+2xy^3+x^3+y^3+5x^2y+5xy^2+2x^2+2y^2+2x^2y^2+2xy)$$

Now if $x=y$, it follows that there is an $e \in K$ such that $a=be, a_0 = b_0e$.  
Substituting this into the system of equations above yields that $d$ is a square (a contradiction).  Hence there exists a K-rational point $(x,y)$ (with $x,y \not\in \{-2, \frac{-1}{2}, -1, 0, 1\}$)  on the curve C defined by $p_C(x,y)=0$ where
$$p_C = 2x^3y+2xy^3+x^3+y^3+5x^2y+5xy^2+2x^2+2y^2+2x^2y^2+2xy $$
Let $\overline{C}$ denote the projective closure of C.  Magma tells us $\overline{C}$ has genus 1 and is birational to the elliptic curve $E_C$ with defining equation 
 $$y^2+2xy+2y = x^3-x^2-2x$$
 (In fact, $E_C$ is birational to the curve with equation $y^2 = x^3+1$). Magma also provided a birational map:
 $$f: \overline{C} \to E_C$$ 
 $$(x,y,z) \mapsto (p^f_1,p^f_2,p^f_3)$$

 $p^f_1 = 2x^2y^2 + 3x^2yz + 4xy^2z - y^3z + x^2z^2 + 6xyz^2 - y^2z^2 + 2xz^3$\\
 
$p^f_2 = 2 x^2y^2 + 4xy^3 - x^2yz + 10xy^2z + 3y^3z - x^2z^2 + 7y^2z^2 -2xz^3 + 2yz^3$\\

$p^f_3 = y^4 + 3y^3z + 3y^2z^2 + yz^3$\\

Because f is a rational map, the only potential K-rational points of $E_C$ are the non-regular points of $E_C$ and $f^{-1}(E_C(K))$. 

Suppose $p^f_3=0$.  If z=0 then y=0 and if z=1 then y = 0 or -1.  Hence any non-regular point of f is of the form [x,0,0],[x,0,1] or [x,-1,1].  Evaluating $p^f_2=0$ at the last two points yields the following possibilities for non-regular points of f:  [1,0,0], [0,0,1], [-2,0,1], [-1,-1,1].  All such points do not give 'non-trivial' points on the curve C:  [1,0,0] does not correspond to a point on C and the other points fail to produce points (x,y) on C with $x,y \not\in \{ -2,-1,-1/2,0,1\}$.

In order to compute $f^{-1}(E_C(K))$ we must first compute $E_C(K)$.  Magma tells us $E_C(K)$ has rank 0 over  both quadratic fields.  See \cref{E_C} for a description of the torsion points.

\begin{table}
\caption{ K-Rational Points on $E_C$}

\begin{center}
    \begin{tabular}{| l | l | p{9cm} |}
    \hline
    \label{E_C}
    K &   $E_C(K)_{tor}$ & Points of $E_C(K)_{tor}$\\ \hline
    $\mathbb{Q}(\sqrt{-1})$ &  $ C_6$ & $[0,1,0], [-1,0,1], [0,-2,1], [0,0,1], [2,-6,1], [2,0,1]$ \\ \hline

$\mathbb{Q}(\sqrt{-3})$  & $C_2 \oplus C_6$ & 
     $[0,1,0], [-1,0,1], [0,-2,1], [0,0,1], [2,-6,1],[2,0,1],$ $[-1-\alpha,-3+\alpha,1], [-1-\alpha, 3+\alpha,1], \newline [-1+\alpha, -3-\alpha,1], [-1+\alpha, 3-\alpha,1], \newline [\frac{1}{2} (1-\alpha), \frac{1}{2}(-3+\alpha),1],  [\frac{1}{2} (1+\alpha), \frac{1}{2}(-3-\alpha),1]$\\ \hline
    \end{tabular}
\end{center}
\end{table}

To compute $f^{-1}([x,y,z])$, we form the ideal $<p_C, p^f_1-wx, p^f_2-wy,p^f_3-wz>$ and compute it's
Gr{\"o}bner basis.  Often, one can find basis elements that allow the system to be solved by hand (see \cref{GroebnerTable}).
\begin{table}
\caption{ Gr{\"o}bner basis data for determination of $f^{-1}(E(K))$ }
\begin{center}
    \begin{tabular}{| l | l | l |  p{10cm} |}
    \hline
    \label{GroebnerTable}
    
    Point P of E(K) & $f^{-1}(P)$ & Gr{\"o}bner basis elements \\ \hline
    [0,1,0] & [1,-1,1] & $xw - zw, yw + zw$  \\ \hline
    
    [-1,0,1] & [-1,1,1]  & $yw^2 - zw^2,xw + zw,$ \\ \hline
    [0,-2,1] &    $\emptyset$      &  $w^2$ \\ \hline
    [0,0,1]  &     [0,1,0]         & $xw,yw$ \\ \hline
    [2,-6,1] &   $\emptyset$   &   $w^2$ \\ \hline
    [2,0,1]  &  [0,-2,1]  &    $xw,yw^2 + 2zw^2 $ \\ \hline

    $[-1-\alpha,-3+\alpha,1]$  & $ [\alpha-1,2,0]$  & $xw + \frac{1}{2}(-q + 1)yw,zw$  \\ \hline
    $[-1+\alpha, -3-\alpha,1]$ &  $ [-\alpha-1,2,0]$  &   \\ \hline
    $[-1-\alpha, 3+\alpha,1]$  & $\emptyset$ & $w^2$      \\ \hline
    $[-1+\alpha, 3-\alpha,1]$  &  $\emptyset$  &   \\ \hline
    $[\frac{1}{2} (1-\alpha), \frac{1}{2}(-3+\alpha),1]$  &  $\emptyset$ & $w^2$  \\ \hline
    $[\frac{1}{2} (1+\alpha), \frac{1}{2}(-3-\alpha),1]$& $\emptyset$  &   \\ \hline

    \end{tabular}
\end{center}
\end{table}

Now we see each point of $f^{-1}(E_C(K))$ either does not correspond to a point on C, or fails to satisfy $x,y \not\in \{ -2,-1,-1/2,0,1\}$. 

\end{proof}


\section{Classification of Twists}

\label{Classification of Twists}

\begin{thm}
\label{twist}
Let K $= \mathbb{Q}(\sqrt{D}$) ($D = -1,-3$), 
d $\in$ K a non-square, and E/K an elliptic curve with full 2-torsion.

\begin{enumerate}

\item If $E(K)_{tor} \approx$ $C_2 \oplus C_8$ then $E^d(K)_{tor} \approx C_2 \oplus C_2$.

\item If $E(K)_{tor} \approx$ $C_2 \oplus C_6$ then $E^d(K)_{tor} \approx C_2 \oplus C_2$.

\item If $E(K)_{tor} \approx$ $C_4 \oplus C_4$ then $K = \mathbb{Q}(\sqrt{-1})$ and $E^d(K)_{tor} \approx C_2 \oplus C_2$.

\item If $E(K)_{tor} \approx$ $C_2 \oplus C_4$ then WLOG we may write $M=s^2$ and $N=t^2$. We have $E^d(K)_{tor} \approx C_2 \oplus C_2$ unless $K = \mathbb{Q}(\sqrt{-3}), d=-1,$ and there exists  v 
$\in$ $\mathcal{O}_K$ such that $s^2-t^2 = \pm v^2$, in which case $E^d(K)_{tor} \approx C_2 \oplus C_4$.

\item If $E(K)_{tor} \approx$ $C_2 \oplus C_2$ then $E^d(K)_{tor} \approx C_2 \oplus C_2$ for almost all d.  If for some d-twist,  $E^d(K)_{tor} \approx C_2 \oplus C_8$, $C_2 \oplus C_6$ or $C_4 \oplus C_4$ then it is the unique nontrivial twist of E. If for some d-twist,  $E^d(K)_{tor} \approx C_2 \oplus C_4$ then there is at most one other nontrivial twist of E and it is also isomorphic to $C_2 \oplus C_4$.

\end{enumerate}

\end{thm}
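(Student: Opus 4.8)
The plan is to reduce everything, via \cref{Ono} and \cref{lifting lemma}, to deciding when the twisted curve $E^d\colon y^2=x(x+dM)(x+dN)$ acquires a point of order $3$ or of order $4$. The key preliminary observation is that $E^d$ again has full $2$-torsion (its $2$-torsion points have $y=0$ and $x$-coordinates $0,-dM,-dN\in\mathcal O_K$) and that $E^d(K)_{tor}\in\Phi_K(1)$, so by \cref{Najman} (using \cref{torsionoverc} to fix $E^d(K)[2]=C_2\oplus C_2$) the group $E^d(K)_{tor}$ is one of $C_2\oplus C_2$, $C_2\oplus C_4$, $C_2\oplus C_6$, $C_2\oplus C_8$, together with $C_4\oplus C_4$ only when $K=\mathbb{Q}(\sqrt{-1})$. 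Hence $E^d(K)_{tor}$ is pinned down by the two questions ``does $E^d$ have a point of order $3$?'' and ``\dots\ of order $4$?'', which by \cref{Ono} translate, respectively, into the solvability of $dM=\alpha^3(\alpha+2\beta)\gamma^2$, $dN=\beta^3(\beta+2\alpha)\gamma^2$ with $\alpha/\beta$ outside $\{-2,-1,-\tfrac12,0,1\}$, and into the statement that one of the three pairs $(dM,dN)$, $(-dM,dN-dM)$, $(-dN,dM-dN)$ consists of two squares in $K$. Finally, a nonzero $3$- or $4$-torsion point of $E^d$ over $K$ pulls back to a torsion point of $E$ over $K(\sqrt d)$; since $E$ has only finitely many torsion points of order $3$ or $4$ and $E(K)$ contains none of order $3$, only finitely many square classes $d$ can give $E^d(K)_{tor}\ne C_2\oplus C_2$ — this is the ``almost all $d$'' clause of (5).

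\textbf{Parts (1)--(4).} Fix the model and substitute the shape of $(M,N)$ forced by \cref{Ono} for $E(K)$: $M=d_0^2u^4$, $N=d_0^2v^4$ with $(u,v,w)$ Pythagorean in case (1) (after translating so the order-$8$ point lies over $(0,0)$); $M=a^3(a+2b)d_0^2$, $N=b^3(b+2a)d_0^2$ with $a/b$ admissible in case (2); $M=s^2$, $N=t^2$ in cases (3) (where $K=\mathbb{Q}(\sqrt{-1})$ already by \cref{Najman}) and (4). In cases (1),(3),(4) the order-$3$ criterion for $E^d$ becomes, after cancelling the ubiquitous squares, exactly \cref{order2=order3} (with the twist playing the role of ``$d$'' and $s,t$ essentially $d_0u^2,d_0v^2$ or $\sqrt M,\sqrt N$), and the three branches of the order-$4$ criterion force $d$ or $-d$ to be a square: over $\mathbb{Q}(\sqrt{-1})$ all three force $d$ square, a contradiction, while over $\mathbb{Q}(\sqrt{-3})$ the first branch forces $d$ square and the other two force $d\equiv-1$, after which the surviving ``square'' condition is a nontrivial instance of $u^4-v^4=w^2$ in cases (1) and (3), ruled out by \cref{Fermat}, and is precisely the stated exception $s^2-t^2=\pm v^2$ in case (4); in that exceptional case one checks by the same tools that $E^{-1}$ gains no point of order $8$ (the branches of \cref{Ono}(2) give either $\sqrt{-1}\in\mathbb{Q}(\sqrt{-3})$ or a nontrivial $u^4-v^4=w^2$) and none of order $3$ (\cref{order2=order3}, or \cref{modifiedorder2=order3}, according to the branch), so $E^{-1}(K)_{tor}=C_2\oplus C_4$. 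In case (2) the order-$3$ criterion becomes \cref{order3=order3}, and the three branches of the order-$4$ criterion, after cancelling squares and setting $x=a/b$, become $x(x+2)(2x+1)\in(K^{\times})^2$ (first branch) and $x(x-1)(x+1)(x+2)\in(K^{\times})^2$ (the other two, after sending a root to infinity); both of these quantities define the curve $Z^2=Z'(Z'+1)(Z'+4)$ of \cref{order2=order3}, whose $K$-rational points all put $x$ in the forbidden set — so no such $d$ exists and $E^d(K)_{tor}=C_2\oplus C_2$.

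\textbf{Part (5).} Assume $E(K)_{tor}=C_2\oplus C_2$ exactly. First, two distinct nontrivial twists cannot both carry a point of order $3$: writing out both Ono parametrizations and eliminating $M,N$ yields a nontrivial solution of the system of \cref{order3=order3} with the non-square parameter $d_1d_2$, a contradiction; so at most one square class $d$ makes $C_6\subseteq E^d(K)$. Second, by \cref{Ono}(1) and \cref{lifting lemma} a twist $E^d$ with a point of order $4$ has $d$ lying in one of the three square classes $M$, $-M$, $-N$, each subject to a compatibility relation among $M,N$; a short computation with square classes — exploiting that $-1$ is a square in $\mathbb{Q}(\sqrt{-1})$ but not in $\mathbb{Q}(\sqrt{-3})$ — shows that at most two of these classes (at most one over $\mathbb{Q}(\sqrt{-1})$) can simultaneously be compatible. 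Third, if some nontrivial twist reaches $C_2\oplus C_8$ or $C_4\oplus C_4$ then $dM,dN$ are both squares, and if it reaches $C_2\oplus C_6$ the order-$3$ data of \cref{Ono}(3) plays the analogous structural role; comparing this with the Ono data of any second nontrivial twist (a point of order $3$, or a point of order $4$ in each of its three branches) reduces to \cref{order2=order3}, \cref{order3=order3}, or — using the Pythagorean constraint attached to the order-$8$ point — to a nontrivial instance of $u^4-v^4=w^2$ killed by \cref{Fermat}; hence such a ``large'' twist is the unique nontrivial twist. Combining the three points gives (5): a $C_2\oplus C_6$ twist excludes all order-$4$ twists and is unique, and otherwise there are at most two nontrivial twists, all isomorphic to $C_2\oplus C_4$, with at most one strictly above $C_2\oplus C_4$.

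\textbf{Where the work is.} The content is almost entirely organizational rather than conceptual: every branch of every case ultimately collapses to a proposition of \cref{Some Diophantine} or to a nontrivial instance of \cref{Fermat}, so I expect no genuinely new Diophantine input; the one point worth isolating is that both $Y^2=x(x-1)(x+1)(x+2)$ and $Y^2=(2x+1)(1-x^2)$ are $K$-isomorphic to the curve $Z^2=Z'(Z'+1)(Z'+4)$ of \cref{order2=order3}, via the same ``send a root to infinity'' birational maps already used in \cref{Some Diophantine}. The real obstacle is keeping the case analysis exhaustive and internally consistent: each of the three criteria in \cref{Ono} comes in three symmetric branches that require explicit ``WLOG'' translations of the $2$-torsion points, and one must correctly track the square classes of $-1$, $d$, $M$, $N$ in each of the two fields at every step — an error in a single branch would manufacture or suppress a spurious twist, so the bookkeeping must be carried out with care.
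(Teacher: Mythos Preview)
Your proposal is correct and follows essentially the same route as the paper: both arguments translate the order-$3$ and order-$4$ criteria for $E^d$ via \cref{Ono} into the Diophantine systems of \cref{Some Diophantine}, invoking \cref{order2=order3}, \cref{modifiedorder2=order3}, \cref{order3=order3}, and \cref{Fermat} in exactly the places you indicate. The only cosmetic differences are that in case~(2) the paper handles the second and third order-$4$ branches by translating the $2$-torsion (so that $E(M,N)\approx E(-M,N-M)$ reduces them to the first branch and hence directly to \cref{order2=order3}) rather than by your explicit birational identification of the auxiliary curves with $Z^2=Z'(Z'+1)(Z'+4)$, and that for part~(5) the paper simply cites Kwon whereas you unwind the same ``apply (1)--(4) to the twisted curve'' argument by hand.
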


\noindent See \cite{Kwon}*{Thm 2} for the case $K = \mathbb{Q}$.

\begin{proof}

  Before we begin we note that in general, if $E$ and $E'$ are isomorphic elliptic curves and $d \in K$, then $E^d$ and $E'^d$ may only be isomorphic over $K(\sqrt{d})$.  But if $E$ and $E'$ are isomorphic via twist by a square in K, or by translation of the x-coordinate by an element of K, then one does in fact get an isomorphism of $E^d$ and $E'^d$ over K (and hence $E^d(K) \approx E'^d(K)$). 

1)  Suppose $E(K)_{tor} \approx$ $C_2 \oplus C_8$ .  Then WLOG we may assume there exists x,y,z $\in \mathcal{O}_K$ such that $M=x^4$, $N=y^4$ and $x^2+y^2 = z^2$. If $E^d(K)$ has a point of order 4 then by \cref{Ono} either (i) $dx^4,dy^4$ are both squares, or (ii) $-dx^4, dy^4-dx^4$ are squares, or (iii) $-dy^4, dx^4-dy^4$ are squares. In case (i) it follows that d is a square, a contradiction.  In case (ii), if $-dx^4$ is a square, then $d = -u^2$ for some u $\in \mathcal{O}_K$.  
If $dy^4-dx^4 = w^2$ for some $w \in \mathcal{O}_K$, then dividing by d yields $y^4-x^4 = -(wu^{-1})^2$ and multiplying by -1 we obtain
$x^4-y^4 = v^2$ for some $v \in \mathcal{O}_K $.  Note we have $x^4,y^4, x^4-y^4 \ne 0$ (as otherwise E is not smooth) and hence $x,y,v \ne 0$.
But this contradicts \cref{Fermat}, so $E^d(K)$ does not have a point of order 4.

Now looking above (or applying \cref{Ono}) we can write $M = s^2, N=t^2$ for some s,t $\in \mathcal{O}_K$. 
Hence if $E^d(K)$ has a point of order 3, there exists $a,b,c \in \mathcal{O}_K$ 
, $c \ne 0$ and $\frac{a}{b} \not\in \{-2,-1,-\frac{1}{2},0,1\}$ such that $ds^2=a^3(a+2b)c^2$ and $dt^2=b^3(b+2a)c^2$. But this contradicts \cref{order2=order3} so $E^d(K)$ does not have point of order 3.  Hence by  \cref{Najman} we conclude $E^d(K)\approx C_2 \oplus C_2 $.\\

2)  Suppose $E(K)_{tor} \approx$ $C_2 \oplus C_6$ .  Then we may assume  there exists $a,b,c \in \mathcal{O}_K$ with 
 $c \ne 0$ and $\frac{a}{b} \not\in \{-2,-1,-\frac{1}{2},0,1\}$ such that $M=a^3(a+2b)c^2$ and $N=b^3(b+2a)c^2$. If $E^d(K)$ has a point of order 4, then by Theorem \ref{Ono} 
(i) M, N are both squares (in $\mathcal{O}_K$), (ii) -M, N-M are both squares, or (iii) -N,M-N are both squares.  
Hence WLOG we can assume case (i) holds, so $dM =s^2, dN=t^2$ for some $s,t \in \mathcal{O}_K$ ($s,t \ne 0, s^2 \ne t^2$).  Hence 
$$     s^2=dM=da^3(a+2b)c^2 \mbox{ and } t^2=dN=db^3(b+2a)c^2$$
and multiplying by d:
$$     ds^2=a^3(a+2b)(cd)^2 \mbox{ and } dt^2=b^3(b+2a)(cd)^2$$
But this contradicts  \cref{order2=order3}.  Hence $E^d(K)$ does not have a point of order 4.  
See \cite{Kwon}*{Prop 4} for a similar argument in the case $K= \mathbb{Q}$.

Let K = Q$(\sqrt{D}$) ($D=-1$ or $-3$) and let d $\in \mathcal{O}_K$ be a nonsquare.  Suppose that E$(K)_{tor} \supseteq C_2 \oplus C_6$.  We will argue that the d-twist $E^d(K)_{tor} \not\supseteq C_3$.  We may assume (WLOG) that E is given by an equation of the form $$y^2 = x(x+M)(x+N)$$ where M,N $\in \mathcal{O}_K$. By Theorem \ref{Ono}, there exists $a,b,c \in \mathcal{O}_K$ with 
$\frac{a}{b} \not\in \{-2, \frac{-1}{2}, -1, 0, 1\}$ such that 
$$ M = a^3(a+2b)c^2 \mbox{ and } N= b^3(b+2a)c^2$$
Hence if E$_T^d(K) \supseteq C_3$ then there exists $ a_0,b_0,c_0 \in \mathcal{O}_K$  and $\frac{a_0}{b_0} \not\in \{-2, \frac{-1}{2}, -1, 0, 1\}$ such that 
$$   da^3(a+2b)c^2 = dM = a_0^3(a_0+2b_0)c_0^2$$ 
$$db^3(b+2a)c^2 = dN = b_0^3(b_0+2a_0)c_0^2$$
But this system has no solution by \cref{order3=order3}.
Hence we reach a contradiction and conclude  $E^d(K)_{tor} \not\supseteq C_3$.
Therefore by Theorem \ref{Najman} we conclude $E^d(K)\approx C_2 \oplus C_2 $.\\

4) Suppose $E(K)_{tor} \approx$ $C_2 \oplus C_4$ .  Then we may assume  WLOG that  there exists $s,t \in \mathcal{O}_K$ such that $M=s^2$ and $N=t^2$.  If $E^d(K)$ has a point of order 4, then either (i) dM, dN are both squares, (ii) -dM, dN-dM are both squares, or (iii)       -dN,dM-dN are both squares.  Case (i) is not possible since d is a nonsquare.  In case (ii) there exists u,v $\in$ $\mathcal{O}_K$ such that $-ds^2 = u^2$ and $dt^2-ds^2 = v^2$  so $d=-w^2$ for some $w$ $\in$ $\mathcal{O}_K$ 
Furthermore $D \ne -1$ as otherwise d is a square, so $K=\mathbb{Q}(\sqrt{-3})$. Hence $t^2-s^2 = z^2$ for some $z$ $\in$ $\mathcal{O}_K$. Case (iii) similarly yields that $s^2-t^2 = z^2$ for some $z$ $\in$ $\mathcal{O}_K$ and $d=-w^2$ for some $w$ $\in$ $\mathcal{O}_K$.  On other hand, if those condition are fulfilled then there is a point of order 4.

Hence if $K=\mathbb{Q}(\sqrt{-3})$ and $E(K)_{tor} \approx C_2 \oplus C_4$ then the 2-part of $E^d(K)$ can only be $C_2 \oplus C_2$, $C_2 \oplus C_4$ or $C_2 \oplus C_8$ by \cref{Najman} (clearly $E^d(K) \supseteq C_2 \oplus C_2$). But $C_2 \oplus C_8$ cannot occur by Case 1. 
Applying \cref{order2=order3} as in Case 2 shows that $E^d(K)$ has no point of order 3 and by  \cref{Najman} it has no point of order 5.\\


3)  Suppose $E(K)_{tor} \approx$ $C_4 \oplus C_4$.  Then we must have $K=\mathbb{Q}(\sqrt{-1})$.  On other other hand, as in the proof of 
Case 4 above, $E^d(K)$ cannot have a point of order 4.  Also as in the proof above, $E^d(K)$ cannot have a point order 3 or 5 so $E^d(K) \approx C_2 \oplus C_2$.\\


5) See the argument in \cite{Kwon}*{Section 4}.

\end{proof}

\section{Full 4-torsion}


\begin{lem}
\label{full4torsion1}
Let E be an elliptic curve defined over $K = \mathbb{Q}(\sqrt{D}) (D=-1, -3)$, $d \in  \mathcal{O}_K$ a nonsquare and let $L = K(\sqrt{d})$. 
If $E(K)_{tor} \approx  C_2 \oplus C_8$, then

\begin{itemize}

\item If D = -3, $C_4 \oplus C_4 \not\subseteq E(L)$.

\item If D = -1, there is a unique quadratic extension L of K such that $C_4 \oplus C_4 \subseteq E(L)$. WLOG, writing $M=x^4$ and $N=y^4$ where $x^2+y^2=z^2$, the extension is $L = K(\sqrt{x^2-y^2})$.

\end{itemize}

\end{lem}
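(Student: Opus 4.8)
The plan is to convert ``$C_4\oplus C_4\subseteq E(L)$'' into a statement about square classes in $L$ using \cref{lifting lemma}, and then to recognize the resulting equation as a disguised instance of $u^4-v^4=w^2$ so that \cref{Fermat} applies.

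First I would fix the model. Since $E(K)_{tor}\approx C_2\oplus C_8$, \cref{Ono}(2) provides, after a translation and a rescaling of the $x$-coordinate by a square of $K$ (operations defined over $K$, hence harmless for everything that follows), a model $y^2=x(x+M)(x+N)$ with $M=x^4$, $N=y^4$, $x^2+y^2=z^2$, where $x,y,z\in K$, $xy\neq 0$ and $x^4\neq y^4$ (smoothness forces $M,N,M-N\neq 0$). Because $E(\overline K)[4]\approx C_4\oplus C_4$ has order $16$, the inclusion $C_4\oplus C_4\subseteq E(L)$ is equivalent to $E(L)$ carrying full $4$-torsion, i.e.\ to each of $(0,0),(-M,0),(-N,0)$ lying in $2E(L)$. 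By \cref{lifting lemma} this means that $-x^4,-y^4,x^4-y^4,y^4-x^4$ are all squares in $L$ (the conditions $x^4,y^4\in(L^*)^2$ being automatic). Now $-x^4=-(x^2)^2$, so the first condition already forces $i\in L$, and once $i\in L$ the remaining three are all equivalent to $x^4-y^4\in(L^*)^2$. Thus
\[ C_4\oplus C_4\subseteq E(L)\ \Longleftrightarrow\ i\in L\ \text{ and }\ x^4-y^4\in(L^*)^2 . \]

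For $D=-3$ I would argue as follows. Here $i\notin K$, so ``$i\in L$'' forces $L=K(i)$ (one can also see $\mu_4\subseteq L$ directly from the Weil pairing); hence every quadratic extension of $K$ other than $K(i)$ is ruled out at once. For $L=K(i)=K(\sqrt{-1})$ the surviving requirement $x^4-y^4=(x^2-y^2)z^2\in(K(i)^*)^2$ is, since $z\neq 0$, equivalent to ``$x^2-y^2$ or $y^2-x^2$ is a square in $K$''. Combining such an equation $\pm(x^2-y^2)=s^2$ with $x^2+y^2=z^2$ gives $\{z^2-s^2,z^2+s^2\}=\{2x^2,2y^2\}$, hence $z^4-s^4=4x^2y^2=(2xy)^2$; after clearing denominators this is a nontrivial $\mathcal{O}_K$-solution of $u^4-v^4=w^2$, contradicting \cref{Fermat} (which applies since $\mathbb{Q}(\sqrt{-3})$ has class number one and $D\neq -7$). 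So $C_4\oplus C_4\not\subseteq E(L)$ for every quadratic $L/K$, as claimed.

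For $D=-1$, ``$i\in L$'' is automatic, so $C_4\oplus C_4\subseteq E(L)\Leftrightarrow x^4-y^4\in(L^*)^2$. By \cref{Fermat} with $D=-1$ the nonzero element $x^4-y^4=(x^2-y^2)z^2$ is not a square in $K$ --- otherwise $(x,y,w)$ with $w^2=x^4-y^4$ would be a nontrivial solution of $u^4-v^4=w^2$ --- and hence $x^2-y^2$ is not a square in $K$ either. A nonzero non-square of $K$ becomes a square in exactly one quadratic extension, so there is a unique quadratic $L/K$ with $C_4\oplus C_4\subseteq E(L)$, namely $L=K(\sqrt{x^4-y^4})=K(\sqrt{x^2-y^2})$ (the last equality because $z\in K^*$), and it is a genuine quadratic extension since $x^2-y^2\notin(K^*)^2$. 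I expect the only fussy point to be the normalization coming out of \cref{Ono} --- absorbing the factor $d^2$ of \cref{Ono}(2) into a rescaling of $x$ so that $M$ and $N$ become actual fourth powers --- and, relatedly, keeping careful track of which square-class assertions are made over $K$ and which over $L$; the mathematical core, the identity $z^4-s^4=(2xy)^2$ feeding \cref{Fermat}, is a one-liner.
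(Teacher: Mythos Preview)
Your proof is correct and follows essentially the same approach as the paper: normalize via \cref{Ono}, use \cref{lifting lemma} to see that full $4$-torsion in $L$ forces $i\in L$ and $x^4-y^4\in(L^*)^2$, and then reduce to \cref{Fermat}. The only cosmetic difference is that in the $D=-3$ case you derive $z^4-s^4=(2xy)^2$ from $\pm(x^2-y^2)=s^2$ and $x^2+y^2=z^2$, whereas the paper simply multiplies these two relations to get $\pm(x^4-y^4)=(sz)^2$ directly; both feed \cref{Fermat} equally well.
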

\noindent See \cite{Kwon}*{Lemma 2} for a similar argument in the case $K = \mathbb{Q}$.
\begin{proof}

By  \cref{Ono}, WLOG we can assume there exists $ x,y,z \in \mathcal{O}_K$ such that  $M=x^4$, $N=y^4$ and $x^2+y^2  = z^2.$ If $C_4 \oplus C_4 \subseteq E(L)$, then every point of order 2 lifts, so in particular there exists a point P in $E(L)$ such that $2P=(-x^4,0)$.  Hence by \cref{lifting lemma}, $-x^4$ and $y^4-x^4$ are squares in $L$, so there exist $a,b \in \mathcal{O}_L$ such that $-x^4=a^2$ and $y^4-x^4 = b^2$. Therefore $-1 \in L$ so either (i) $ D=-1$ (if $-1 \in K$) or (ii) $D=-3$ and $d=-1 (L = K(\sqrt{-1}))$.  Also,
$$ b^2 = y^4-x^4 = (y^2-x^2)(y^2+x^2)  = (y^2-x^2)z^2$$
so $c^2 = y^2-x^2 \in K$ for some $c \in L$.  Writing $c=e+f\sqrt{d}$ ($e,f \in K$)  since $c^2 \in K$, we conclude that $c^2 = e^2$ or $df^2$.  Hence we get a solution to either $g^2 = y^2-x^2$  or $ dg^2 = y^2-x^2$ ($g \in \mathcal{O}_K$). 

In case (ii), we obtain either way $y^4-x^4 = z^2$ (or $x^4-y^4 = z^2$) for some $z \in \mathcal{O}_K$.  Furthermore we must have $x,y, x^4-y^4 \ne 0$, so $z \ne 0$ because E is nonsingular.  But this contradicts  \cref{Fermat}.
In case (i), we obtain either $y^4-x^4 = dz^2$ or $y^4-x^4 = z^2$ for some $d,z \in \mathcal{O}_K$ where $K = \mathbb{Q}(i)$.  The latter cannot occur by \cref{Fermat}.  

On the other hand, if we choose any Pythagorean triple $(x,y,z)$ in $\mathcal{O}_K$ with $x,y \ne 0$ and $x^4 \ne y^4$ and set $M=x^4, N=y^4$ then 
writing $y^4-x^4$ as $dz^2$ with d a nonsquare in $K$, we find (by  \cref{lifting lemma}) that $L = K(\sqrt{d})$) is the unique quadratic extension of K (in a fixed algebraic closure  $\overline{K}$ of $K$) such that  $C_4 \oplus C_4 \subseteq E(L)$.  Note that the argument in case (i) above implies that L/K is a nontrivial extension. 
\end{proof}
\begin{lem}
\label{full4torsion2}

Let E be an elliptic curve defined over $K = \mathbb{Q}(\sqrt{D}) (D=-1, -3)$, $d \in \mathcal{O}_K$, d a nonsquare and let $L = K(\sqrt{d})$ .   
Suppose $E(K)_{tor} \approx  C_2 \oplus C_4$. Then WLOG we may write $M=u^2, N=v^2$ for some $u,v \in \mathcal{O}_K$.  We have  $E(L)_{tor} \supseteq  C_4 \oplus C_4$ iff:

\begin{itemize}

\item D = -3, d=-1 and there exists a $w \in \mathcal{O}_K$ such that $v^2-u^2=\pm w^2$

\item D = -1 and L=K$(\sqrt{v^2-u^2})$

\end{itemize}

\end{lem}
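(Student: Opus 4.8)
The plan is to reduce the assertion to a pair of square‑class conditions on $v^2-u^2$ and on $-1$, and then to read those conditions off from $D$ and $d$. First I would observe, using \cref{torsionoverc}, that $E[4]\cong C_4\oplus C_4$ has order $16$ and is the \emph{only} subgroup of $E(\overline{K})$ isomorphic to $C_4\oplus C_4$ (any such subgroup has order $16$, is killed by $4$, hence is contained in, and therefore equals, $E[4]$). So $E(L)_{tor}\supseteq C_4\oplus C_4$ is equivalent to $E[4]\subseteq E(L)$. Since $E$ has full $2$-torsion over $K$ we have $E[2]\subseteq E(K)\subseteq E(L)$, so for each $P\in E[2]$ the whole fibre $[2]^{-1}(P)$ is $L$-rational as soon as one of its four points is; and $E[4]=[2]^{-1}(E[2])$ is the union of these four fibres. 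Hence $E[4]\subseteq E(L)$ holds iff each of the three nontrivial $2$-torsion points of $E$ lifts in $L$ in the sense of \cref{lifting lemma}.

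Next I would feed this into the model $y^2=x(x+M)(x+N)=(x-0)(x+u^2)(x+v^2)$ with $M=u^2$, $N=v^2$; nonsingularity gives $u,v\ne 0$ and $u^2\ne v^2$. By \cref{lifting lemma}, $(0,0)$ lifts in $L$ iff $M,N$ are squares in $L$, which already holds over $K$; $(-u^2,0)$ lifts iff $-u^2$ and $v^2-u^2$ are squares in $L$; and $(-v^2,0)$ lifts iff $-v^2$ and $u^2-v^2$ are squares in $L$. Since $u,v\ne 0$, ``$-u^2$ (resp. $-v^2$) is a square in $L$'' is the same as ``$-1$ is a square in $L$'', and once $-1\in(L^{\ast})^2$ the elements $v^2-u^2$ and $u^2-v^2$ lie in the same square class of $L$. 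So all three points lift precisely when $-1\in(L^{\ast})^2$ and $v^2-u^2\in(L^{\ast})^2$, and it remains only to translate these into conditions on $D$ and $d$.

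For the translation I would use the elementary fact that for $\alpha\in K^{\ast}$ and $L=K(\sqrt d)$ with $d$ a nonsquare, $\alpha\in(L^{\ast})^2$ iff $\alpha\in(K^{\ast})^2$ or $\alpha d^{-1}\in(K^{\ast})^2$ (write $\sqrt\alpha=e+f\sqrt d$ with $e,f\in K$, expand, and use that $\{1,\sqrt d\}$ is a $K$-basis to force $ef=0$). If $D=-1$, then $-1=i^2\in(K^{\ast})^2$, so the first condition is automatic; the key point is that $v^2-u^2\notin(K^{\ast})^2$, for otherwise $-u^2=(iu)^2$ and $v^2-u^2$ would both be squares in $K$, $(-u^2,0)$ would lift over $K$, and together with the lift of $(0,0)$ this forces $C_4\oplus C_4\subseteq E(K)$, contradicting $E(K)_{tor}\approx C_2\oplus C_4$. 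Hence $v^2-u^2\in(L^{\ast})^2$ iff $L=K(\sqrt{v^2-u^2})$, as stated. If $D=-3$, then $-1\notin(K^{\ast})^2$, so $-1\in(L^{\ast})^2$ forces $L=K(\sqrt{-1})$ (i.e. $d=-1$ up to squares); granting this, $v^2-u^2\in(L^{\ast})^2$ iff $v^2-u^2\in(K^{\ast})^2$ or $-(v^2-u^2)\in(K^{\ast})^2$, i.e. $v^2-u^2=\pm w^2$ for some $w\in K$, and since $v^2-u^2\in\mathcal{O}_K$ with $\mathcal{O}_K$ integrally closed we may take $w\in\mathcal{O}_K$.

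I expect the only genuine subtlety to lie in this last paragraph: keeping straight which elements are squares in $L$ as $D$ and $d$ vary (the $\pm$ in the $D=-3$ case in particular), and invoking the hypothesis $E(K)_{tor}\approx C_2\oplus C_4$ in exactly the right place, namely to exclude $v^2-u^2\in(K^{\ast})^2$ when $D=-1$, while noting that for $D=-3$ this is \emph{not} excluded (because $-1$ is then a nonsquare in $K$), so both signs in $v^2-u^2=\pm w^2$ genuinely occur. Everything else is a direct application of \cref{torsionoverc}, \cref{lifting lemma}, and the normalization $M=u^2,N=v^2$ granted by \cref{Ono}.
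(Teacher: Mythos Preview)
Your proof is correct and follows essentially the same approach as the paper: reduce $C_4\oplus C_4\subseteq E(L)$ to the lifting of the remaining order-$2$ points via \cref{lifting lemma}, obtain the two conditions that $-1$ and $v^2-u^2$ be squares in $L$, and then split according to whether $-1$ is already a square in $K$. Your write-up is somewhat more explicit than the paper's (you spell out why $C_4\oplus C_4\subseteq E(L)$ is equivalent to $E[4]\subseteq E(L)$ and state the ``square in $L$ iff square or $d$ times a square in $K$'' fact separately), but the logical content is the same.
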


\begin{proof}
WLOG we may assume (0,0) lifts over K and  write $M=u^2, N=v^2$ for some $u,v \in \mathcal{O}_K$.  If $E(L)_{tor} \supseteq  C_4 \oplus C_4$ then some other order 2 point lifts.  But by \cref{lifting lemma}, if $(-u^2,0)$ lifts over L then $-u^2$ and $u^2-v^2$ are squares in L, or equivalently, $-1$ and $u^2-v^2$ are squares in L.  As -1, $u^2-v^2$ $\in$ K, if $-1$ and $u^2-v^2$ are squares in L then each of them is a square in K or d times a square in K (and both will not be squares in K, as otherwise $E(K) \supseteq C_4 \oplus C_4$).

Now if $K=\mathbb{Q}(i)$, then -1 is a square in K so $u^2-v^2$ is not, and hence $L \supseteq K(\sqrt{u^2-v^2})$.  But $K(\sqrt{u^2-v^2})$ is quadratic over K so we must have $L= K(\sqrt{u^2-v^2})$.  Conversely, if $M=u^2, N=v^2$ for some $u,v \in \mathcal{O}_K$ and we define $L= K(\sqrt{u^2-v^2})$ then  $-u^2 = (iu)^2$ and $u^2-v^2$ are squares, so the result follows by the \cref{lifting lemma}.

If $K=\mathbb{Q}(\sqrt{-3})$ then as $-1$ is not a square in K, as above we must have $L=K(\sqrt{-1})$.  But if $u^2-v^2$ is a square in L, as $u^2-v^2 \in K$, we conclude $u^2-v^2 = w^2$ or $-w^2$ for some $w \in K$. Conversely, , if $M=u^2, N=v^2$ for some $u,v \in \mathcal{O}_K$ and we define $L= K(i)$ then  $-u^2 = (iu)^2$ and $u^2-v^2=w^2$ or $(iw)^2$ are squares, so the result follows by the lemma \ref{lifting lemma}. 
\end{proof}

\section{Classification of Growth}

Now we prove  \cref{main-1} and \cref{main-3}.  For clarity of proof, we prove each case of the main theorems simultaneously.  In the results below, the elliptic curves have full 2-torsion over the base field $K$ so each has a model of the form:
$$ y^2=x(x+M)(x+N)$$  for some $M,N \in K$.  We denote the curve above by $E(M,N)$. 
Our main results are the following:

\begin{thm}\label{main-1}
Let K $= \mathbb{Q}(\sqrt{-3}$) and $E/K$ an elliptic curve with full 2-torsion.

\begin{enumerate}

\item If $E(K)_{tor} \approx$ $C_2 \oplus C_8$ then $E(L)_{tor} \approx C_2 \oplus C_8$ for all quadratic extensions L of K.

\item If $E(K)_{tor} \approx$ $C_2 \oplus C_6$ then there is at most one quadratic extension $L'$ such that  $E(L')_{tor} \approx C_4 \oplus C_6$ and $E(L)_{tor} \approx C_2 \oplus C_6$ for all other quadratic extensions L of K.

\item If $E(K)_{tor} \approx$ $C_2 \oplus C_4$ then WLOG we may write $M=u^2, N=v^2$. There is a quadratic extension L of K such $E(L) \supseteq C_4 \oplus C_4$ iff $u^2-v^2= \pm w^2$ for some $w \in K$ and in this case $L=K(i)$ and $E(L) \approx C_4 \oplus C_4$.  There are at most two quadratic extensions L in which E(L) has a point of order (at least) 8 and in this case $E(L) \approx C_2 \oplus C_8$.

\item If $E(K)_{tor} \approx$ $C_2 \oplus C_2$ and L is a quadratic extension of K then one may have $E(L) \approx C_2 \oplus C_2, C_2 \oplus C_4, C_2 \oplus C_6,C_2 \oplus C_8,C_4 \oplus C_6$ or $C_4 \oplus C_4$.  If $L= K(\sqrt{d})$ for some $d \in K$ then this can be determined by replacing $E$ by $E^d$. The group $E(K)_{tor}$ grows in at most 3 extensions.

\end{enumerate}

\end{thm}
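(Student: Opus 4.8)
The plan is to handle all four cases of \cref{main-1} with a single template, leaning heavily on the twist classification \cref{twist}. For a non‑square $d$ and $L=K(\sqrt d)$, \cref{kwon} gives an injection $E(L)_{tor}/E(K)_{tor}\hookrightarrow E^d(K)_{tor}$, and \cref{twist} tells us $E^d(K)_{tor}$ in terms of $E(K)_{tor}$. The odd part of $E(L)_{tor}$ is then controlled by \cref{oddpartsadd} and \cref{torsionoverc}, which give $E(L)[\ell]\approx E(K)[\ell]\oplus E^d(K)[\ell]$ for every odd $\ell$; the $2$‑part is controlled by the lifting criterion \cref{lifting lemma}, the parametrizations in \cref{Ono}, and \cref{full4torsion1}, \cref{full4torsion2} for the appearance of $C_4\oplus C_4$. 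Since \cref{Najman} bounds the groups that can occur over any number field of the relevant degree, the problem splits into (i) showing the list of possible $E(L)_{tor}$ is the stated one, and (ii) bounding the number of classes $[d]\in K^{\times}/(K^{\times})^2$ for which $E(L)_{tor}\neq E(K)_{tor}$.

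For case (1), \cref{twist}(1) gives $E^d(K)_{tor}\approx C_2\oplus C_2$, so the odd part is frozen and, by \cref{full4torsion1} with $D=-3$, the $2$‑part never reaches $C_4\oplus C_4$; hence $E(L)_{tor}[2^\infty]\approx C_2\oplus C_{2^b}$ and it remains to exclude a point $Q\in E(L)$ of order $16$. I would argue that for such a $Q$, \cref{kwon} forces $2Q\in E(K)$, so some order‑$8$ point $P_8\in E(K)$ becomes $2$‑divisible over $L$ but (by \cref{Najman}) not over $K$; writing $M=x^4,N=y^4$ with $x^2+y^2=z^2$ as in \cref{Ono}(2), the explicit coordinates of $P_8$ convert ``$P_8$ lifts over $K(\sqrt d)$'' into the assertion that one of $xy(x+z)(y+z)$, $xz(x+y)(y+z)$, $yz(x+y)(x+z)$ (or the analogue for another order‑$8$ point) is a square in $K$, and substituting $(x:y:z)=(1-m^2:2m:1+m^2)$ from \cref{circle} reduces this to an equation ruled out by \cref{Fermat}; this is precisely where $\mathbb{Q}(\sqrt{-3})$ differs from $\mathbb{Q}(\sqrt{-7})$. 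Cases (2) and (3) follow the same shape. For (2), \cref{twist}(2) gives $E^d(K)_{tor}\approx C_2\oplus C_2$, so growth can only be $2$‑torsion lifting; if two of the three $2$‑torsion points lifted over one $L$ then $-1$ would be a square in $L$ (forcing $L=K(i)$) and all three would lift, giving $C_4\oplus C_4\subseteq E(L)$, which after inserting the forms of $M,N$ from \cref{Ono}(3) contradicts \cref{order2=order3} or \cref{modifiedorder2=order3}; hence at most one $2$‑torsion point lifts per extension, and a short computation in $K^{\times}/(K^{\times})^2$ with the classes $[M],[N],[M-N]$ coming from \cref{Ono}(3) shows that growth occurs in at most one extension, there to $C_2\oplus C_4\oplus C_3\cong C_4\oplus C_6$. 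For (3), with $M=u^2,N=v^2$, \cref{full4torsion2} ($D=-3$) gives $C_4\oplus C_4\subseteq E(L)$ exactly when $L=K(i)$ and $u^2-v^2=\pm w^2$, and in that case $E(L)_{tor}=C_4\oplus C_4$ (no odd growth by \cref{twist}(4), and no point of order $8$ because $C_4\oplus C_8$ over $K(\sqrt{-1})$ unwinds via \cref{Ono}(2) to a Diophantine equation handled as in \cref{Some Diophantine}); separately, translating \cref{Ono}(2) over $L$ into conditions on $[d]$ shows an order‑$8$ point appears for at most two classes $[d]$, in those extensions with $E(L)_{tor}=C_2\oplus C_8$. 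Altogether, growth in at most $3$ extensions.

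Case (4) is the one to watch. Here $E(L)_{tor}\supseteq C_2\oplus C_2$ always, and since $E(L)_{tor}=E^d(L)_{tor}$ via the twist, the candidate groups for $E(L)_{tor}$ are exactly those produced by cases (1)--(3) together with the sub‑cases of \cref{twist}(5) applied to $E^d$, i.e.\ $C_2\oplus C_2,C_2\oplus C_4,C_2\oplus C_6,C_2\oplus C_8,C_4\oplus C_6,C_4\oplus C_4$. For the bound $g(E)\le 3$, I would observe that $E(L)_{tor}\neq C_2\oplus C_2$ forces one of: (a) a $2$‑torsion point of $E$ lifts over $L$, which by \cref{lifting lemma} pins $[d]$ to at most one class for each of the three $2$‑torsion points; (b) $E(L)$ gains a point of order $3$, equivalently $E^d(K)_{tor}\approx C_2\oplus C_6$, which by \cref{twist}(5) happens for at most one $[d]$; (c) $E(L)$ gains a point $Q$ of order $8$, in which case $Q+\sigma Q\in E(K)_{tor}=C_2\oplus C_2$ gives $\sigma(2Q)=-2Q$, so $2Q$ is an order‑$4$ anti‑invariant point and $E^d(K)_{tor}\supseteq C_2\oplus C_4$, holding for at most two classes by \cref{twist}(5) (one if the twist has type $C_2\oplus C_8$ or $C_4\oplus C_4$). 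The remaining point is to see the classes in (b),(c) are not new: applying \cref{Ono}(1) to $E^d=E(dM,dN)$, the first alternative forces $[M]=[N]$ and hence $[d]$ equals the $(0,0)$‑lifting class, while the other two alternatives make $[d]$ equal to the $(-M,0)$‑ or $(-N,0)$‑lifting class. Thus at most three classes $[d]$ cause growth, and all six groups occur; the structural part of this argument runs parallel to \cite{Kwon}*{Section 4}.

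I expect the main obstacle to be the collection of Diophantine non‑existence inputs --- ``no point of order $16$ over a quadratic extension'' (cases (1) and (3)) and ``no $C_4\oplus C_8$ over $K(\sqrt{-1})$'' (case (3)) --- since after clearing squares in the Ono coordinates these become curves of genus $\le 2$ whose $K$‑points outside an explicit exceptional set must all be enumerated; I would do this exactly as in \cref{Some Diophantine}, computing ranks and torsion of the relevant elliptic or genus‑$2$ Jacobians in Magma, with the order‑$16$ statement resting ultimately on \cref{Fermat}. The secondary difficulty is the bookkeeping of ``bad'' classes in $K^{\times}/(K^{\times})^2$ in cases (2) and (4): elementary but lengthy, and dependent on \cref{order2=order3}, \cref{modifiedorder2=order3} and \cref{order3=order3} (invoked through \cref{twist}) and on the fact that $-1$ is a non‑square in $\mathbb{Q}(\sqrt{-3})$.
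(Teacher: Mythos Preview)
Your overall strategy matches the paper's: use \cref{kwon} and \cref{twist} to control $E(L)_{tor}/E(K)_{tor}$, the lifting criterion \cref{lifting lemma} together with the parametrizations in \cref{Ono} to analyze the $2$-part, and reduce the residual obstructions to rank/torsion computations on auxiliary curves. For cases (1)--(3) your sketch lines up with the paper's argument, with one correction in case (1): after parametrizing the Pythagorean triple, the three quantities $P_x$, $P_x+u^4$, $P_x+v^4$ do \emph{not} reduce to \cref{Fermat}. The first gives (modulo squares) a point on $y^2=x^3-x$, and the other two give a point on the genus-$2$ curve $y^2=(1-x)(1+x)(1+x^2)(1+2x-x^2)$; both are handled by separate rank and Jacobian computations over $K$, not by \cref{Fermat}. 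Your closing paragraph shows you expected the genus-$2$ work, so this is a misattribution rather than a structural gap. I would also flag that in case (2) the ``short computation in $K^{\times}/(K^{\times})^2$'' is the substantive step, not a formality: the paper establishes (using \cref{modifiedorder2=order3} and \cref{order2=order3} several times) that if, say, $(0,0)$ lifts somewhere then none of $-M$, $N-M$, $-N$, $M-N$ is a square in $K$, which prevents the other two $2$-torsion points from lifting in \emph{any} extension.

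There is a genuine gap in case (4). Your plan is to show that the class $[d]$ arising in your case (b) (odd torsion grows, i.e.\ $E^d(K)_{tor}\approx C_2\oplus C_6$) already lies among the three $2$-torsion lifting classes in (a), and the justification you offer is ``applying \cref{Ono}(1) to $E^d$''. But \cref{Ono}(1) is a criterion for order-$4$ points, and when $E^d(K)_{tor}\approx C_2\oplus C_6$ there is no such point; your argument therefore only covers (c), which is in any event trivially contained in (a) since an order-$8$ point over $L$ doubles to an order-$4$ lift of a $K$-rational $2$-torsion point. The inclusion (b)$\subseteq$(a) is in fact false: for $E=E(-1,\lambda)$ over $K=\Q(\sqrt{-3})$ (see \cref{numberofext-3}) the torsion grows to $C_2\oplus C_6$ in $K(\sqrt{-\sqrt{-3}})$, and no $2$-torsion point lifts there. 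The paper argues differently: growth of $E(K)[2^\infty]$ occurs in at most three extensions (one per $2$-torsion point), growth of the odd part in at most one; and if the $2$-part actually grows in three distinct extensions, the resulting constraints on $M$, $N$ force $E$ to have a twist with a point of order $4$, whence by \cref{twist} no twist carries a point of order $3$ and the odd part cannot grow at all. This yields $g(E)\le\max(3+0,\,2+1)=3$.
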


\begin{thm}\label{main-3}
Let K $= \mathbb{Q}(\sqrt{-1}$) and $E/K$ an elliptic curve with full 2-torsion.

\begin{enumerate}

\item If $E(K)_{tor} \approx$ $C_2 \oplus C_8$ then there is a unique quadratic extension $L'$ of K in which $E(L')_{tor} \approx C_4 \oplus C_8$ and $E(L)_{tor} \approx C_2 \oplus C_8$ for all other quadratic extensions $L$ of $K$.

\item If $E(K)_{tor} \approx$ $C_2 \oplus C_6$ then there are at most two quadratic extension $L'$ such that  $E(L')_{tor} \approx C_4 \oplus C_6$ and $E(L)_{tor} \approx C_2 \oplus C_6$ for all other quadratic extensions $L$ of $K$.

\item If $E(K)_{tor} \approx$ $C_2 \oplus C_4$ then WLOG we may write $M=u^2, N=v^2$. The quadratic extension $L=K(\sqrt{u^2-v^2})$ is the unique quadratic extension of K such $E(L) \supseteq C_4 \oplus C_4$ and in this case  $E(L) \approx C_4 \oplus C_4$. 
There are at most two other quadratic extensions $L'$ in which a point of order 4 (over K) lifts  and in this case $E(L') \approx C_2 \oplus C_8$.

\item If $E(K)_{tor} \approx$ $C_2 \oplus C_2$ and L is a quadratic extension of K then one may have $E(L) \approx C_2 \oplus C_2, C_2 \oplus C_4, C_2 \oplus C_6,C_2 \oplus C_8,C_4 \oplus C_6$ ,$C_4 \oplus C_4$ or $C_4 \oplus C_8$.  If $L= K(\sqrt{d})$ then this can be determined by replacing $E$ by $E^d$. The group $E(K)_{tor}$ grows in at most 3 extensions.

\item If $E(K)_{tor} \approx$ $C_4 \oplus C_4$ then  there is at most one quadratic extension L of K in which $E(K)_{tor}$ grows and in this case $E(L) = C_4 \oplus C_{8}$.

\end{enumerate}

\end{thm}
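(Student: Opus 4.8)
Here is the plan.

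The plan is to reduce $E$ to an explicit model, bound the torsion that can appear over a quadratic extension, rule out $C_8\oplus C_8$, and then show that whatever growth occurs is confined to a single quadratic field. Throughout, $K=\mathbb{Q}(\sqrt{-1})$ and $E/K$ has full $2$-torsion with $E(K)_{tor}\approx C_4\oplus C_4$. First I would normalize $E$: by \cref{Ono} we may write $E=E(M,N)$, and since $E(K)$ contains all three order-$4$ points lying over the $2$-torsion, \cref{lifting lemma} forces, after relabeling the roots, $M=u^2$, $N=v^2$ with $v^2-u^2=w^2$; thus $(u,w,v)$ is a Pythagorean triple in $\mathcal{O}_K$ with $u,v,w\neq0$ and $u^2\neq v^2$, and $E=E(u^2,v^2)$. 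Note that $E(K)_{tor}\approx C_4\oplus C_4$ is equivalent to saying that no order-$4$ point of $E$ is already $2$-divisible over $K$. Then, for $L=K(\sqrt{d})$ with $d$ a nonsquare, \cref{twist}(3) gives $E^d(K)_{tor}\approx C_2\oplus C_2$, so by \cref{kwon}, \cref{oddpartsadd} and \cref{torsionoverc} the group $E(L)_{tor}$ is a $2$-group, $E(L)[2]\approx C_2\oplus C_2$, and $E(L)_{tor}/E(K)_{tor}$ embeds in $C_2\oplus C_2$; hence $E(L)_{tor}\in\{C_4\oplus C_4,\ C_4\oplus C_8,\ C_8\oplus C_8\}$, and $E(K)_{tor}$ grows in $L$ precisely when $E(L)$ acquires a point of order $8$.

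Next I would eliminate $C_8\oplus C_8$. If $E(L)_{tor}\approx C_8\oplus C_8$ with $L=K(\sqrt{d})$, then $E\cong E^d$ over $L$ gives $E^d(L)_{tor}\approx C_8\oplus C_8$; but $E^d(K)_{tor}\approx C_2\oplus C_2$, and $C_8\oplus C_8$ is not among the groups that appear over a quadratic extension of $K$ for a curve with $C_2\oplus C_2$ over $K$ (part (4) of the present theorem). Hence, whenever $E(K)_{tor}$ grows in $L$, we have $E(L)_{tor}\approx C_4\oplus C_8$. (If one does not wish to quote part (4), this also follows directly: full $8$-torsion over $L$ would require the order-$4$ points over all three $2$-torsion points to become $2$-divisible over one and the same $L$, and the ensuing simultaneous Diophantine system has no nontrivial $\mathcal{O}_K$-solutions by \cref{Fermat}.)

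For the bound on the number of extensions I would argue by contradiction: suppose $E(K)_{tor}$ grows in two distinct quadratic extensions $L_1=K(\sqrt{d_1})$ and $L_2=K(\sqrt{d_2})$. By the previous paragraph each $E(L_j)$ has a point $R_j$ of order $8$, and $T_j:=4R_j\in E(K)[2]$. If $T_1=T_2$ then $4R_1=4R_2$, so $R_1-R_2\in E[4]=E(K)[4]\subseteq E(K)$ and $R_1\in E(L_2)$; but $E(L_1)\cap E(L_2)=E(K)$, so $R_1\in E(K)$ has order $8$, contradicting $E(K)_{tor}\approx C_4\oplus C_4$. Therefore $T_1\neq T_2$, and after relabeling we may take $T_1=(0,0)$ and $T_2=(-u^2,0)$. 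Then $2R_1$ is an order-$4$ point over $(0,0)$, with $x$-coordinate $\pm uv$, and (after the translation $x\mapsto x+u^2$, turning the model into $y^2=x(x-u^2)(x+w^2)$) $2R_2$ is an order-$4$ point over $(-u^2,0)$, with shifted $x$-coordinate $\pm iuw$. By \cref{lifting lemma}, $2$-divisibility of $2R_1$ over $L_1$ says that each of $uv$, $u(u+v)$, $v(u+v)$ is a square in $L_1$; since $d_1\notin(K^\times)^2$ and the product of the three lies in $(K^\times)^2$, this forces exactly one of them to be a square in $K$ and the other two to lie in the square-class of $d_1$. Likewise, $2$-divisibility of $2R_2$ over $L_2$ forces exactly one of $iuw$, $u(iw-u)$, $w(iu+w)$ to be a square in $K$, with the other two in the square-class of $d_2$.

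Finally I would resolve these two systems using \cref{Some Diophantine}. Parametrizing the triple via \cref{circle} as $(u,w,v)=\mu(q^2-p^2,\,2pq,\,q^2+p^2)$, each ``is a square in $K$'' condition becomes a $K$-rational point on an explicit curve; for instance $uv\in(K^\times)^2$ is equivalent to $q^4-p^4\in(K^\times)^2$, which by \cref{Fermat} forces $p^4=q^4$ and so degenerates the triple, and one checks similarly that $iuw\in(K^\times)^2$ cannot happen for a nondegenerate $E$. So in each system the distinguished square is a ``mixed'' one ($u(u+v)$ or $v(u+v)$; $u(iw-u)$ or $w(iu+w)$), and going through the finitely many remaining cases --- using \cref{Fermat} together with the rank-zero elliptic curves of \cref{Some Diophantine} to discard the impossible combinations --- one finds that whenever both systems are solvable the square-classes of $d_1$ and $d_2$ coincide, i.e. $L_1=L_2$, a contradiction. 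This shows $E(K)_{tor}$ grows in at most one quadratic extension of $K$, with growth necessarily to $C_4\oplus C_8$. The main obstacle is exactly this last step: the reduction to the Pythagorean model, the coarse bound on $E(L)_{tor}$, and the exclusion of $C_8\oplus C_8$ are formal given the earlier results, whereas pinning down that the two lifting systems cannot be simultaneously solved with $d_1\not\equiv d_2$ requires the explicit genus-$0$ and rank-$0$ analysis of \cref{Some Diophantine}.
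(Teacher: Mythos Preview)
Your approach is essentially the paper's: normalize to $E(u^2,v^2)$ with a Pythagorean relation, use \cref{twist} and \cref{kwon} to bound $E(L)_{\tors}$, and then reduce the existence of order-$8$ points to Diophantine conditions on $u,v,w$ that are disposed of via \cref{circle}, \cref{Fermat}, and a handful of rank-zero elliptic curves. Your $T_1\neq T_2$ argument (using $E[4]\subset E(K)$ and $E(L_1)\cap E(L_2)=E(K)$) is a clean way to see that growth in two extensions forces the order-$8$ points to sit above \emph{distinct} $2$-torsion points; the paper instead works directly with a fixed basis $Q,Q',Q+Q'$ of $E(K)[4]$ modulo $E[2]$ and checks all three pairs, whereas your relabeling collapses this to one pair. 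Both routes lead to the same case analysis.

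There is one structural issue worth flagging. Your exclusion of $C_8\oplus C_8$ by quoting part~(4) is circular: in the paper, part~(4) is proved \emph{after} part~(5) and explicitly invokes part~(5) in the case $E^d(K)_{\tors}\approx C_4\oplus C_4$ --- which is exactly the twist you land on. So that appeal must go. Fortunately the fix is already implicit in your last paragraph: when you carry out the case analysis on the two lifting systems, you will in fact find (as the paper does) the stronger conclusion that the two systems are \emph{never} simultaneously solvable for a nondegenerate triple, not merely that solvability forces $d_1\equiv d_2$. That stronger statement both gives ``at most one extension'' and rules out $C_8\oplus C_8$ (take $L_1=L_2$), so you can drop the reference to part~(4) entirely and state the case analysis in that sharper form.
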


Let $\lambda$ denote a primitive 3rd root of unity. In the tables below we let $E := E(M,N)$.

\begin{table}[ht]

 \caption{Examples of Growth of Torsion  over $K=\mathbb{Q}(\sqrt{-1})$ 
} 

\centering 
\begin{center}
    \begin{tabular}{| l | l | l | l |}

    \hline
    
    \label{numberofext-1}

    $E(K)_{tor}$ & (M,N) & g(E) & $(d,E(K(\sqrt{d} )_{tor})$\\ \hline

       $C_2 \oplus C_8$ 
      & (81,256) & 1 & $(7,  C_4 \oplus C_8)$

  \\ \hline
    
       $C_2 \oplus C_6$ 
      &  $(64,189)$ & 2 & $(5, C_2 \oplus C_{12})$,$(21, C_2 \oplus C_{12})$

  \\ \hline
    
    $C_2 \oplus C_4$ 
      & (-1,1)& 1 & $(2,C_4\oplus C_4)$

  \\ \cline{2-4}
      & (1,4) & 2 & $(2 , C_2 \oplus C_8)$,$(3 , C_4 \oplus C_4)$

  \\ \cline{2-4}
     & (1,16)  & 3 & $( 3 , C_2 \oplus C_8)$ $( 5  , C_2 \oplus C_8)$ $( 15 , C_4 \oplus C_4)$

  \\ \hline
  
   $C_4 \oplus C_4$ 
      & (16,25) & 1 & $(5, C_4 \oplus C_8)$

  \\ \hline
  
   $C_2 \oplus C_2$ 
      & (-2,2) &1 & $(2, C_4 \oplus C_4)$
     \\ \cline{2-4}
     
      & (-2,1) &2  & $(2, C_2 \oplus C_4)$,$(3, C_2 \oplus C_4)$
     \\ \cline{2-4}
      & $(5\cdot 64,5 \cdot 189)$ &2 & $(5, C_4 \oplus C_6)$,$(105, C_2 \oplus C_4)$
     \\ \cline{2-4}
      & (25, 160) &3 & $(5,C_2 \oplus C_6 )$,$(10, C_2 \oplus C_4 )$, $(15, C_2 \oplus C_4)$

  \\ \hline

    \end{tabular}
   
\end{center}
\end{table}

\begin{table}[ht]

 \caption{ Examples of Growth of Torsion over $K=\mathbb{Q}(\sqrt{-3})$
 }
\centering 
\begin{center}
    \begin{tabular}{| l | l | l | l |}
    \hline
    \label{numberofext-3}

    $E(K)_{tor}$ &    (M,N)& g(E) & $(d,E(K(\sqrt{d})_{tor})$\\ \hline

       $C_2 \oplus C_6$ 
      &  $(64,189)$ & 1 &  $(21, C_4 \oplus C_{6})$

  \\ \hline
    
    $C_2 \oplus C_4$ 

 &  (-4,-3)  & 1 & $(3 , C_4 \oplus C_4)$
  \\ \cline{2-4}
     &  (16,25)  & 2 &$(5 , C_2 \oplus C_8)$,$(-1 , C_4 \oplus C_4)$
     
      \\ \cline{2-4}
    
     &  (25,-24) & 3 & $( 6 , C_2 \oplus C_8)$ $( -6  , C_2 \oplus C_8)$ $( -1 , C_4 \oplus C_4)$

  \\ \hline

   $C_2 \oplus C_2$ 
     & $(21\cdot 64, 21\cdot 189)$  & 1 & $(21, C_4 \oplus C_6)$
     \\ \cline{2-4}
     
     & (-2,2)  & 1 & $(2, C_2 \oplus C_4)$
     \\ \cline{2-4}
     & (-1,1)  & 2 & $(-1, C_2 \oplus C_4)$,$(2, C_2 \oplus C_4)$
     \\ \cline{2-4}
     & $(-1,\lambda)$  & 2 & $(3, C_4 \oplus C_4)$,$(-\sqrt{-3}, C_2 \oplus C_6)$
     \\ \cline{2-4}
     & (-1, -9)  & 3 & $(-1,C_2 \oplus C_8 )$,$(2, C_2 \oplus C_4 )$, $(-2, C_2 \oplus C_4)$

  \\ \hline

    \end{tabular}
   
\end{center}
\end{table}

\begin{table}[ht]

 \caption{Examples of Growth of Torsion over $K=\mathbb{Q}(\sqrt{-1})$ } 
 \centering 
\begin{center}
    \begin{tabular}{| l | l | l | l |}
    
    \hline
    
    \label{gro-1}

    (M,N) &  E(K)$_{tor}$ & E(K($\sqrt{d}$))$_{tor}$& d \\ \hline
    
    (-2,2) & $C_2 \oplus C_2$ & $C_4 \oplus C_4$  & 2\\ \hline
    
   ($5 \cdot 64, 5 \cdot 189$)  & $C_2 \oplus C_2$ & $C_4 \oplus C_6$ & 5\\ \hline
    
    ($7 \cdot 3^4, 7 \cdot 4^4$) &  $C_2 \oplus C_2$   &  $C_4 \oplus C_8$ & 7 \\ \hline
    
    (1,4)  &    $C_2 \oplus C_4$       & $C_4 \oplus C_4$ & -3  \\ \hline
    (64,189) &   $C_2 \oplus C_6$   &   $C_4 \oplus C_6$ & 5 \\ \hline
    ($3^4, 4^4$)  &  $C_2 \oplus C_8$  &   $C_4 \oplus C_8$ & 7 \\ \hline
    ($4^2,5^2$)  & $C_4 \oplus C_4$  &   $C_4 \oplus C_8$ & 5 \\ \hline

    \end{tabular}
\end{center}
\end{table}

\begin{table}[ht]
 \caption{Examples of Growth of Torsion over $K=\mathbb{Q}(\sqrt{-3})$ } 
 \centering 
\begin{center}

    \begin{tabular}{| l | l | l | l |}

    \hline
    
    \label{gro-3}

    (M,N) &  E(K)$_{tor}$ & E(K($\sqrt{d}$))$_{tor}$& d \\ \hline
    
    (-1, $\lambda$) &  $C_2 \oplus C_2$ & $C_4 \oplus C_4$ & -1\\ \hline
    
    ($21 \cdot 64$, $21 \cdot  189$) &  $C_2 \oplus C_2$ & $C_4 \oplus C_6$ & 21 \\ \hline

    ($4^2,5^2$)  &     $C_2 \oplus C_4$       & $C_4 \oplus C_4$ & -1  \\ \hline
    (64,189) &    $C_2 \oplus C_6$   &  $C_4 \oplus C_6$ & 21 \\ \hline

    \end{tabular}
\end{center}
\end{table}
\newpage
\noindent Note only one example in \cref{gro-1} and \cref{gro-3} is not defined over $\mathbb{Q}$.  In that case there is in fact no example defined over $\mathbb{Q}$.


\subsection{$E(K)_{tor} 
\approx C_2 \oplus C_8$}

\begin{proof}

1) Suppose $E(K)_{tor} \approx C_2 \oplus C_8$
WLOG we can assume there exists $u,v,w \in \mathcal{O}_K$ such that  $M=u^4$ and $N=v^4$ and $u^2+v^2  = w^2$.  Let L $=$ K$(\sqrt{d})$ where d $\in$ K is a nonsquare.  By \cref{twist}, it follows that $E(L) \approx C_2 \oplus C_8, C_4 \oplus C_8, C_2 \oplus C_{16}$ or $C_4 \oplus C_{16} $.  
We have the following points on E:
$$P'=(-u^4,0)$$
$$P = (uv(u+w)(v+w), uvw(u+v)(v+w)(u+w))$$
$$P'+P = (uv(u+w)(v-w), uvw(u-v)(v-w)(u+w))$$
where $P$ is obtained by applying \cref{lifting lemma}.  The same argument as in \cite{Kwon}*{p.156} shows that $P$ or $P'+P$ lifts over L.  Suppose P lifts over L (the case in which $P'+P$ lifts  is similar).
Explicit calculation shows:
$$P_x = uv(u+w)(v+w)$$
$$P_x+u^4 =  uw(u+v)(w+v)$$
$$P_x+v^4 = vw(v+u)(w+u)   $$

Arguing as in \cite{Kwon}*{p.156} we see one of the above expressions must be a square in K.

If $P_x$ is a square then we have $u,v,w,z \in K$     such that 

$$z^2 = uv(u+w)(v+w)$$
$$ u^2+v^2=w^2$$

\noindent Note that if (u,v,w)=(-1,0,1) then $v=0$ and hence our curve is not smooth, contradicting our hypothesis. By \cref{circle} we can write any other solution to the above system (twisting by a square if necessary) $(u,v,w) =  (1-m^2, 2m, 1+m^2)$.  Again since our curve is smooth by hypothesis, we see that $m \ne 0,\pm1,\pm i$. Evaluating the above equation yields the following curve C:

$$z^2 = (1-m^2)(2m)(2)(m+1)^2 = m(1-m^2)(2m+2)^2$$

\noindent Letting $E_0$ denote the elliptic curve with equation $y^2 = x^3-x$, we have a birational map $f: C \rightarrow E_0$ given by $(m,z) \mapsto (-m,\frac{z}{2m+2})$.  
Magma tells us $E_0(K)$ has rank 0 and $E_0(K)_{tor} \approx C_2 \oplus C_4$.  In fact $E(K)_{tor} = \{ \infty, (0,0), (\pm 1, 0), (i, \pm(1-i) ), (-i, \pm(1+i)  )  \}$.
Hence all rational points (m,z) on C satisfy $m=0,\pm 1$ or $\pm i$, a contradiction.  Similarly when 
$ K=\mathbb{Q}(\sqrt{-3})$ we have $ E_0(K) \approx  C_2 \oplus C_2$ (the rank is 0), so $m=0, \pm 1$, a contradiction. 

If $P_x+u^4$ is a square, then we have $u,v,w,z \in \mathcal{O}_K$     such that 

$$z^2 = uw(u+v)(w+v)$$
$$ u^2+v^2=w^2$$

\noindent Again by \cref{circle} (noting that (-1,0,1) does not yield a solution satisfying the hypotheses), WLOG we can write any solution to the above system $(u,v,w) =  (1-m^2, 2m, 1+m^2)$.  By our hypotheses concerning u,v,w we see that $m \ne 0,\pm1$.
Evaluating the above equation yields the following curve C:

$$z^2 = (1-m^2)(1+m^2)(1+2m-m^2)(m+1)^2 $$

Let $E_1$ denote the curve $$y^2 = (1-x)(1+x)(1+x^2)(1+2x-x^2)$$

\noindent and let J denote its Jacobian.  We wish to classify all K-rational points on this curve.  Magma tells us $E_1$ is smooth (hence hyperelliptic) of genus 2.  Suppose K$= \mathbb{Q}(\sqrt{-1})$ (the case K$= \mathbb{Q}(\sqrt{-3})$ is simpler).  Magma tells us J(K) has rank 0 and 
$$J(K)_{2-part} \approx C_2 \oplus C_{2} \oplus C_{2}$$
$$J(\mathbb{Q})_{tor} \approx C_2 \oplus C_{2} \oplus C_{5} $$
\noindent and furthermore Magma gives us a list of generators.  Together this allows us to generate 40 points on J(K).  

On the other hand, we will argue that J(K) has size at most 40.  
Note that 3 remains prime and 5 splits in $\mathcal{O}_K$. Let $\mathfrak{B}$ denote a prime above 5 in $\mathcal{O}_K$. 
The curve $E_1$  remains non-singular mod 3 and $\mathfrak{B}$ and hence J has good reduction at those primes. Magma tells us $|J(F_9)| = 80 $ and $|J(F_5)| = 40 $.
Since the reduction map  $J(K)_{tor} \rightarrow J(F_5)$ (modulo $\mathfrak{B}$) is injective on the prime-to-5 part, we see the prime-to-5-part has size at most 40 (and hence is a 2-group of size at most 8).  On the other hand, under reduction mod 3 the 5-part of $J(K)_{tor}$ injects into $J(F_9)$ and so can be of size at most 5 . It follows that $|J(K)|=40$.

Magma embeds $E_1$ in (1,3,1)-weighted projective space, yielding two points at infinity $\infty_1 = (1,-1,0)$ and $\infty_2 =  (1,1,0)$.  Now $E_1$ embeds into J via the Abel-Jacobi map:

$$ E_1 \rightarrow J$$
$$ P \mapsto P - \infty_1$$

\noindent Note that the image of P is $P - \infty_1 = P + \infty_2 - \infty_1 - \infty_2$ and the latter expression is the unique coset representative that Magma selects. Among the 40 coset representatives that Magma found, 7 are of the above form and the corresponding points P are $(0,\pm1,1), (1,1,0),(0,1,1),(\pm1,0,0),(\pm i,0,1)$.  Again, any such point corresponds to u,v,w which contradict the hypothesis that our curve is smooth.

If $P_x+v^4$ is a square then one obtains the same system as in the previous case but with u and v interchanged.  Hence using the parametrization $(u,v,w) = (2m, 1-m^2, 1+m^2)$ 
we obtain the same hyperelliptic curve above.
Note that $u^2+(-v)^2=w^2$ and $P'+P$ is related to P by the substitution -v for v. Hence if $2Q=P'+P$ then one obtains the same (hyper)elliptic curves as in the case $2Q=P$.  We conclude that we cannot have $E(L) \approx C_2 \oplus C_{16}$.





Now if $E(L) \approx C_4 \oplus C_{16}$, then in particular $C_4 \oplus C_{4} \subset E(L)$, so by  \cref{full4torsion1}, $K=\mathbb{Q}(i)$ and $d = \sqrt{v^2-u^2}$.
In thise case $E(L)$ has 16 points of order 8.  Applying  \cref{lifting lemma} to obtain all 16 points of order 8, we find that (replacing $w$ by $-w$ or $u$ by $-iu$ if necessary) we may asssume any point P of order 8 has $P_x = uv(u+w)(v+w) \mbox{ or } uv(u+w)(v-w)$.
But in the case $E(L) \approx C_2 \oplus C_{16}$ we already showed that points P of the above form do not lift.

\end{proof}
\subsection{$E(K)_{tor} \approx C_2 \oplus C_6$ }
\begin{proof}

2) Now suppose $E(K)_{tor} \approx C_2 \oplus C_{2}\oplus C_{3}$.  Again by \cref{twist} and \cref{kwon} we have the only possibilities are $E(L)_{tor} \approx C_2 \oplus C_{2}\oplus C_{3}$ or $C_2 \oplus C_{4}\oplus C_{3}$ or $C_4 \oplus C_{4}\oplus C_{3}$.  If $E(L)$ has a point of order 4, 
then either (i) M, N are squares in L (if (0,0) lifts), (ii) -M,N-M are squares in L (if (-M,0) lifts) or (iii) -N, M-N are squares in L (if (-N,0) lifts).  It follows that each order 2 point lifts in at most one quadratic extension.  Furthermore note that if at least two order 2 points lift, then all three lift. 

If (0,0) lifts in $L = K(\sqrt{d})$ then M and N are squares in L and hence WLOG $M=s^2$ or $ds^2$ and $N=t^2$ or $dt^2$ for some $s,t \in \mathcal{O}_K$. Now if $M=s^2$ and $N=t^2$ then E(K) has a point of order 4, contradicting our hypothesis.  Also, if $M=ds^2$ and $N=dt^2$ then as E(K) has a point order 3, we get a contradiction by \cref{order2=order3}.  Hence WLOG we can write $M=ds^2$ and $N=t^2$ and $L = K(\sqrt{d})$ is the unique quadratic extension in which (0,0) lifts.  Noting that $E(-M, N-M) \approx E(M,N)$ we can apply the same argument and conclude that if -M, N-M are square in L (that is, (-M,0) lifts) that one is a square (and the other d times a square).  Similar reasoning applies to -N, M-N 
(see \cite{Kwon}*{p.157} for a similar argument).\\

Suppose $K = \mathbb{Q}(\sqrt{-3}$). We will argue that:

\noindent (I) If (0,0) lifts in a quadratic extension $L = K(\sqrt{d})$, then none of -M,N-M, -N, M-N are squares in K.  

If this holds, then by the argument above, it must be that (-M,0) and (-N,0) dont lift in any quadratic extension of K, Hence L is the unique quadratic extension in which the torsion grows and $E(L) \approx C_2 \oplus C_{4} \oplus C_3$.  In particular, E(L) cannot achieve full 4-torsion.
We would also like to show that:

\noindent (II) If (-M,0) lifts in some quadratic extension L of K then (0,0) and (-N,0) dont lift in any quadratic extension of K.

\noindent (III) If (-N,0) lifts in some quadratic extension L of K then (0,0) and (-M,0) dont lift in any quadratic extension of K.

But these latter two claims ((II) and (III)) follow from the first (I) by the argument in \cite{Kwon}*{p.158}. 
Now let's prove the first claim.  First if $-M = v^2$ for some $v \in K$ then $ds^2=-v^2$ so WLOG $d=-1$.
As E has a point of order 3, we obtain a nontrivial solution to:
$$-s^2=M = a^3(a+2b)c^2$$
$$t^2 =N = b^3(b+2a)c^2$$
contradicting  \cref{modifiedorder2=order3} .  

If $M-N = v^2$ for some $v \in K$. Then as $E(-N, M-N) \approx E(M,N)$ has a point of order 3, we obtain a nontrivial solution to:
$$-t^2= -N = a^3(a+2b)c^2$$
$$v^2 =M-N = b^3(b+2a)c^2$$
contradicting \cref{modifiedorder2=order3}.

If $N-M = v^2$ for some $v \in K$. Then as $E(-N, M-N) \approx E(M,N)$ has a point of order 3, we obtain a nontrivial solution to:
$$-t^2= -N = a^3(a+2b)c^2$$
$$-v^2 =M-N = b^3(b+2a)c^2$$
contradicting \cref{order2=order3}. Finally if -N is a square in K then it follows -1 is a square in K, a contradiction.

Now suppose $K = \mathbb{Q}(i)$ and some point of order 2 lifts.  WLOG we may assume this point is (0,0) and the same proof as above shows (I) holds with the exception that -N is of course a square.  As (I) implies (-M,0) never lifts, we still cannot have $E(L) \supset C_4 \oplus C_4$ in any quadratic extension L, but (-N,0) lifts in the extension $K(\sqrt{M-N})$.  Hence we have at most two quadratic extensions L of K such that $E(L) \approx  C_4 \oplus C_6$.  For example, the curve E(64,189) satisfies $E(K)_{tor} \approx C_2 \oplus C_6$ but $E(K(\sqrt{d}))_{tor} \approx C_2 \oplus C_{12}$ for $d=5,21$.

\end{proof}
\subsection{$ E(K)_{tor} \approx C_2 \oplus C_4$}

\begin{proof}

3)Now suppose $E(K) \approx C_2 \oplus C_{4}$.  
First suppose $K = \mathbb{Q}(\sqrt{-3}), L = K(i), M=u^2,N=v^2$ and $u^2-v^2 = w^2$ for some $u,v,w \in K$.
  By  \cref{full4torsion2}, we have $E(L) \supseteq C_4 \oplus C_{4}$ 
and we will argue that in this case there is no point of order 8.
Consider the following three points of order 4:
$$Q = (uv, uv(u+v))$$
$$Q' = (-u(u+w),iuw(u+w))$$
$$Q+Q' = (-v(v+iw), -vw(v+iw)) $$
Arguing as in \cite{Kwon}*{p.159} we find that $Q,Q'$ or $Q+Q'$ lifts.  
We will consider each case:
If Q lifts, then by \cref{lifting lemma} $uv, u(u+v), v(u+v)$ are squares in L.  As they lie in K, it follows that each is $\pm$ a square in K. If $uv= -c^2 $ note (-1,0,1) is not a solution to our problem as our curve is smooth. By \cref{circle}, WLOG we may use the parametrization $(u,v,w) = (1+m^2, 2m, 1-m^2)$ of $u^2 = v^2+w^2$ (note $m \ne 0, \pm 1$ as our curve is nonsingular) yielding $$-c^2 = (1+m^2)(2m)$$ And multiplying by 4 yields $$ (2c)^2 = (4+(-2m)^2)(-2m)$$

So we get a K-point on the elliptic curve $E_0$ with model:
$$ y^2 = x^3+4x $$ Magma tells us $E_0(K)_{tor} \approx C_4 $ and $E_0$ has rank 0, so the affine points of $E_0(K)$ are $(0,0), (2,\pm 4)$.  Hence  $-2m = 0, 2$ so $ m=0$ or $-1  $, a contradiction.  If $uv = c^2$ then one obtains the same curve $E_0$ as above, hence no nontrivial solutions.  If $Q'$ lifts then by \cref{lifting lemma} $-u(u+w), -uw, -2(u+w)$ are squares in $L$ and as above we conclude $-uw$ is $\pm$ a square in $K$.  If $-uw = -c^2$ then the parametrization above yields:

$$ c^2 = (1+m^2)(1-m^2) = 1-m^4$$  and multiplying by $m^2$ yields:

$$ (mc)^2 = m^2-m^6 = (m^2)-(m^2)^3 = (-m^2)^3 - (-m^2)$$ Letting $E_0$ denote the elliptic curve:

$$y^2 = x^3-x$$ Magma tells us $E_0(K)$ has  rank 0 and $E(K)_{tor} \approx  C_2 \oplus C_{2}$ so the only affine K-points are $(1,0),(0,0)$ and $(-1,0)$ .  Hence we get $-m^2=0, \pm 1$ so $m = 0, \pm 1$, a contradiction.
Finally, suppose $Q+Q'$ lifts.  By Thm \ref{lifting lemma}, it follows that 
-iwv is a square in L. 
$$-iwv = (a+bi)^2$$
$$ -iwv = a^2-b^2 + 2abi$$
From this it follows that $a=\pm b$ and so $wv = \pm 2a^2$.  Using the parametrization above yields $$\pm a^2 = m(1-m^2)$$

Let E denote the elliptic curve $$y^2 = x^3-x$$ We wish to classify the points on $E(K)$ and E$^{-1}(K)$ but note that $E^{-1} \approx$ E.  Magma tells us E(K) has rank 0 and $E(K)_{tor} \approx  C_2 \oplus C_{2}$ so the only affine K-points are (1,0),(0,0) and (-1,0) . In any case, we get $a=0$ which implies $w=0$ or $v=0$, yielding a contradiction.\\

Now suppose $K = \mathbb{Q}(\sqrt{-1}),M=u^2,N=v^2$, L = K(w) where $u^2-v^2 = w^2$ for some $u,v \in \mathcal{O}_K$ with 
[L:K]=2. 
 By \cref{full4torsion2}, we have $E(L) \supseteq C_4 \oplus C_{4}$ and so by  \cref{kwon} and  \cref{twist} 
$E(L) \approx C_4 \oplus C_{4}$ or $  C_4 \oplus C_{8}  $.

$$Q = (uv, uv(u+v))$$
$$Q' = (-u(u+w),iuw(u+w))$$
$$Q+Q' = (-v(v+iw), -vw(v+iw)) $$
Note $2Q = (0,0)$ and $2Q' = (-u^2,0)$ so $Q$, $Q'$ are independent order 4 points and hence form a basis for $ C_4 \oplus C_{4} \subseteq E(L)$. Now if some point of order 4 lifts in L, then as in the previous case we can say WLOG that $Q,Q'$ or $Q+Q'$ lifts.  Two cases can be dismissed quickly:

If $Q'$ lifts then by  \cref{lifting lemma} $-u(u+w) + u^2 = -uw$ is a square in L and if $Q+Q'$ lifts $-v(v+iw)+v^2 = -ivw$ is a square in L. In both cases, we find a square in L of the form $cw$ with $c \in K$ (note $i \in K$).  Now we make the following observation:  if $F \subset F'$ is a quadratic extension of fields and $F$ contains a square root of $-1$, then the (nonzero) squares in $F$ cannot be of the above form:  if $F' = F(w)$ then a typical element in $F'$ can be written $a+bw$ with $a,b \in F$.  So we find $(a+bw)^2 = a^2+b^2w^2 + 2abw$ will be of the form mentioned above only if $a^2+b^2w^2 = 0$. But in this case it follows that either $b=0$ (and hence $a=0$) or $w= \pm \frac{ia}{b} \in K$, a contradiction.  Therefore neither $Q'$ nor $Q+Q'$ lift over L.

If $Q$ lifts, then by  \cref{lifting lemma} $uv, u(u+v), v(u+v)$ are squares in L.  As they lie in K, it follows that each is a square in K or $u^2-v^2$ times a square in K. As they cannot all be squares in K, it follows that exactly one is a square in K and hence the other two have the same non-square part (as their product is  square) \cite{Kwon}*{p.160}.  Hence there is at most one extension in which $E(L) \approx C_4 \oplus C_{8}$.
We will prove that Q does not lift in L, so there are in fact no extensions in which $E(L) \approx C_4 \oplus C_{8}$. 
As described above, if $Q$ lifts then  one of the three sets are squares in K:

$$ (i) \space uv, u(u+v)(u^2-v^2), v(u+v)(u^2-v^2)    $$
$$ (ii) \space uv(u^2-v^2), u(u+v), v(u+v)(u^2-v^2)    $$
$$ (iii) \space uv(u^2-v^2) , u(u+v)(u^2-v^2), v(u+v)   $$  Modulo squares, this is equivalent to:
$$ (i) \space uv, u(u-v)   $$
$$ (ii) \space u(u+v), v(u-v)    $$
$$ (iii) \space  u(u-v), v(u+v)   $$ 

If (i) holds then by  \cref{lifting lemma} the point P of order 4 in E(K) with $P_x = -uv$ lifts in E(K) (a contradiction).  If (ii) holds then letting $z = \frac{u}{v}$ we obtain $$ a^2=z(z+1)  $$ $$b^2= z-1$$ and taking products yields $$(ab)^2 = z^3-z$$  Likewise if (iii) holds $$ a^2=z(z-1)  $$ $$b^2= z+1$$ and taking products yields $$(ab)^2 = z^3-z$$  Hence in either case we obtain a point on the elliptic curve $$y^2=x^3-x$$  But as noted in the case $E(K)_{tor} \approx C_2 \oplus C_8$, the only K-rational points $(x,y)$ on this curve satisfy $x=0, \pm 1, \pm i$.
If $\frac{u}{v} = 0, \pm 1$ then E is not smooth, a contradiction.  If $\frac{u}{v} =\pm i$ then $u^2=-v^2$.  But for any nonzero $v \in K$, the curve $E(v^2,-v^2)$ is isomorphic over K to $E(1,-1)$ and the latter curve does not have a point of order 8 in $K(\sqrt{2})$.

Now suppose $E(K) \approx C_2 \oplus C_{4}$ ($K = \mathbb{Q}(\sqrt{-1})$ or $\mathbb{Q}(\sqrt{-3})$) and the conditions in \cref{full4torsion2} do not hold (i.e. E(L) $\not \supseteq  C_4  \oplus C_{4}$).
Again by \cref{twist} (and  \cref{full4torsion2}) we have the only possibilities are $E(L) \approx C_2 \oplus C_{4}$, $C_4 \oplus C_{4}$, $C_2 \oplus C_{8}$ or $C_4 \oplus C_{8}$ and as noted, two of these cases (full 4-torsion) cannot occur.  So now we just need determine in which quadratic extensions of K a point of order 4 lifts.

WLOG assume $M=u^2$, $N=v^2$. 
 There are 4 order 4 points in E(K) (each a lift of (0,0)) and we can describe them explicitly:
$( uv, \pm uv(u+v)),( -uv, \pm uv(u-v))$.  Hence if P is an order 4 point in E(K) then $P_x = uv $ or $-uv$ and furthermore these cases cannot occur simultaneously (as if $E(L) \approx C_2 \oplus C_{8}$ then Im([2]) contains only 2 order 4 points, and it is closed under inverses).

If $P_x = uv$ and P lifts in L$=K(\sqrt{d})$ with d a nonsquare in $K$ then $uv, u(u+v), v(u+v)$ are squares in L, so each is a square in K or d times a square in K. Since their product is a square in K, and not all of them can be squares in K (since otherwise we have a point of order 8 in E(K)), it follows that exactly one of $uv, u(u+v), v(u+v)$ is a square in K.
If $uv$ is a square then  all three expressions are squares in $K(\sqrt{u(u+v)}) = K(\sqrt{v(u+v)})$. Likewise if $u(u+v)$ is a square in K then all three expressions are squares in $K(\sqrt{uv})$ (and similarly if $v(u+v)$ is a square in K).
If $P_x = -uv$ then one obtains the same results above but with v replaced by -v. Hence the growth $C_2 \oplus C_4$ to $C_2 \oplus C_8$ occurs in at most two quadratic extensions  (see \cite{Kwon}*{p.160} for a similar discussion).  If $K=\mathbb{Q}(i)$, then uv is a square iff -uv is a square. Hence if we choose uv to be a square and if $E(K)_{tor} \approx C_2 \oplus C_4$ then each order 4 points will lift in some extension.  When $u=1, v=4$ then $E(1,16)_{tor} \approx C_2 \oplus C_4 $ and $E(K(\sqrt{d}))_{tor} \approx C_2 \oplus C_8$ for $d=-3,5$.  We also have $E(K(\sqrt{-15}))_{tor} \approx C_4 \oplus C_4$ so $E(1,16)$ grows in 3 quadratic extensions.  If $K=\mathbb{Q}(\sqrt{-3})$, suppose $E(K)_{tor} \approx C_2 \oplus C_4$ and E obtains full 4-torsion in a quadratic extension.  By \cref{full4torsion2} there is a $w \in K$ such that $u^2-v^2 =w^2$.  Hence $(u+v)(u-v) = w^2$ so $u(u+v)$ is a square iff $u(u-v)$ is a square (and in this case each order 4 point lifts in some extension).  Consider the system
$$  a^2 = u(u+v) $$  $$ u^2-v^2 = w^2$$
Using the parametrization $(1+m^2, 2m, 1-m^2)$ for (u,v,w) (note we must have u,v,w nonzero above) yields
$$b^2 = 1+m^2$$
for some $b \in K$.  Using the parametrization $(b,m) = (\frac{1+n^2}{2n}, \frac{1-n^2}{2n})$ and letting say n=2 yields (after twisting) E(25,-24).  We have $E(K)_{tor} \approx C_2 \oplus C_4 $ and $E(K(\sqrt{d}))_{tor} \approx C_2 \oplus C_8$ for $d=\pm 6$.  We also have $E(K(i))_{tor} \approx C_4 \oplus C_4$ so $E(K)_{tor}$ grows in 3 quadratic extensions.

\end{proof}

\subsection{$E(K)_{tor} \approx C_4 \oplus C_4$}

\begin{proof}

5) If $E(K) \approx C_4 \oplus C_{4}$  where K is a quadratic extension then  we must have $K = \mathbb{Q}(i)$.  It follows from \cref{twist}
that  $E(L) \approx C_4 \oplus C_{4}$, $C_4 \oplus C_{8}$ or $C_8 \oplus C_{8}$.  WLOG we may write $ M=u^2,N=v^2$, $u^2-v^2 = w^2$ for some $u,v,w \in \mathcal{O}_K$.
$$Q = (uv, uv(u+v))$$
$$Q' = (-u(u+w),iuw(u+w))$$
$$Q+Q' = (-v(v+iw), -vw(v+iw)) $$
As above $Q$, $Q'$ are independent order 4 points and hence form a basis for $ C_4 \oplus C_{4} \subseteq E(L)$. Also, if some point of order 4 lifts in L then we can say WLOG that $Q,Q'$ or $Q+Q'$ lifts.

WLOG we may now assume the parametrization of $w^2+v^2=u^2$ of the form $(1-m^2, 2m, 1+m^2)$. Note in this case that $u+v = (1+m)^2$ and $w-iv = (im-1)^2$ are squares in K  and $u+w =2$.  First we claim that $uv$, $uw$ and $ivw$ are never squares in K.  If $uv =c^2$ for some $c \in K$ then:
$$c^2 = uv = (1+m^2)2m$$

And multiplying by 4 yields:
$$(2c)^2 = (4+(2m)^2)(2m)$$

So we get a K-point on the elliptic curve $E_0$:
$$y^2 = x^3+4x$$  Magma tells us $E_0$ has rank 0 and $E(K)_{tor} \approx  C_2 \oplus C_{4}$ so the only affine K-points are $(0,0),(\pm 2i,0), (2,\pm 4), (-2, \pm 4i)$.  Hence we get $2m=0, \pm 2, \pm2i$ so $m = 0, \pm 1, \pm i$, a contradiction.

If $uw=c^2$ for some $c \in K$, then $$c^2 = uw = (1+m^2)(1-m^2)= 1-m^4$$
And multiplying by $m^2$ yields:

$$(mc)^2 = (m^2) - (m^2)^3 = (-m^2)^3 -(-m^2)$$ So we get a K-point on the elliptic curve $E_0$:

$$y^2 = x^3-x$$  Magma tells us this curve has has rank 0 and $E(K)_{tor} \approx C_2 \oplus C_{4}$  and list of 8 points shows that the x-coordinates of points in $E(K)$ are $0,\pm1, \pm i$. Hence m satisfies $-m^2 = 0,\pm1, \pm i$ so $m=0, \pm1,\pm i$, a contradiction (as our curve E is nonsingular by hypothesis).  

If $ivw = c^2$ for some $c \in K$ then $$ c^2 = 2im(1-m^2)$$ and multiplying by -4 yields

$$ (2ic)^2 = (2im)(-4+(2im)^2)$$

So we get a K-point on the elliptic curve $E_0$:

$$y^2 = x^3-4x$$

Magma tells us this curve has has rank 0 and $E(K)_{tor} \approx C_2 \oplus C_{2}$  and list of 4 points shows that the x-coordinates of points in $E(K)$ are $0,\pm 2$.  Hence m satisfies $2im = 0, \pm 2$ so $m = 0, \pm i$, a contradiction (as E nonsingular by hypothesis).\\

Now if $Q$ lifts (in some extension) it follows that exactly one of $uv, u(u+v), v(u+v)$ is a square in K and hence one of $uv, u, v$ is a square in K (as $u+v$ is a square). If $Q'$ lifts in some extension then exactly one of $uw, 2u, 2w$ is a square in K. Finally if $Q''$ lifts in some extension then exactly one of $ivw, w, iv$ is a square in K.  We have already argued that $uv, uw$ and $ivw$ are never squares in K.

Suppose that $Q$ lifts in some extension and $Q'$ lifts in some (possibly different) extension.  The only way this can occur is if one of the following pairs consists of squares in K: (i) $u$ and $2u$ (ii) $u$ and $2w$ (iii) $v$ and $2u$ or (iv) $v$ and $2w$.  In case (i) we find 2 is a square in K, a contradiction.  
In case (ii), we get K-solutions to the system 
$$a^2 = 1+m^2$$
$$ b^2 = 2(1-m^2)$$
Taking products yields:
$$(ab)^2 = 2(1-m^4)$$
Multiplying both sides by $4m^2$:
 $$(2mab)^2 = 8(m^2-m^6)$$
 $$(2mab)^2 = (-2m^2)^3-4(-2m^2)$$
 So we get a point on the curve $$y^2 = x^3-4x$$
 
Magma tells us this curve has has rank 0 and $E(K)_{tor} \approx C_2 \oplus C_{2}$  and list of 4 points shows that the x-coordinates of points in $E(K)$ are $0,\pm 2$.  Hence m satisfies $-2m^2 = 0, \pm 2$ so $-m^2 = 0, \pm 1$ and $m = 0, \pm 1, \pm i$. But then $m=0,\pm 1, \pm i$ and each such choice for m yields $u,v$ or $w=0$ (contradiction).

 In case (iii), if v and 2u are squares  
 $$a^2 = 2m$$
$$ b^2 = 2(1-m^2)$$
Taking products yields:
$$(ab)^2 = 4(m-m^3)$$
 $$(ab/2)^2 = (-m)^3-(-m)$$ 
 So we get a point on the curve 
 $$y^2 = x^3-x$$
 Magma tells us this curve has has rank 0 and $E(K)_{tor} \approx C_2 \oplus C_{4}$  and list of 8 points shows that the x-coordinates of points in $E(K)$ are $0,\pm 1, \pm i$. As above we reach a contradiction.

 Finally in case (iv) if $v$ and $2u$ are squares  
 $$a^2 = 2m$$
$$ b^2 = 2(1+m^2)$$
Taking products yields:
$$(ab)^2 = 4(m+m^3)$$
 $$(ab/2)^2 = m^3+m$$ 
 So we get a point on the curve 
 $$y^2 = x^3+x$$
 Magma tells us this curve has has rank 0 and $E(K)_{tor} \approx C_2 \oplus C_{2}$  and list of 4 points shows that the x-coordinates of points in $E(K)$ are $0, \pm i$. As above we reach a contradiction.
Therefore if $Q$ lifts in some extension then $Q'$ does not (and vice versa).  In particular, as both $Q$ and $Q'$ would lift in an extension L such that $E(L)_{tor} \approx C_8 \oplus C_{8}$ (as any two order 4 points would lift)
 we conclude that there is no quadratic extension L of K such that  $E(L)_{tor} \approx C_8 \oplus C_{8}$.

If $Q$ and $Q''$ both lift in some (possibly different) extension(s), then one of these pairs consists of squares in $K:$ (i) $u,w$ (ii) $u, iv$ (iii) $v,w$ (iv) $v, iv$.  Each of the first three cases contradict \cref{Fermat} and the last case can't hold since $i$ has no square root in K.

Finally if $Q'$ and $Q''$ both lift then one of these pairs consists of squares in K: (i) $w, 2u$ (ii) $w, 2w$ (iii) $2u, iv$ (iv) $2w, iv$.  Cases (i), (ii) have been addressed above.  In case (iii) we obtain $$c^2 = 4im(1+m^2)$$
and multiplying by $-1$ yields
$$(ic/2)^2 = (im)^3-(im)$$
So we get a point on the elliptic curve 
$$ y^2 = x^3-x$$
As noted above, the x-coordinates of points in $E(K)$ are $0, \pm 1, \pm i$
so m satisfies $im=0, \pm 1$ or $\pm i$.  Hence $m =0, \pm 1$, or $\pm i$ which leads to u,v or $w=0$ (a contradiction).

In case (iv) we obtain $$c^2 = 4im(1-m^2)$$

$$(c/2)^2 = (im)^3+(im)$$
So we get a point on the elliptic curve 
$$ y^2 = x^3+x$$
As noted above, the x-coordinates of points in $E(K)$ are $0, \pm i$
so m satisfies $im=0$ or $\pm i$.  Hence $m =0, \pm 1$, which leads to u,v or $w=0$.
We conclude that there is at most one quadratic extension L of K in which $E_{tor}$ grows and in this case $E(L) = C_4 \oplus C_{8}$.\\

\end{proof}

\subsection{$E(K)_{tor} \approx C_2 \oplus C_2$}

\begin{proof}
4)Finally, suppose $E(K) \approx C_2 \oplus C_{2}$.  Fix $d \in K$, d a nonsquare.  We wish to study $E(K(\sqrt{d}))$.  But as E is isomorphic to $E^d$ over $K(\sqrt{d})$,  we have $E(K(\sqrt{d})) \approx E^d(K(\sqrt{d}))$ so it suffices to study the growth of $E^d(K)$ over $K(\sqrt{d})$ \cite{Kwon}*{p.161}.  If $E^d(K)$  is not isomorphic to $C_2 \oplus C_2$ then we may apply a previously proven case.  On the other hand, if $E^d(K) \approx C_2 \oplus C_2$ then it follows that $E(K(\sqrt{d})) \approx C_2 \oplus C_2, C_2 \oplus C_4$ or $C_4 \oplus C_4$.

Now we claim that if $K=\mathbb{Q}(\sqrt{d})$ ($d=-1,3$) and $E(K) \approx C_2 \oplus C_{2}$ then $E(K)_{tor}$ grows in at most 3 quadratic extensions.  Note that E(K)$[2^{\infty}]$ grows in a quadratic extension L if and only if one of the three points (0,0), (-M,0), (-N,0) lifts in L.  But the point (0,0) lifts in at most one quadratic extension, namely K$(\sqrt{M}, \sqrt{N})$ (when it is quadratic over K) by \cref{lifting lemma}.  Similarly, (-M,0) (resp. (-N,0)) can only lift in the  extension K($\sqrt{-M},\sqrt{N-M}$)  (resp. K($\sqrt{-N},\sqrt{M-N}$).  Hence E(K)$[2^{\infty}]$ grows in at most 3 extensions.  On the other hand, by \cref{oddpartsadd}, the number of quadratic extensions where E(K)[$\overline{2}$] grows equals the number of d-quadratic twists (d defined up to a square) with  $E^d(K)$[$\overline{2}$] nontrivial.  But by \cref{twist} there can be at most one such twist (namely $C_2 \oplus C_6$, so E(K)[$\overline{2}$] grows in at most one extension.  

Now if E(K)$[2^{\infty}]$ grows in 3 quadratic extensions then K$(\sqrt{M}, \sqrt{N})$ is quadratic over K, so either (i) $M=ds^2, N=dt^2$ for some $s,t,d \in K$, d a nonsquare or (ii) (WLOG) $M=s^2$ for some $s \in K$ and N is a nonsquare.  If (i) holds, then $E^d(K)$ has a point of order 4 by \cref{lifting lemma} so E cannot have a twist with a point of order 3 by \cref{twist} and hence E(K)[$\overline{2}$] cannot grow in a quadratic extension by \cref{oddpartsadd}. Now if -M is a nonsquare in K (i.e. d $ \not = -1$ modulo squares) then (-M,0) must lift in K($\sqrt{-M}$).  But in this case -N is not a square as well, so (-N,0) must lift in K($\sqrt{-N}$) = K($\sqrt{-M}$) (a contradiction, as we assume the three order two points lift in pairwise different extensions).  Hence it must be the case that -M is a square, so WLOG d=-1.  As this is a nonsquare in K by assumption, this cannot happen over K= Q(i).  On the other hand, over K($\sqrt{-3}$) let E: $y^2=x(x-1)(x-9)$.  Then $E(K)[2^{\infty}]$ grows in K(i), K$(\sqrt{-2})$ and  K$(\sqrt{2})$.

If (ii) holds then as N is a nonsquare in K, (0,0) lifts in K($\sqrt{N}$). But if K$=\mathbb{Q}(i)$, -N is a nonsquare as well so (-N,0) lifts in K($\sqrt{-N}$) = K($\sqrt{N}$) (a contradiction).  If K($\sqrt{-3}$) then -M is not a square so (-M,0) lifts in K($\sqrt{-M}$)=K($\sqrt{-1}$). But then by \cref{lifting lemma} $N-M = \pm  v^2 $ for some v $\in$ K.  But if $N-M = v^2 $ then $M-N = -v^2 $ so (-N,0) must lift in K($\sqrt{M-N}$) = K($\sqrt{-1}$) as well (a contradiction, as we assumed the points lift in different extensions).  Hence $N-M=-v^2$.  As noted in the proof of \cref{twist}, for all nonzero d $\in$ K we have $E^d(M,N) \approx E^d(-M, N-M) = E^d(-a^2,-v^2)$ and by case (i), the latter curve has a no twist with a point of odd order greater than 1.

In conclusion, if $K=\mathbb{Q}(i)$ then $E(K)[2^{\infty}]$ never grows in 3 distinct extensions.  The curve $E(5^2,5 \cdot 32)$ grows in
K$(\sqrt{5})$,K$(\sqrt{10})$,K$(\sqrt{15})$.If $K=\mathbb{Q}(\sqrt{-3})$ then $E(K)[2^{\infty}]$ can grow in 3 different extensions but for such a curve $E(K)[\overline{2}]$ will not grow in any extension.  The curve E(-1,-9) grows in
K$(\sqrt{2})$,K$(\sqrt{-2})$,K$(\sqrt{-1})$.

\end{proof}

\textbf{Acknowledgements}.

We thank Sheldon Kamienny for his helpful discussions and support.



\begin{bibdiv}
  \begin{biblist}
  
  \bib{Daniels}{article}
  {
   author={Daniels, Harris},
   author={Lozano-Robledo, Alvaro},
   author={Najman, Filip},
   author={Sutherland, Andrew},
   title={Torsion subgroups of rational elliptic curves over the compositum of all cubic fields},
   
   journal={arXiv:1509.00528},
   volume={},
   date={2015},
   number={},
   pages={},
   issn={},
   review={},
   doi={},
}

  \bib{Fujita}{article}{
   author={Fujita, Yasutsugu},
   title={Torsion subgroups of elliptic curves in elementary abelian
   2-extensions of $\mathbb{Q}$},
   journal={J. Number Theory},
   volume={114},
   date={2005},
   number={1},
   pages={124--134},
   issn={0022-314X},
   review={\MR{2163908 (2006h:11055)}},
   doi={10.1016/j.jnt.2005.03.005}
}

\bib{Tornero}{article}
{
year={2014},
issn={1578-7303},
journal={Revista de la Real Academia de Ciencias Exactas, Fisicas y Naturales. Serie A. Matematicas},
volume={108},
number={2},
doi={10.1007/s13398-013-0152-4},
title={Torsion of rational elliptic curves over quadratic fields},
url={http://dx.doi.org/10.1007/s13398-013-0152-4},
publisher={Springer Milan},
author={Gonzalez-Jimenez, Enrique},
author={ Tornero, Jose M.},
pages={923-934},
language={English}
}

\bib{Jeon04}{article}{
   author={Jeon, Daeyeol},
   author={Kim, Chang Heon},
   author={Schweizer, Andreas},
   title={On the torsion of elliptic curves over cubic number fields},
   journal={Acta Arith.},
   volume={113},
   date={2004},
   number={3},
   pages={291--301},
   issn={0065-1036},
   review={\MR{2069117 (2005f:11112)}},
   doi={10.4064/aa113-3-6},
}

\bib{Jeon06}{article}{
   author={Jeon, Daeyeol},
   author={Kim, Chang Heon},
   author={Park, Euisung},
   title={On the torsion of elliptic curves over quartic number fields},
   journal={J. London Math. Soc. (2)},
   volume={74},
   date={2006},
   number={1},
   pages={1--12},
   issn={0024-6107},
   review={\MR{2254548 (2007m:11079)}},
   doi={10.1112/S0024610706022940},
}

\bib{Kamienny}{article}
 {
    AUTHOR = {Kamienny, S.},
     TITLE = {Torsion points on elliptic curves and {$q$}-coefficients of
              modular forms},
   JOURNAL = {Invent. Math.},
    VOLUME = {109},
      YEAR = {1992},
    NUMBER = {2},
     PAGES = {221--229},
      ISSN = {0020-9910},
       DOI = {10.1007/BF01232025},
       URL = {http://dx.doi.org/10.1007/BF01232025},
}

\bib{Kamienny&Najman}{article}
 {
    AUTHOR = {Kamienny, Sheldon},
    AUTHOR = { Najman, Filip},
     TITLE = {Torsion groups of elliptic curves over quadratic fields},
   JOURNAL = {Acta Arith.},
    VOLUME = {152},
      YEAR = {2012},
    NUMBER = {3},
     PAGES = {291--305},
      ISSN = {0065-1036},
       DOI = {10.4064/aa152-3-5},
       URL = {http://dx.doi.org/10.4064/aa152-3-5},
}

\bib{Kenku&Momose}{article}
 {
    AUTHOR = {Kenku, M. A. },
    AUTHOR = {Momose, F.},
     TITLE = {Torsion points on elliptic curves defined over quadratic
              fields},
   JOURNAL = {Nagoya Math. J.},
    VOLUME = {109},
      YEAR = {1988},
     PAGES = {125--149},
      ISSN = {0027-7630},
       URL = {http://projecteuclid.org/euclid.nmj/1118780896},
}

\bib{Knapp}{book}
{
   author={Knapp, Anthony W.},
   title={Elliptic curves},
   series={Mathematical Notes},
   volume={40},
   publisher={Princeton University Press, Princeton, NJ},
   date={1992},
   pages={xvi+427},
   isbn={0-691-08559-5},
   review={\MR{1193029 (93j:11032)}},
}

\bib{Kubert76}{article}
{
    AUTHOR = {Kubert, Daniel Sion},
     TITLE = {Universal bounds on the torsion of elliptic curves},
   JOURNAL = {Proc. London Math. Soc. (3)},
    VOLUME = {33},
      YEAR = {1976},
    NUMBER = {2},
     PAGES = {193--237},
      ISSN = {0024-6115},
}

\bib{Kwon}{article}
{
    AUTHOR = {Kwon, Soonhak},
     TITLE = {Torsion subgroups of elliptic curves over quadratic
              extensions},
   JOURNAL = {J. Number Theory},
  FJOURNAL = {Journal of Number Theory},
    VOLUME = {62},
      YEAR = {1997},
    NUMBER = {1},
     PAGES = {144--162},
      ISSN = {0022-314X},
     CODEN = {JNUTA9},
       DOI = {10.1006/jnth.1997.2036},
       URL = {http://dx.doi.org/10.1006/jnth.1997.2036},
}

\bib{Kubert79}{article}
{
    AUTHOR = {Kubert, Daniel Sion},
     TITLE = {Universal bounds on the torsion of elliptic curves},
   JOURNAL = {Compositio Math.},
  FJOURNAL = {Compositio Mathematica},
    VOLUME = {38},
      YEAR = {1979},
    NUMBER = {1},
     PAGES = {121--128},
      ISSN = {0010-437X},
     CODEN = {CMPMAF},
       URL = {http://www.numdam.org/item?id=CM_1979__38_1_121_0},
}

\bib{Laska&Lorenz}{article}
{
    AUTHOR = {Laska, Michael},
    AUTHOR = {Lorenz, Martin},
     TITLE = {Rational points on elliptic curves over {${\bf Q}$} in
              elementary abelian {$2$}-extensions of {${\bf Q}$}},
   JOURNAL = {J. Reine Angew. Math.},
  FJOURNAL = {Journal f\"ur die Reine und Angewandte Mathematik},
    VOLUME = {355},
      YEAR = {1985},
     PAGES = {163--172},
      ISSN = {0075-4102},
     CODEN = {JRMAA8},
       DOI = {10.1515/crll.1985.355.163},
       URL = {http://dx.doi.org/10.1515/crll.1985.355.163},
}

\bib{Mazur78}{article}
 {
    AUTHOR = {Mazur, B.},
     TITLE = {Rational isogenies of prime degree (with an appendix by {D}.
              {G}oldfeld)},
   JOURNAL = {Invent. Math.},
  FJOURNAL = {Inventiones Mathematicae},
    VOLUME = {44},
      YEAR = {1978},
    NUMBER = {2},
     PAGES = {129--162},
      ISSN = {0020-9910},
     CODEN = {INVMBH},
       DOI = {10.1007/BF01390348},
       URL = {http://dx.doi.org/10.1007/BF01390348},
}

\bib{Merel}{article}{
   author={Merel, Lo{\"{\i}}c},
   title={Bornes pour la torsion des courbes elliptiques sur les corps de
   nombres},
   language={French},
   journal={Invent. Math.},
   volume={124},
   date={1996},
   number={1-3},
   pages={437--449},
   issn={0020-9910},
   review={\MR{1369424 (96i:11057)}},
   doi={10.1007/s002220050059},
}

\bib{NajCyc2}{article}
 {
    AUTHOR = {Najman, Filip},
     TITLE = {Complete classification of torsion of elliptic curves over
              quadratic cyclotomic fields},
   JOURNAL = {J. Number Theory},
    VOLUME = {130},
      YEAR = {2010},
    NUMBER = {9},
     PAGES = {1964--1968},
      ISSN = {0022-314X},
       DOI = {10.1016/j.jnt.2009.12.008},
       URL = {http://dx.doi.org/10.1016/j.jnt.2009.12.008},
}

\bib{NajCyc}{article}
 {
    AUTHOR = {Najman, Filip},
     TITLE = {Torsion of elliptic curves over quadratic cyclotomic fields},
   JOURNAL = {Math. J. Okayama Univ.},
    VOLUME = {53},
      YEAR = {2011},
     PAGES = {75--82},
      ISSN = {0030-1566},
}

\bib{NajCubic}{article}
{
    AUTHOR = {Najman, Filip},
     TITLE = {Torsion of elliptic curves over cubic fields},
   JOURNAL = {J. Number Theory},
  FJOURNAL = {Journal of Number Theory},
    VOLUME = {132},
      YEAR = {2012},
    NUMBER = {1},
     PAGES = {26--36},
      ISSN = {0022-314X},
     CODEN = {JNUTA9},
   MRCLASS = {11G05 (11R16)},
  MRNUMBER = {2843296 (2012j:11122)},
MRREVIEWER = {{\'A}lvaro Lozano-Robledo},
       DOI = {10.1016/j.jnt.2011.06.013},
       URL = {http://dx.doi.org/10.1016/j.jnt.2011.06.013},
}

\bib{NajLarge}{article}
{
year={2014},
issn={1578-7303},
journal={Revista de la Real Academia de Ciencias Exactas, Fisicas y Naturales. Serie A. Matematicas},
doi={10.1007/s13398-014-0199-x},
title={The number of twists with large torsion of an elliptic curve},
url={http://dx.doi.org/10.1007/s13398-014-0199-x},
publisher={Springer Milan},
keywords={Elliptic curves; Torsion subgroups; Twists; Quadratic fields; 11G05},
author={Najman, Filip},
pages={1-13},
language={English}
}

\bib{Ono}{article}{
   author={Ono, Ken},
   title={Euler's concordant forms},
   journal={Acta Arith.},
   volume={78},
   date={1996},
   number={2},
   pages={101--123},
   issn={0065-1036},
   review={\MR{1424534 (98c:11051)}},
}

 \bib{Parent00}{article}
{
    author = {Parent, Pierre},
    title = {Torsion des courbes elliptiques sur les corps cubiques},
    journal = {Ann. Inst. Fourier},
    year = {2000},
    pages = {723--749}
}

\bib{Parent03}{article}
{
abstract = {We complete our previous determination of the torsion primes of elliptic curves over cubic number fields, by showing that $17$ is not one of those. },
author = {Parent, Pierre},
journal = {Journal de théorie des nombres de Bordeaux},
keywords = {elliptic curve; rational point; modular curve},
language = {eng},
number = {3},
pages = {831-838},
publisher = {Université Bordeaux I},
title = {No $17$-torsion on elliptic curves over cubic number fields},
url = {http://eudml.org/doc/249113},
volume = {15},
year = {2003},
}

  \bib{Si2}{book}
  {
    AUTHOR = {Silverman, Joseph H.},
     TITLE = {Advanced topics in the arithmetic of elliptic curves},
    SERIES = {Graduate Texts in Mathematics},
    VOLUME = {151},
 PUBLISHER = {Springer-Verlag, New York},
      YEAR = {1994},
     PAGES = {xiv+525},
      ISBN = {0-387-94328-5},
       DOI = {10.1007/978-1-4612-0851-8},
       URL = {http://dx.doi.org/10.1007/978-1-4612-0851-8},
}

\bib{Si}{book}
 {
    AUTHOR = {Silverman, Joseph H.},
     TITLE = {The arithmetic of elliptic curves},
    SERIES = {Graduate Texts in Mathematics},
    VOLUME = {106},
   EDITION = {Second},
 PUBLISHER = {Springer, Dordrecht},
      YEAR = {2009},
     PAGES = {xx+513},
      ISBN = {978-0-387-09493-9},
   MRCLASS = {11-02 (11G05 11G20 14H52 14K15)},
  MRNUMBER = {2514094 (2010i:11005)},
MRREVIEWER = {Vasil{\cprime} {\=I}. Andr{\={\i}}{\u\i}chuk},
       DOI = {10.1007/978-0-387-09494-6},
       URL = {http://dx.doi.org/10.1007/978-0-387-09494-6}

}

\bib{Stoll}{unpublished}
{
title= {Torsion points on elliptic curves over quartic number fields},
author = {Stoll, Michael},
year = {2010},
note= {Algorithmic Number Theory Symposium
— ANTS IX, 2010},
URL= {http://www.mathe2.uni-bayreuth.de/stoll/
talks/ANTS2010-1-EllTorsion.pdf},
}

\bib{Rabarison}{article}
	{
    AUTHOR = {Rabarison, F. Patrick},
     TITLE = {Structure de torsion des courbes elliptiques sur les corps
              quadratiques},
   JOURNAL = {Acta Arith.},
  FJOURNAL = {Acta Arithmetica},
    VOLUME = {144},
      YEAR = {2010},
    NUMBER = {1},
     PAGES = {17--52},
      ISSN = {0065-1036},
     CODEN = {AARIA9},
   MRCLASS = {11G05 (11G18 11Y50)},
  MRNUMBER = {2660554 (2011g:11112)},
MRREVIEWER = {Filippo Alberto Edoardo Nuccio},
       DOI = {10.4064/aa144-1-3},
       URL = {http://dx.doi.org/10.4064/aa144-1-3},
}

\bib{Reichert}{article}{
   author={Reichert, Markus A.},
   title={Explicit determination of nontrivial torsion structures of
   elliptic curves over quadratic number fields},
   journal={Math. Comp.},
   volume={46},
   date={1986},
   number={174},
   pages={637--658},
   issn={0025-5718},
   review={\MR{829635 (87f:11039)}},
   doi={10.2307/2008003},
}

\bib{Washington}{book}{
   author={Washington, Lawrence C.},
   title={Elliptic curves},
   series={Discrete Mathematics and its Applications (Boca Raton)},
   edition={2},
   note={Number theory and cryptography},
   publisher={Chapman \& Hall/CRC, Boca Raton, FL},
   date={2008},
   pages={xviii+513},
   isbn={978-1-4200-7146-7},
   isbn={1-4200-7146-7},
   review={\MR{2404461 (2009b:11101)}},
   doi={10.1201/9781420071474},
}

\end{biblist}
\end{bibdiv}

\end{document}